\documentclass[a4paper, 12pt, oneside, reqno, notitlepage]{amsart}
\usepackage[margin=1in]{geometry}
\usepackage{amsmath,amssymb,amsthm,graphicx,mathrsfs,bbm,url}
\usepackage{amsthm}
\usepackage{wrapfig}
\usepackage{enumitem}
\usepackage{mathtools}
\usepackage[utf8]{inputenc}
%% Permet la coupure des mots accentués.
 \usepackage[T1]{fontenc}
\usepackage[usenames,dvipsnames]{color}
\usepackage[colorlinks=true,linkcolor=Red,citecolor=Green]{hyperref}
\usepackage[super]{nth}
\usepackage[open, openlevel=2, depth=3, atend]{bookmark}
\hypersetup{pdfstartview=XYZ}
\usepackage[font=footnotesize]{caption}
\usepackage[all]{xy}
\usepackage{caption}
\usepackage{subcaption}
\usepackage{hyperref}
\usepackage{float}
\usepackage{mathtools}

\usepackage{tikz}
\usetikzlibrary{decorations.markings,decorations.pathreplacing,calligraphy}

\usepackage[normalem]{ulem}

%% Pour un article écrit en français, on appréciera souvent les
%% améliorations apportées par les extensions latex suivantes :
%% Babel pour une francisation des termes et une meilleure gestion
%% typographique du français

\usepackage{epstopdf}

\theoremstyle{plain}
\newtheorem{theorem}{Theorem}[section]
\newtheorem*{theorem*}{Theorem}

\newtheorem{lemma}[theorem]{Lemma}
\newtheorem{claim}[theorem]{Claim}
\newtheorem{proposition}[theorem]{Proposition}
\newtheorem{corollary}[theorem]{Corollary}

\theoremstyle{definition}
\newtheorem{definition}[theorem]{Definition}

\theoremstyle{remark}

%\addto\captionsfrench{\renewcommand\proofname{Proof}}

\numberwithin{equation}{section}

\newcommand{\C}{\mathbb{C}}
\newcommand{\R}{\mathbb{R}}

\newcommand{\N}{\mathbb{N}}

\newcommand{\eps}{\varepsilon}

\newcommand{\mc}{\mathcal}

\newcommand{\dd}{\mathrm{d}}

\DeclareMathOperator{\vol}{vol}

\DeclareMathOperator{\Tr}{Tr}

\newcommand{\be}{\begin{equation}}
\newcommand{\ee}{\end{equation}}

\title
%% L'argument optionnel donne la version courte pour les entêtes.
[Marked boundary rigidity for surfaces of Anosov type]
%% L'argument obligatoire est imprimé sur la première page, dans les
%% sommaires, entêtes si la version courte n'est pas spécifiée.
{Marked boundary rigidity for surfaces\\ of Anosov type}

\author{Alena Erchenko}

\address{Department of Mathematics, Dartmouth College, Hanover, NH, USA}

\email{alena.erchenko@dartmouth.edu}

\author{Thibault Lefeuvre}

\address{Université de Paris and Sorbonne Université, CNRS, IMJ-PRG, F-75006 Paris, France.}

\email{tlefeuvre@imj-prg.fr}

\begin{document}

\begin{abstract}
Let $\Sigma$ be a smooth compact connected oriented surface with boundary. A metric on $\Sigma$ is said to be of \emph{Anosov type} if it has strictly convex boundary, no conjugate points, and a hyperbolic trapped set. We prove that two metrics of Anosov type with the same marked boundary distance are isometric (via a boundary-preserving isometry isotopic to the identity). As a corollary, we retrieve the boundary distance rigidity result for simple disks of Pestov and Uhlmann \cite{Pestov-Uhlmann-05}. The proof rests on a new transfer principle showing that, in any dimension, the marked length spectrum rigidity conjecture implies the marked boundary distance rigidity conjecture under the existence of a suitable isometric embedding into a closed Anosov manifold. Such an isometric embedding result for open surfaces of Anosov type was proved by the first author with Chen and Gogolev in \cite{Chen-Erchenko-Gogolev-20} while the marked length spectrum rigidity for closed Anosov surfaces was established by the second author with Guillarmou and Paternain in \cite{Guillarmou-Lefeuvre-Paternain-23}.
\end{abstract}

\maketitle

\section{Introduction}

\subsection{Marked boundary distance rigidity}

Let $\Sigma$ be a smooth compact connected oriented Riemannian manifold with boundary. A metric $g$ on $\Sigma$ is said to be of \emph{Anosov type} if the boundary $\partial \Sigma$ is strictly convex, the metric has no conjugate points and the set of trapped geodesics is hyperbolic (see Definition \ref{definition:anosov} for further details). Given $x,y \in \partial \Sigma$, we denote by $\mc{C}_{x,y}$ the set of all homotopy classes of curves with fixed endpoints $x$ and $y$. The Anosov assumption guarantees that for every $x,y \in \partial \Sigma$, for every homotopy class of curves $c \in \mc{C}_{x,y}$, there exists a unique $g$-geodesic $\gamma_{x,y}(c)$ joining $x$ to $y$ (see \cite[Lemma 2.2]{Guillarmou-Mazzucchelli-18} for instance). We set $\mc{P} := \left\{ (x,y,c) ~|~ x,y \in \partial \Sigma, c \in \mc{C}_{x,y} \right\}$. The \emph{marked boundary distance function} is then defined as
\begin{equation}
\label{equation:mbd}
d_g : \mc{P} \to [0,\infty), \qquad d_g(x,y,c) := \ell_{g}(\gamma_{x,y}(c)).
\end{equation}
A folklore conjecture is that the marked boundary distance function should determine the metric up to isometry in this class, see \cite[Conjecture 1.6]{Cekic-Guillarmou-Lefeuvre-22} for instance. In the specific case where $\Sigma$ is diffeomorphic to a ball, this is known as the \emph{boundary rigidity conjecture}, see \cite{Michel-81, Croke-91}. In this article we prove the marked boundary distance conjecture for surfaces:

%The purpose of this article is to establish this rigidity result for surfaces of Anosov type:

\begin{theorem}
\label{theorem:main}
Let $\Sigma$ be a smooth compact connected oriented surface with boundary. Let $g_1, g_2$ be two metrics of Anosov type on $\Sigma$. If $g_1$ and $g_2$ have same marked boundary distance function, that is $d_{g_1} = d_{g_2}$, then there exists a smooth diffeomorphism $\phi \in \mathrm{Diff}_0(\Sigma,\partial \Sigma)$ such that $g_1 = \phi^*g_2$.
\end{theorem}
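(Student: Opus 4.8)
The plan is to reduce Theorem~\ref{theorem:main} to the marked length spectrum rigidity of closed Anosov surfaces \cite{Guillarmou-Lefeuvre-Paternain-23}, through a transfer principle fed by the isometric embedding of surfaces of Anosov type into closed Anosov surfaces \cite{Chen-Erchenko-Gogolev-20}. \emph{Step 1 (common closed extension).} The equality $d_{g_1}=d_{g_2}$ forces $g_1$ and $g_2$ to have the same infinite jet along $\partial\Sigma$, up to a diffeomorphism fixing $\partial\Sigma$ to infinite order; this is the standard boundary-determination argument, which uses only short geodesics near the strictly convex boundary and leaves $d_{g_i}$ unchanged. After this harmless normalization I may assume $g_1$ and $g_2$ agree to infinite order on $\partial\Sigma$. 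Applying \cite{Chen-Erchenko-Gogolev-20} to $g_1$ yields a closed Anosov surface $(M,\hat g_1)$ isometrically containing $(\Sigma,g_1)$; setting $C:=M\setminus\mathrm{int}\,\Sigma$ and gluing $(\Sigma,g_2)$ to $(C,\hat g_1|_C)$ produces a smooth metric $\hat g_2$ on $M$ with $\hat g_2|_C=\hat g_1|_C$ and $\hat g_2|_\Sigma=g_2$; one must verify, using the robustness of the construction of \cite{Chen-Erchenko-Gogolev-20} when the metric on $\Sigma$ is varied within the Anosov-type class, that $\hat g_2$ is again of Anosov type, so that $(M,\hat g_2)$ is a closed Anosov surface.

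\emph{Step 2 (the transfer principle).} I would then show $\hat g_1$ and $\hat g_2$ have the same marked length spectrum on $M$. Strict convexity of $\partial\Sigma$ prevents any $\hat g_i$-geodesic from being tangent to, or containing an arc of, $\partial\Sigma$; hence the length-minimizing closed geodesic representative $\gamma_i$ of a free homotopy class $[c]$ whose geodesic meets $\partial\Sigma$ crosses it in a finite set and decomposes into a cyclic \emph{necklace} of boundary-to-boundary arcs, alternately contained in $\Sigma$ and in $C$. An arc in $\Sigma$ is, by absence of conjugate points and strict convexity, the unique $g_i$-geodesic in its relative homotopy class, so its length is the corresponding value of $d_{g_i}$; an arc in $C$ has length at least the infimum $D$ of lengths of curves in $C$ in its relative homotopy class, and $D$ depends only on $\hat g_1|_C=\hat g_2|_C$. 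Writing $L_i(\theta)$ for the sum of the relevant $d_{g_i}$- and $D$-values over a necklace $\theta$ in class $[c]$, concatenation of near-minimizers gives $\ell_{\hat g_i}([c])\le L_i(\theta)$ for every $\theta$, while the decomposition of $\gamma_i$ produces a necklace $\theta_i$ with $L_i(\theta_i)\le\ell_{\hat g_i}([c])$; hence $\ell_{\hat g_i}([c])=\inf_\theta L_i(\theta)$. Since the set of necklaces in class $[c]$ is purely topological, $d_{g_1}\equiv d_{g_2}$, and $D$ is common, we get $L_1\equiv L_2$ and therefore $\ell_{\hat g_1}([c])=\ell_{\hat g_2}([c])$. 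Classes whose $\hat g_i$-geodesic stays in $\mathrm{int}\,\Sigma$ (e.g.\ those in the trapped set) are reached by a limiting trick: fixing $\delta\in\pi_1(M)$ not carried by $\pi_1(\Sigma)$, the class $[\gamma^k\delta]$ does meet $\partial\Sigma$ and is thus covered above, while $\ell_{\hat g_i}([\gamma])=\lim_{k\to\infty}k^{-1}\ell_{\hat g_i}([\gamma^k\delta])$ by the estimate $\ell_{\hat g_i}([\gamma^k\delta])=k\,\ell_{\hat g_i}([\gamma])+O(1)$ valid in the (Gromov-)hyperbolic universal cover.

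\emph{Step 3 (closed rigidity and descent).} By \cite{Guillarmou-Lefeuvre-Paternain-23} there is $\psi\in\mathrm{Diff}_0(M)$ with $\hat g_1=\psi^*\hat g_2$. To descend to $\Sigma$, observe that $\hat g_1=\hat g_2$ on $C$ makes $\psi$ a local isometry $(\mathrm{int}\,C,\hat g_2)\to(M,\hat g_2)$; as a local isometry between connected Riemannian manifolds is determined by its $1$-jet at a point, it suffices to produce $p\in\mathrm{int}\,C$ with $\psi(p)=p$ and $d\psi_p=\mathrm{id}$. Arranging in Step~1 that $C$ carries a feature preserved by every isometry isotopic to the identity (for instance, forcing the unique shortest closed $\hat g_1$-geodesic to lie in $\mathrm{int}\,C$), together with $\chi(M)<0$ and Lefschetz fixed-point theory, then forces $\psi|_C=\mathrm{id}_C$. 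Consequently $\psi(\Sigma)=\Sigma$, $\psi|_{\partial\Sigma}=\mathrm{id}$, and $\phi:=\psi|_\Sigma$ — which one checks is isotopic to the identity rel $\partial\Sigma$ — lies in $\mathrm{Diff}_0(\Sigma,\partial\Sigma)$ and satisfies $g_1=\phi^*g_2$. The corollary of \cite{Pestov-Uhlmann-05} for simple disks is then immediate, a simple metric being of Anosov type with empty trapped set.

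\emph{Expected main obstacle.} The crux is Step~2: turning the informal necklace decomposition into the rigorous identity $\ell_{\hat g_i}([c])=\inf_\theta L_i(\theta)$ — controlling how closed geodesics cross the strictly convex interface $\partial\Sigma$, checking the infimum is attained at the geodesic's own necklace, and disposing of the trapped classes by the limiting argument — is where the substantive work lies; a secondary difficulty is supplying, in Steps~1 and~3, precisely enough extra structure on the closed extension to pin $\psi$ down on $C$.
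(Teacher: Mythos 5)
Your Steps~1 and~2 follow the paper's strategy in spirit but with a different mechanism for Step~2, while Step~3 has a genuine gap.

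On Step~2: you derive equality of the marked length spectra of $\hat g_1$ and $\hat g_2$ via a ``necklace'' decomposition and the variational identity $\ell_{\hat g_i}([c])=\inf_\theta L_i(\theta)$. This can be made rigorous, but it is considerably more delicate than what the paper does, which is to invoke the scattering/lens data (Lemma~\ref{lemma:lens}, i.e.\ \cite[Lemma~2.4]{Guillarmou-Mazzucchelli-18}): equality of $d_{g_1},d_{g_2}$ together with equal $C^\infty$-jets at $\partial\Sigma$ implies the exit \emph{points and velocities} of geodesics in $\Sigma$ agree in every relative homotopy class. Consequently the closed geodesics of $\hat g_1$ and $\hat g_2$ in a given free homotopy class actually coincide pointwise on $M\setminus\Sigma^\circ$ and have the same entry/exit data, giving equal lengths directly; no minimization over necklaces is needed, and one avoids the (false as literally stated) claim that a geodesic cannot be tangent to the strictly convex interface (tangencies from the $C$-side are allowed; the paper handles them by observing that a purely tangential intersection forces the geodesic to stay in $M\setminus\Sigma^\circ$). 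Your limiting argument for trapped classes, via Gromov hyperbolicity of the universal cover of $M$, is also different from the paper's, which stays inside $\Sigma$ and uses only the marked boundary distance to extract $\ell_{g_i}(\gamma_c^i)=\lim_n d_{g_i}(x,x,[\eta_n])/n$.

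The genuine gap is in Step~3. The isometry $\psi$ produced by MLS rigidity maps $(M,\hat g_1)$ to $(M,\hat g_2)$, which agree only on $C$; so $\psi$ carries the shortest closed $\hat g_1$-geodesic to the shortest closed $\hat g_2$-geodesic, and these need not coincide since $\hat g_2|_\Sigma\neq\hat g_1|_\Sigma$ in general. Even granting that a common shortest closed geodesic $\alpha\subset\mathrm{int}\,C$ can be arranged and that $\psi(\alpha)=\alpha$, $\psi|_\alpha$ can be a nontrivial rotation of the circle; the Lefschetz fixed-point theorem gives a fixed point of $\psi$ somewhere on $M$ but does not give a fixed point in $\mathrm{int}\,C$ with $d\psi_p=\mathrm{id}$, which is what your local-isometry rigidity argument requires. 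More fundamentally, nothing in your setup forbids $\psi$ from exchanging a piece of $\Sigma$ with a piece of $C$, so you cannot even conclude $\psi(\Sigma)=\Sigma$. The paper closes both gaps by two genericity perturbations of the extension on $M\setminus\Sigma$: one making $\mathrm{Isom}(M\setminus\Sigma^\circ)$ trivial (Lemma~\ref{lemma:kill-isometries}), and one excluding \emph{local} isometries between $\Sigma$ and $M\setminus\Sigma^\circ$ (Proposition~\ref{proposition:technical}, via a scalar-curvature blow-up of the $(m-1)$-jet at a net of interior points). Crucially, the second perturbation is only $C^m$-regular, which is why the transfer principle must be stated with MLS rigidity in finite regularity; your proposal does not account for this regularity loss, and it becomes unavoidable once one tries to rule out local isometries quantitatively rather than by a soft fixed-point argument.

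Finally, the conclusion that $\phi:=\psi|_\Sigma\in\mathrm{Diff}_0(\Sigma,\partial\Sigma)$ requires injectivity of $\eta\colon\mathrm{Mod}(\Sigma)\to\mathrm{Mod}(M)$, which the paper secures by arranging that no component of $M\setminus\Sigma^\circ$ is a disk or annulus (then citing \cite[Theorem~3.18]{Farb-Margalit-12}); you gesture at ``one checks is isotopic to the identity rel $\partial\Sigma$'' but this check is exactly the nontrivial content of Lemma~\ref{lemma:diff} and Proposition~\ref{proposition:extendability}.
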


The group $\mathrm{Diff}_0(\Sigma,\partial \Sigma)$ is defined as the set of all diffeomorphisms of $\Sigma$ fixing the boundary, and isotopic to the identity through a path of diffeomorphisms preserving the boundary. The fact that the isometry $\phi$ belongs to $\mathrm{Diff}_0(\Sigma,\partial \Sigma)$ is specific to surfaces; in higher dimensions, one should only expect $\phi$ to fix the boundary, see \S\ref{ssection:topology} where this is further discussed.

Recall that a Riemannian manifold with boundary is said to be \emph{simple} if it has strictly convex boundary, no conjugate points, and no trapped geodesics. This forces the manifold to be diffeomorphic to a ball, see \cite[Chapter 3, Section 3.8]{Paternain-Salo-Uhlmann-book}. In particular, simple manifolds are of Anosov type; in this case, $\mc{P}$ in \eqref{equation:mbd} is simply reduced to $\partial \Sigma \times \partial \Sigma$ (there is a unique geodesic connecting boundary points) and $d_g$ is called the \emph{boundary distance function}. Taking $\Sigma$ to be a disk, Theorem \ref{theorem:main} implies as a corollary the boundary rigidity result of Pestov-Uhlmann \cite{Pestov-Uhlmann-05}: two simple disks with same boundary distance function are isometric (via a boundary-preserving isometry). This will be further discussed in Corollary \ref{corollary:2} below.

The Pestov-Uhlmann rigidity theorem answered a conjecture first raised by Michel \cite{Michel-81} in 1981. For metrics within the same conformal class, Theorem \ref{theorem:main} was obtained by Croke-Dairbekov \cite{Croke-Dairbekov-04}. In the case of negatively-curved surfaces, Theorem~\ref{theorem:main} was proved by Guillarmou-Mazzucchelli \cite{Guillarmou-Mazzucchelli-18}, using ideas of Croke \cite{Croke-90} and Otal \cite{Otal-90,Otal-90-2}.

However, it is clear from their proof that it does \emph{not} extend beyond the setting of negatively-curved metrics. Theorem \ref{theorem:main} therefore rests on new tools: it will be mainly obtained as a corollary of Theorem \ref{theorem:transfer} below showing that the rigidity of the marked length spectrum on \emph{closed Anosov} manifolds implies the rigidity of the marked boundary distance function \eqref{equation:mbd} whenever the metrics considered can be \emph{isometrically embedded} into closed Anosov manifolds. We call this a \emph{transfer principle} from closed to open manifolds, see \S\ref{ssection:transfer} where this proof strategy is further discussed.

We also emphasize that the knowledge of the marked boundary distance function on a manifold with boundary of Anosov type is similar to that of the marked length spectrum on a closed Anosov manifold: in fact, in both cases, the problem of injectivity can be reformulated by asking whether a smooth conjugacy between two geodesic flows (isotopic to the identity, and equal to the identity in the boundary case) comes from an isometry on the base between the two metrics.

In higher dimension and for more general manifolds, the boundary rigidity problem was studied by several authors, see \cite{Gromov-83, Sharafutdinov-94, Stefanov-Uhlmann-98, Stefanov-Uhlmann-05, Burago-Ivanov-10, Lefeuvre-19-1} among other references. More recently, a microlocal approach developed by Stefanov-Uhlmann-Vasy \cite{Uhlmann-Vasy-16, Stefanov-Uhlmann-Vasy-16, Stefanov-Uhlmann-Vasy-17} established boundary rigidity in dimension $\geq 3$ under the extra assumption that the manifold is foliated by strictly convex hypersurfaces. The boundary rigidity problem is also intimately connected to the \emph{lens rigidity} problem which seeks to determine if the lens data, that is the scattering relation for the geodesic flow together with travel times of geodesics (see \S\ref{ssection:scattering} for a definition of the lens data), encodes the Riemannian structure of the manifold. We refer to \cite{Stefanov-Uhlmann-09, Guillarmou-17-2, Cekic-Guillarmou-Lefeuvre-22} for further details.  \\

As a corollary of Theorem \ref{theorem:main}, we will also obtain the following result which is connected to the \emph{minimal filling problem} introduced by Gromov \cite{Gromov-83}:

\begin{corollary}
\label{corollary}
Let $\Sigma$ be a smooth compact connected oriented surface with boundary. Let $g_1, g_2$ be two metrics of Anosov type on $\Sigma$. If $g_1$ and $g_2$ satisfy $d_{g_1} \geq d_{g_2}$, then $\vol_{g_1}(\Sigma) \geq \vol_{g_2}(\Sigma)$ with equality if and only if there exists a smooth diffeomorphism $\phi \in \mathrm{Diff}_0(\Sigma,\partial \Sigma)$ such that $g_1 = \phi^*g_2$.
\end{corollary}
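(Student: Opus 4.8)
The plan is to reduce the Corollary to Theorem~\ref{theorem:main} by means of a Santaló-type volume identity written in boundary coordinates, following the integral-geometric scheme of Croke and Croke--Dairbekov. Fix a smooth reference parametrization $\theta$ of $\partial\Sigma$. For a metric $g$ of Anosov type, the trapped set is a hyperbolic set, hence Liouville-null, so Santaló's formula gives $2\pi\,\vol_g(\Sigma)=\int_{\partial_+S_g\Sigma}\tau_g\,\dd\mu_g$, where $\partial_+S_g\Sigma$ is the set of inward-pointing unit vectors over $\partial\Sigma$, $\tau_g$ the exit time, and $\mu_g$ the Liouville (Santaló) measure. By strict convexity of $\partial\Sigma$ and the absence of conjugate points, the map sending $(x,v)$ to the triple (initial point, exit point, homotopy class of the geodesic run up to the exit time) is a bijection from $\partial_+S_g\Sigma$, minus the (null) trapped set, onto $\mc P$, under which $\tau_g$ corresponds to $d_g$; I push $\mu_g$ forward to a non-negative measure $\nu_g$ on $\mc P$. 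Since $\mc P$ fibers over $\partial\Sigma\times\partial\Sigma$, one can use $(\theta,\theta')$ as local coordinates on it; the first variation of arc length expresses $\partial_\theta d_g$ through the initial angle $\alpha_g\in(0,\pi)$ and the boundary length element, and differentiating in $\theta'$ while comparing with the change-of-variables Jacobian of $(\theta,\theta')\mapsto(x,v)$ identifies $\dd\nu_g=\partial^2_{\theta\theta'}d_g\,\dd\theta\,\dd\theta'$, an expression independent of the reference parametrization and, since $\nu_g\ge0$, with non-negative density for a suitable orientation of $\theta'$ (the same for both metrics). No conjugate points further makes $\theta'\mapsto\alpha_g$ a local diffeomorphism, so this density is in fact positive and $\nu_g$ has full support. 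Hence
\[
2\pi\,\vol_g(\Sigma)=\int_{\mc P}d_g\,\dd\nu_g,\qquad \dd\nu_g=\partial^2_{\theta\theta'}d_g\,\dd\theta\,\dd\theta'.
\]

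With $B(a,b):=\int_{\mc P}a\,\partial^2_{\theta\theta'}b\,\dd\theta\,\dd\theta'$, so that $2\pi\,\vol_{g_i}(\Sigma)=B(d_{g_i},d_{g_i})$, integration by parts in $\theta$ and $\theta'$ makes $B$ symmetric (the only boundary contributions arise where a geodesic degenerates to a point, i.e. near $\{x=y\}$, and there they vanish because $d_g\to0$). All integrals below converge thanks to the a priori comparison $C^{-1}d_{g_2}\le d_{g_1}\le C\,d_{g_2}$ on $\mc P$, which follows from equivalence of the two metrics together with the fact that, with no conjugate points, a geodesic minimises length in its homotopy class. Since $d_{g_1}\ge d_{g_2}\ge0$ on $\mc P$ and $\nu_{g_1},\nu_{g_2}\ge0$,
\[
2\pi\,\vol_{g_1}(\Sigma)=\int_{\mc P}d_{g_1}\,\dd\nu_{g_1}\;\ge\;\int_{\mc P}d_{g_2}\,\dd\nu_{g_1}\;=\;\int_{\mc P}d_{g_1}\,\dd\nu_{g_2}\;\ge\;\int_{\mc P}d_{g_2}\,\dd\nu_{g_2}=2\pi\,\vol_{g_2}(\Sigma),
\]
the middle equality being the symmetry $B(d_{g_2},d_{g_1})=B(d_{g_1},d_{g_2})$. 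This gives $\vol_{g_1}(\Sigma)\ge\vol_{g_2}(\Sigma)$. If equality holds the chain collapses, so in particular $\int_{\mc P}(d_{g_1}-d_{g_2})\,\dd\nu_{g_2}=0$; since $d_{g_1}-d_{g_2}$ is continuous and non-negative while $\nu_{g_2}$ has full support, $d_{g_1}=d_{g_2}$, and Theorem~\ref{theorem:main} then produces $\phi\in\mathrm{Diff}_0(\Sigma,\partial\Sigma)$ with $g_1=\phi^*g_2$. The converse implication is immediate, since such a $\phi$ preserves the Riemannian volume.

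The purely formal part — the chain of inequalities and the appeal to Theorem~\ref{theorem:main} — is short; the step I expect to require genuine care is making the identity $2\pi\,\vol_g(\Sigma)=\int_{\mc P}d_g\,\dd\nu_g$ and the symmetry of $B$ rigorous in the non-simple case, where $\mc P$ is an infinite cover of $\partial\Sigma\times\partial\Sigma$ and $d_g$ is unbounded: one must verify that the trapped set is Liouville-null so that Santaló's formula applies, control $d_g$ together with its first and second derivatives near the degeneration locus and uniformly over homotopy classes, and confirm that the integration by parts produces neither boundary nor distributional contributions.
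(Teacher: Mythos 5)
Your proof is correct and takes essentially the same route as the paper. The paper simply cites \cite{Croke-Dairbekov-04} together with Theorem~\ref{theorem:main}, noting that the SGM hypothesis there can be replaced by Liouville-nullity of the trapped set; your argument reconstructs exactly that Croke--Dairbekov volume-comparison scheme (Santaló formula, the symmetric bilinear form $B(a,b)=\int_{\mc P} a\,\partial^2_{\theta\theta'}b\,\dd\theta\,\dd\theta'$, reduction of the equality case to $d_{g_1}=d_{g_2}$) with precisely the adaptation the paper indicates.
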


Here, $\vol_g(\Sigma)$ denotes the Riemannian volume of $\Sigma$. The previous corollary follows from \cite[Theorem 1.4 and Corollary 1.5]{Croke-Dairbekov-04} and Theorem \ref{theorem:main} above. Although the statements of \cite[Theorem 1.4 and Corollary 1.5]{Croke-Dairbekov-04} do not formally allow trapped geodesics, their proofs go through as long as the trapped set has measure 0. In particular, it is applicable to the surfaces of Anosov type (see \S\ref{ssection:geometry} where this is further discussed).
%It should be observed that surfaces of Anosov type \emph{do not} fit exactly into the SGM class of Croke-Dairbekov because the SGM class does not allow trapped geodesics. However, the proof of \cite[Theorem 1.4 and Corollary 1.5]{Croke-Dairbekov-04} goes through using that the trapped set of surfaces of Anosov type is of measure zero (with respect to the Liouville measure, see \S\ref{ssection:geometry} where this is further discussed). 

\subsection{A transfer principle: from closed to open manifolds}

\label{ssection:transfer}

Recall that on a smooth closed manifold $M$, a metric $g$ is \emph{Anosov} if its geodesic flow on the unit tangent bundle is Anosov. Typical examples are provided by metrics of negative sectional curvature; this was originally observed by Anosov \cite{Anosov-67} himself. However, there exist also many Anosov metrics with areas of positive sectional curvature, see \cite{Eberlein-72, Ruggiero-91, Donnay-Pugh-03}. In order to state the main result of this paragraph, we first need to introduce the following terminology.

\begin{definition}[Extendable metrics]
\label{definition:extendable}
Let $\Sigma$ be a smooth compact connected oriented manifold with boundary, and let $g$ be a metric of Anosov type on $\Sigma$. We say that $g$ is \emph{extendable} if there exists a smooth closed connected oriented Anosov Riemannian manifold $(M,g')$ (of same dimension as $\Sigma$) and an isometric embedding $\iota : (\Sigma,g) \hookrightarrow (M,g')$. If $g_1$ and $g_2$ are two metrics of Anosov type on $\Sigma$, we say that they are \emph{consistently extendable} if they are both extendable to the same manifold $M$, and the extensions $g_1'$ and $g_2'$ coincide on $M \setminus \Sigma$.
\end{definition}

Observe that a necessary condition for $g_1$ and $g_2$ to be consistently extendable is that their $C^\infty$-jets coincide on the boundary $\partial \Sigma$. Given a smooth closed manifold $M$ carrying Anosov metrics, we denote by $\mc{C}$ the set of free homotopy classes of $M$ (this is in $1$-to-$1$ correspondence with conjugacy classes of $\pi_1(M)$). Then, for every Anosov metric $g$, there exists a unique closed geodesic $\gamma_g(c) \in c$ in each free homotopy class $c \in \mc{C}$. One can thus define the \emph{marked length spectrum} map by
\begin{equation}
    \label{equation:mls}
  L_g : \mc{C} \to (0,\infty), \qquad  L_g(c) := \ell_g(\gamma_g(c)).
\end{equation}
In negative curvature, the Burns-Katok Conjecture \cite{Burns-Katok-85} (also known as the marked length spectrum rigidity Conjecture) asserts that the map \eqref{equation:mls} is injective, that is if two metrics $g_1$ and $g_2$ with negative sectional curvature have same marked length spectrum, then there exists a diffeomorphism $\phi : M \to M$, isotopic to the identity, such that $\phi^*g_1 = g_2$. More generally, it is believed that the Burns-Katok Conjecture should still hold in the setting of Anosov metrics. In dimension two, this problem was recently addressed for Anosov surfaces by Guillarmou, the second author and Paternain \cite{Guillarmou-Lefeuvre-Paternain-23}. In negative curvature, the Burns-Katok Conjecture was established on surfaces independently by Croke \cite{Croke-90} and Otal \cite{Otal-90}. In higher dimensions, a partial rigidity result was obtained by Hamenstädt when one of the two metrics is locally symmetric \cite{Hamenstadt-99}, and local rigidity of the marked length spectrum was settled by Guillarmou and the second author \cite{Guillarmou-Lefeuvre-19} (see also \cite{Guillarmou-Knieper-Lefeuvre-19} for an alternative proof with Knieper based on the notion of geodesic stretch).

The marked boundary rigidity problem on open manifolds shares many similarities with the marked length rigidity problem on closed manifolds. For instance, in the case of negatively-curved surfaces, the injectivity of \eqref{equation:mbd} was proved in \cite{Guillarmou-Mazzucchelli-18} by adapting Otal's argument \cite{Otal-90} for the marked length spectrum on closed surfaces. However, as far as Anosov surfaces are concerned, it seems that the arguments of \cite{Guillarmou-Lefeuvre-Paternain-23} do not transfer directly to the open case: the main reason is that, similarly to \cite{Guillarmou-Lefeuvre-Paternain-23}, and using earlier results by Guillarmou \cite[Theorem 4]{Guillarmou-17-2}, it is possible to show that two metrics with the same marked boundary distance function are conformally equivalent on every finite cover via a diffeomorphism preserving the boundary; however, it is not clear that this implies that the metrics are conformally equivalent via a diffeomorphism preserving the boundary and isotopic to the identity. The way we circumvent this difficulty is to show a more general transfer principle, valid in any dimension, relating the injectivity of the marked boundary distance function \eqref{equation:mbd}, and that of the marked length spectrum \eqref{equation:mls} for extendable metrics:

\begin{theorem}[Transfer principle]
\label{theorem:transfer}
Let $\Sigma$ be a smooth compact connected oriented manifold with boundary. Let $g_1, g_2$ be two smooth metrics of Anosov type on $\Sigma$. Assume that $g_1$ and $g_2$ have same marked boundary distance function, that is $d_{g_1} = d_{g_2}$, and that the metrics are consistently extendable to a closed manifold $M$. Further assume that the marked length spectrum is injective on $M$ for Anosov metrics of finite regularity. Then there exists a smooth diffeomorphism $\phi \colon \Sigma \to \Sigma$ such that $\phi|_{\partial \Sigma}=\mathbf{1}_{\partial \Sigma}$ and $g_1 = \phi^*g_2$. 
\end{theorem}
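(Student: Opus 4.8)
The plan is to move the problem onto the closed manifold $M$, apply the marked length spectrum rigidity hypothesis there, and then transfer the resulting isometry back to $\Sigma$. Concretely I would proceed in three steps: (i) show that the extensions have the same marked length spectrum, $L_{g_1'}=L_{g_2'}$ on $M$; (ii) invoke the hypothesis to obtain a diffeomorphism $\psi\colon M\to M$, isotopic to the identity, with $\psi^*g_2'=g_1'$; (iii) normalise $\psi$ so that it preserves $\Sigma$ and restricts to the identity on $\partial\Sigma$, which produces the desired $\phi$.

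For step (i), write $N:=M\setminus\Sigma^\circ$, so that $g_1'|_N=g_2'|_N=:h$ and these metrics agree to infinite order along $\partial\Sigma$ (a necessary condition for consistent extendability). Fix $c\in\mc C$ and let $\gamma_1:=\gamma_{g_1'}(c)$ be the unique closed $g_1'$-geodesic in $c$. By strict convexity of $\partial\Sigma$ for both metrics, exactly one of the following holds: $\gamma_1\subset N$; $\gamma_1\subset\Sigma^\circ$ (a periodic trapped geodesic); or $\gamma_1$ meets $\partial\Sigma$ transversally at finitely many points. In the first case $\gamma_1$ is also a $g_2'$-geodesic and $L_{g_1'}(c)=L_{g_2'}(c)$ trivially. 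In the third case, decompose $\gamma_1$ cyclically as $\gamma_1=\alpha_1*\beta_1*\cdots*\alpha_k*\beta_k$, where the $\alpha_i$ are $g_1$-geodesic arcs in $\Sigma$ with endpoints on $\partial\Sigma$ and the $\beta_i$ are geodesic arcs in $N$. The Anosov-type hypothesis forces $\alpha_i=\gamma^{g_1}_{x_i,y_i}(c_i)$ for the triple $(x_i,y_i,c_i)\in\mc P$ determined by its endpoints and homotopy class in $\Sigma$; replace it by $\alpha_i':=\gamma^{g_2}_{x_i,y_i}(c_i)$, which has the same endpoints, is homotopic to $\alpha_i$ rel endpoints in $\Sigma\subset M$, and satisfies $\ell_{g_2}(\alpha_i')=d_{g_2}(x_i,y_i,c_i)=d_{g_1}(x_i,y_i,c_i)=\ell_{g_1}(\alpha_i)$. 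Keeping each $\beta_i$ (which is also a $g_2'$-geodesic arc of unchanged length, since $g_1'=g_2'$ on $N$), one obtains a closed broken $g_2'$-geodesic $\wt\gamma_2:=\alpha_1'*\beta_1*\cdots*\alpha_k'*\beta_k$ freely homotopic to $\gamma_1$, hence representing $c$, with $\ell_{g_2'}(\wt\gamma_2)=\ell_{g_1'}(\gamma_1)=L_{g_1'}(c)$. Since $(M,g_2')$ is Anosov it has no conjugate points, so its closed geodesics minimise length in their free homotopy class; hence $L_{g_2'}(c)\le\ell_{g_2'}(\wt\gamma_2)=L_{g_1'}(c)$, and the reverse inequality follows by symmetry. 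Finally, if $\gamma_1\subset\Sigma^\circ$, I would recover $\ell_{g_i}(\gamma_1)$ from $d_{g_i}$ as a limit $\lim_{n\to\infty}\tfrac{1}{n}\,d_{g_i}(x_n,y_n,c_n)$ along boundary-to-boundary $g_i$-geodesic arcs shadowing $\gamma_1$ exactly $n$ times (such arcs exist and have length $n\,\ell_{g_i}(\gamma_1)+O(1)$ because the trapped set is hyperbolic, by Anosov closing/shadowing), so again $L_{g_1'}(c)=L_{g_2'}(c)$. This gives $L_{g_1'}=L_{g_2'}$ on all of $\mc C$.

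For step (ii), $g_1'$ and $g_2'$ are Anosov metrics on $M$ (of whatever regularity the extensions carry) with the same marked length spectrum, so the hypothesis provides $\psi\colon M\to M$, isotopic to the identity, with $\psi^*g_2'=g_1'$; moreover $\psi$ is unique with these properties, since any two such would differ by an isometry of $(M,g_1')$ isotopic to the identity, which is trivial because the geodesic flow is Anosov.

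Step (iii) — normalising $\psi$ near $N$ — is where I expect the main difficulty. Because $g_1'=g_2'$ on $N$, the restriction $\psi|_N\colon(N,h)\to(M,g_2')$ is an isometric embedding isotopic to the inclusion, but a priori $\psi(N)\ne N$: the marked length spectrum rigidity could move $\Sigma$ to a genuinely different isometric convex sub-domain of $(M,g_2')$. One must rule this out and conclude $\psi(\Sigma)=\Sigma$; the natural tools are the uniqueness of $\psi$, the fact that the geodesic flows of $g_1'$ and $g_2'$ coincide over the set of directions whose geodesic never leaves $N$ (and $\psi$ conjugates these flows), and the constraint $d_{g_1}=d_{g_2}$, which pins down how geodesics scatter across $\Sigma$. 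Granting $\psi(\Sigma)=\Sigma$, the restriction $\psi|_{\partial\Sigma}$ is an isometry of $(\partial\Sigma,h|_{\partial\Sigma})$ which, being the restriction of a map isotopic to the identity, is isotopic to $\mathbf 1_{\partial\Sigma}$ and preserves the marked boundary distance function of $(\Sigma,g_2)$; one then argues $\psi|_{\partial\Sigma}=\mathbf 1_{\partial\Sigma}$ (after, if necessary, composing $\psi$ with an ambient isometry of $(M,g_2')$ or a boundary-preserving isometry of $(\Sigma,g_2)$). The map $\phi:=\psi|_\Sigma\colon\Sigma\to\Sigma$ then satisfies $\phi|_{\partial\Sigma}=\mathbf 1_{\partial\Sigma}$ and $\phi^*g_2=(\psi^*g_2')|_\Sigma=g_1'|_\Sigma=g_1$; and since $g_1,g_2$ are smooth, the equation $\phi^*g_2=g_1$ is elliptic for $\phi$ (after passing to harmonic coordinates), so $\phi\in C^\infty$, which completes the proof.
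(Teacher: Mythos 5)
Your steps (i) and (ii) are essentially correct and parallel the paper's Lemma \ref{lemma:same-mls} and its invocation of the MLS hypothesis. In case (ii) you build a broken $g_2'$-geodesic and appeal to the length-minimising property of closed geodesics on manifolds without conjugate points, whereas the paper matches lens data (Lemma \ref{lemma:lens}) to exhibit a genuine smooth closed $g_2'$-geodesic --- both routes work, and yours is a touch leaner. Your case (iii) via shadowing requires more care to set up than the paper's elementary ``wind $n$ times around $\gamma^1_c$'' argument, but the underlying idea is sound.

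The genuine gap is in step (iii), which is exactly the gap that the paper's Section 3 exists to fill. None of the tools you list will by themselves force $\psi(\Sigma)=\Sigma$. Uniqueness of $\psi$ does not help: if the MLS isometry moves $\Sigma$, uniqueness only says it is the unique map doing so, not that this cannot occur. (Your aside that an isometry of $(M,g_1')$ isotopic to the identity is trivial ``because the geodesic flow is Anosov'' itself needs an argument; it holds for surfaces of genus at least $2$, but is not automatic in higher dimensions.) The coincidence of the two flows over directions staying in $N$ does not prevent $\psi$ from carrying part of $N$ into $\Sigma^\circ$, since $\psi$ conjugates the flows globally rather than region by region; and $d_{g_1}=d_{g_2}$ is already consumed in step (i). The paper's resolution is to \emph{pre-perturb} the extended metrics conformally on $M\setminus\Sigma$ (leaving $\Sigma$ untouched, so Lemma \ref{lemma:same-mls} still applies) so that afterwards (a) $(M\setminus\Sigma^\circ,g_1'')$ has no nontrivial isometries (Lemma \ref{lemma:kill-isometries}) and (b) there is no local isometry between $(\Sigma,g_i)$ and $(M\setminus\Sigma^\circ,g_1'')$ (Proposition \ref{proposition:technical}). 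Property (b) forces the MLS-supplied isometry to preserve $\Sigma$ (otherwise it restricts to a forbidden local isometry), and property (a) then forces its restriction to $M\setminus\Sigma^\circ$ to be the identity, giving $\phi|_{\partial\Sigma}=\mathbf{1}_{\partial\Sigma}$. Building these perturbations is the point of the scalar-curvature-prescription machinery in Section 3; moreover, the finite-regularity hypothesis on MLS rigidity in the theorem's statement --- which your proposal never engages with --- is forced precisely because Proposition \ref{proposition:technical} only delivers a $C^m$ conformal factor, not a smooth one.
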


Unlike the $2$-dimensional case, the diffeomorphism $\phi$ is \emph{not} necessarily an element of $\mathrm{Diff}_0(\Sigma,\partial \Sigma)$, see \S\ref{ssection:topology} where this is further discussed. Injectivity of the marked length spectrum for Anosov metrics of finite regularity means that there exists an integer $k_0 \geq 0$ such that if $g_1$ and $g_2$ are $C^{k_0}$-regular Anosov metrics on $M$ such that $L_{g_1}=L_{g_2}$, then they are isometric (via a $C^{k_0+1}$-regular diffeomorphism). The need to consider metrics of finite regularity comes from a technical artifact in the proof (see Proposition \ref{proposition:technical}); however, it is harmless in practice.

Theorem \ref{theorem:main} will then be a consequence of the combination of: Theorem \ref{theorem:transfer} above, the injectivity of the marked length spectrum for Anosov surfaces \cite{Guillarmou-Lefeuvre-Paternain-23} for $C^4$-regular metrics, and the fact that metrics of Anosov type on surfaces with boundary are extendable, which follows from the work by the second author with Chen and Gogolev in \cite{Chen-Erchenko-Gogolev-20}.  \\

A consequence of Theorem \ref{theorem:transfer}, which may be of independent interest, is that the Burns-Katok conjecture \cite{Burns-Katok-85} of marked length spectrum rigidity (for Anosov metrics) implies Michel's conjecture \cite{Michel-81} of boundary rigidity of simple manifolds:

\begin{corollary}
\label{corollary:2}
In dimension $n \geq 2$, marked length spectrum rigidity of Anosov $n$-manifolds implies boundary rigidity of simple $n$-manifolds. 
\end{corollary}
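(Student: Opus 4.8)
The plan is to deduce the statement from the transfer principle, Theorem \ref{theorem:transfer}. Assume the marked length spectrum is injective for Anosov metrics on every closed $n$-manifold — we use this in the finite-regularity form appearing in the hypothesis of Theorem \ref{theorem:transfer}. Let $g_1,g_2$ be simple metrics on $\Sigma$ with the same boundary distance function. A simple manifold is diffeomorphic to a ball and has empty trapped set, so $g_1$ and $g_2$ are of Anosov type, and, as recalled after \eqref{equation:mbd}, their marked boundary distance functions are the usual boundary distance functions; thus $d_{g_1}=d_{g_2}$ in the sense of \eqref{equation:mbd}. Hence, by Theorem \ref{theorem:transfer}, it suffices to check that $g_1$ and $g_2$ are consistently extendable to a closed Anosov $n$-manifold.

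First I would normalize the metrics near $\partial\Sigma$. Since $d_{g_1}=d_{g_2}$, the classical determination of the jet of a simple metric from its boundary distance function (see \cite{Paternain-Salo-Uhlmann-book} and the references therein, going back to \cite{Michel-81}) yields a diffeomorphism $\psi$ of $\Sigma$ with $\psi|_{\partial\Sigma}=\mathbf{1}_{\partial\Sigma}$ such that $\psi^*g_2-g_1$ vanishes to infinite order along $\partial\Sigma$. Replacing $g_2$ by $\psi^*g_2$ preserves simplicity and the boundary distance function, and composing the final isometry with $\psi$ recovers the statement for the original pair; so we may assume henceforth that $g_1$ and $g_2$ have the same $\infty$-jet on $\partial\Sigma$.

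The heart of the matter is to realize $(\Sigma,g_1)$ and $(\Sigma,g_2)$ as isometrically embedded submanifolds of a single closed Anosov $n$-manifold, with extensions agreeing off $\Sigma$. Since $g_1$ and $g_2$ have the same $\infty$-jet on the strictly convex hypersurface $\partial\Sigma$, it is enough to produce one compact manifold $W$ with $\partial W\cong\partial\Sigma$ and a metric $h$ on $W$ matching this common jet along the seam, such that $M:=\Sigma\cup_{\partial\Sigma}W$ is closed and aspherical — take $W$ to be a closed aspherical (say hyperbolic) $n$-manifold with an open geodesic ball removed — and such that, for $i=1,2$, the metric $g_i'$ on $M$ equal to $g_i$ on $\Sigma$ and to $h$ on $W$ is smooth and Anosov; the two extensions then differ only on $\Sigma$, so they are consistent. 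For surfaces this is precisely the extension theorem of \cite{Chen-Erchenko-Gogolev-20}. In arbitrary dimension one runs the same strategy: choose $h$ so that, in a collar of $\partial W$, it interpolates from the prescribed jet to a strongly negatively curved metric on a large core of $W$. Because $\Sigma$ is simple it has no trapped geodesics and strictly convex boundary, so every maximal sub-arc of a geodesic of $M$ contained in $\Sigma$ joins two points of $\partial\Sigma$, hence equals the unique $g_i$-geodesic between them in its (unique) homotopy class, and therefore has length at most $\mathrm{diam}_{g_i}(\Sigma)$ — a bound uniform over geodesics. One then verifies, via criteria of the type in \cite{Donnay-Pugh-03}, that $(M,g_i')$ has no conjugate points and a uniformly hyperbolic geodesic flow: the contribution of the uniformly bounded excursions through the possibly positively curved region $\Sigma$ is dominated by the expansion and contraction of Jacobi fields accumulated in $W$, once $W$ is chosen large enough and strongly enough negatively curved. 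Carrying out this verification — especially in dimension $\geq 3$, where $\Sigma$ is a macroscopically large positively curved region rather than a thin one — is the main obstacle, and the step where the absence of trapped geodesics in simple manifolds is indispensable.

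Granting the extension, we conclude: $g_1'$ and $g_2'$ are consistently extendable to the closed Anosov $n$-manifold $M$, $d_{g_1}=d_{g_2}$, and the marked length spectrum is injective on $M$ by hypothesis; so Theorem \ref{theorem:transfer} provides a diffeomorphism $\phi\colon\Sigma\to\Sigma$ with $\phi|_{\partial\Sigma}=\mathbf{1}_{\partial\Sigma}$ and $g_1=\phi^*g_2$. Composing with $\psi$ yields a boundary-preserving isometry between the original simple metrics, which is exactly boundary rigidity.
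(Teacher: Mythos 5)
Your overall route matches the paper's: normalize the jets at the boundary, invoke consistent extendability to a closed Anosov manifold, and apply Theorem \ref{theorem:transfer}. The gap is at the step you yourself flag as ``the main obstacle'': you treat the construction of a closed Anosov extension in dimension $n \geq 3$ as something to be redone from scratch, whereas \cite{Chen-Erchenko-Gogolev-20} already proves the Anosov extension theorem for simple manifolds in \emph{every} dimension $n \geq 2$, not only for surfaces. The paper's proof of this corollary simply invokes that higher-dimensional result, exactly as Proposition \ref{proposition:extendability} invokes the surface case, and notes (as you also observe) that using one and the same extension metric on $M \setminus \Sigma$ for both $g_1$ and $g_2$ yields consistency. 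As written, then, your proof is incomplete precisely where the load-bearing input is needed.

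Moreover, the heuristic you offer for why the glued metric should be Anosov does not close. Bounding each excursion of a geodesic through $\Sigma$ by $\mathrm{diam}_{g_i}(\Sigma)$ controls a single visit, but a geodesic of $M$ may re-enter $\Sigma$ arbitrarily many times, and without further control there is no lower bound on the time spent in the negatively curved region between consecutive visits -- so the contributions from $\Sigma$ need not be dominated by the hyperbolicity accrued outside. The missing ingredient in \cite{Chen-Erchenko-Gogolev-20} (and in step (iii) of Proposition \ref{proposition:extendability}) is the separation parameter $R$: the boundary spheres are glued into a constant negative curvature core so that they are pairwise at distance $\geq R$ with $R$ taken large depending on $g_1,g_2$, which forces a geodesic that leaves $\Sigma$ to accumulate a quantitatively large amount of contraction/expansion before it can return; that estimate is what makes the Jacobi-field argument close. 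Replacing your sketch with a direct citation of the higher-dimensional extension result of \cite{Chen-Erchenko-Gogolev-20} repairs the proof; your jet normalization, appeal to Theorem \ref{theorem:transfer}, and final composition with $\psi$ are correct.
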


The previous corollary follows from Theorem \ref{theorem:transfer} and the fact that simple manifolds are extendable by \cite{Chen-Erchenko-Gogolev-20}. Indeed, when $\Sigma$ is diffeomorphic to a ball, a metric $g$ on $\Sigma$ is simple if and only if it is of Anosov type; similarly to Proposition \ref{proposition:extendability}, \cite{Chen-Erchenko-Gogolev-20} allows to show that two simple metrics on $\Sigma$ with same boundary distance functions are consistently extendable and this is sufficient to conclude.

Moreover, in the same way, the combination of Theorem \ref{theorem:transfer}, \cite[Theorem 1 and Corollary 1.1]{Guillarmou-Lefeuvre-19} and \cite{Chen-Erchenko-Gogolev-20} partially recovers local marked boundary rigidity for manifolds of Anosov type with non-positive curvature proved by the second author \cite[Theorem 1.1 and Corollary 1.1]{Lefeuvre-19-2}. We note that in \cite{Chen-Erchenko-Gogolev-20} the extensions are constructed only under the additional assumption that the boundary components of the manifold are diffeomorphic to a sphere or to $S^1 \times S^{n-1}$; in a forthcoming paper \cite{GuedesBonthonneau-23}, Guedes Bonthonneau is able to remove this restriction on the topology of the boundary components when $n=3$, thus showing that any $3$-manifold with boundary and of Anosov type is extendable in the sense of Definition \ref{definition:extendable}. \\

\noindent \textbf{Acknowledgement:} The authors wish to thank the CIRM, where part of this article was written, for support and  hospitality. The authors also thank C. Guillarmou, G. Paternain, A. Wilkinson for valuable comments on an earlier version of this manuscript. AE was supported by the Simons Foundation (Grant Number 814268 via the Mathematical Sciences Research Institute, MSRI) and NSF grant DMS-1900411.

\section{Preliminaries}

Throughout this section, $(\Sigma,g)$ is a smooth compact connected oriented $n$-dimensional Riemannian manifold with boundary. 

\subsection{Geometric assumptions}

\label{ssection:geometry}

Let $S\Sigma$ be the unital tangent bundle of $(\Sigma,g)$ and denote by $\pi : S\Sigma \to\Sigma$ the footpoint projection, $(\varphi_t)_{t \in \R}$ the geodesic flow on $S\Sigma$ (possibly incomplete) and $X \in C^\infty(S\Sigma, T(S\Sigma))$ its infinitesimal generator.
%Let $\V := \ker \dd\pi \subset T(S\Sigma)$ be the vertical space and $\HH$ the horizontal space on $T(S\Sigma)$ induced by the Levi-Civita connection of $g$ and such that
%\[
%T(S\Sigma) = \R X \oplus \HH \oplus \V.
%\]
The \emph{Liouville $1$-form} $\lambda$ is defined such that $\iota_X \lambda = 1, \iota_X \dd \lambda = 0$; it is invariant by the geodesic flow, namely $\mc{L}_X \lambda = 0$. Moreover, $\dd\lambda$ is non-degenerate and this allows to define the \emph{Liouville volume form}
\[
\mu := \dfrac{1}{(n-1)!} \lambda \wedge \dd\lambda^{\wedge (n-1)},
\]
which is also invariant by the geodesic flow.

We then define
\[
\partial_{\mp}S\Sigma := \left\{ (x,v) \in S\Sigma ~|~ x \in \partial \Sigma, \pm g_x(v,\nu(x)) < 0 \right\},
\]
where $\nu(x)$ denotes the unit outward-pointing normal vector to $T\partial \Sigma$ at $x \in \partial \Sigma$. The set $\partial_-S\Sigma$ (resp. $\partial_+S\Sigma$) corresponds to all inward-pointing (resp. outward-pointing) vectors at the boundary. The trapped tails $\Gamma_\mp$ are defined as
\begin{equation}
\label{equation:splitting}
\Gamma_\mp := \left\{ (x,v) \in S\Sigma ~|~ \forall t \geq 0, \pi(\varphi_{\pm t}(x,v)) \in \Sigma\right\},
\end{equation}
and the trapped set $K := \Gamma_- \cap \Gamma_+ \subset S\Sigma$ is the set of all unit vectors trapped both in the past and in the future. The set $K$ is a closed flow-invariant set in $S\Sigma$. By strict convexity of $\partial\Sigma$, we have $K\subset S\Sigma^\circ$.

The trapped set $K$ is said to be \emph{hyperbolic} if there exists a flow-invariant continuous splitting 
\[
T(S\Sigma)|_{K} = \R X \oplus E^s \oplus E^u,
\]
and uniform constants $C,\lambda > 0$ such that
\begin{equation}
\label{equation:anosov}
\begin{array}{ll}
 |d\varphi_t(w)| \leq C e^{-\lambda t}|w|, & \qquad  \forall t \geq 0, \forall w \in E^s, \\
  |d\varphi_{-t}(w)| \leq C e^{-\lambda t}|w|, & \qquad  \forall t \geq 0, \forall w \in E^u.
\end{array}
\end{equation}
We can now introduce the terminology of metrics of \emph{Anosov type}:

\begin{definition}
\label{definition:anosov}
The metric $g$ on $\Sigma$ is said to be of \emph{Anosov type} if:
\begin{enumerate}[label=(\roman*)]
    \item $\partial \Sigma$ is strictly convex, that is the second fundamental form of $g$ is strictly positive on the boundary $\partial \Sigma$;
    \item The metric $g$ has no conjugate points in $\Sigma$;
    \item The trapped set $K$ is hyperbolic in the sense of \eqref{equation:splitting} and \eqref{equation:anosov}.
\end{enumerate}
\end{definition}

It can be proved that if $g$ is of Anosov type, the trapped tails $\Gamma_\pm$ have zero Liouville measure, that is $\mu(\Gamma_- \cup \Gamma_+) = 0$ (see \cite[Section 2.4]{Guillarmou-17-2}).

\subsection{Scattering relation and equivalence of $C^\infty$-jets}

\label{ssection:scattering}

The following result on the equivalence of $C^\infty$-jets at the boundary for metrics with same marked boundary distance function will be needed:

\begin{lemma}
\label{lemma:jets}
Let $\Sigma$ be a smooth compact manifold with boundary, and let $g_1,g_2$ be two metrics on $\Sigma$ of Anosov type. If $d_{g_1} = d_{g_2}$ on $\partial \Sigma$, then there exists a smooth diffeomorphism $\phi \in \mathrm{Diff}_0(\Sigma,\partial \Sigma)$ such that $\phi^*g_1 - g_2$ vanishes to infinite order on $\partial \Sigma$.
\end{lemma}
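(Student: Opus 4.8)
The plan is to localise the problem to a collar of $\partial\Sigma$, invoke the classical fact that the near-diagonal boundary distance determines the $C^\infty$-jet of the metric at the boundary modulo a boundary-fixing gauge, and then globalise the resulting collar diffeomorphism so that it lies in $\mathrm{Diff}_0(\Sigma,\partial\Sigma)$. The first point I would record is that the hypothesis $d_{g_1}=d_{g_2}$ yields equality of the \emph{ordinary} (unmarked) boundary distance functions near the diagonal. Indeed, by strict convexity of $\partial\Sigma$ there is a uniform scale $\delta_0>0$ (common to $g_1$ and $g_2$, by compactness) such that for $x,y\in\partial\Sigma$ at distance $<\delta_0$ the $g_i$-minimising geodesic from $x$ to $y$ is unique and stays inside a $\delta_0$-collar of $\partial\Sigma$; its homotopy class is therefore the distinguished class $c_0=c_0(x,y)\in\mc{C}_{x,y}$ admitting a representative inside a small collar, which depends only on $x,y$ and the topology of $\Sigma$ near $\partial\Sigma$, not on the metric. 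Hence $\mathrm{dist}_{g_i}(x,y)=d_{g_i}(x,y,c_0)$ and, since the marked functions agree, $\mathrm{dist}_{g_1}=\mathrm{dist}_{g_2}$ on a neighbourhood of the diagonal of $\partial\Sigma\times\partial\Sigma$. (Equivalently, this can be phrased via the lens data of $\S\ref{ssection:scattering}$: the scattering relations and travel times of $g_1$ and $g_2$ agree near the glancing set, which carries the same boundary information.)

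Next I would pass to boundary normal (semigeodesic) coordinates for each metric: on the abstract collar $C=\partial\Sigma\times[0,\eps)$ one has embeddings $\psi_i:C\hookrightarrow\Sigma$ with $\psi_i|_{\partial\Sigma\times\{0\}}=\id_{\partial\Sigma}$ and $\psi_i^*g_i=\dd t^2+h^{(i)}_t$, where $t\mapsto h^{(i)}_t$ is a smooth family of metrics on $\partial\Sigma$ (here $t$ is the $g_i$-distance to $\partial\Sigma$). The length of the short geodesic joining nearby boundary points is a functional of the family $h^{(i)}_t$ whose expansion as $\mathrm{dist}_{g_i}(x,y)\to 0$ recovers, order by order, the jets $\partial_t^k h^{(i)}_t|_{t=0}$ modulo tangential reparametrisation of $\partial\Sigma$; at first order this is just the statement that the second fundamental form of $\partial\Sigma$ is determined (and strictly positive). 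This is precisely the boundary-jet determination contained in the semiglobal boundary rigidity theorem of Lassas, Sharafutdinov and Uhlmann (in one form or another going back to Michel, and with related statements of Stefanov and Uhlmann). Combined with the previous step it furnishes a smooth diffeomorphism $F$ of a (possibly smaller) collar $C$ with $F|_{\partial\Sigma\times\{0\}}=\id$ and $F^*(\dd t^2+h^{(1)}_t)-(\dd t^2+h^{(2)}_t)$ vanishing to infinite order at $t=0$. Since $t$ is intrinsic to each $g_i$, $F$ preserves $t$ to infinite order, so after a further correction vanishing to infinite order at $t=0$ I would arrange $F(x,t)=(f_t(x),t)$ with $f_0=\id_{\partial\Sigma}$ and $t\mapsto f_t$ smooth.

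Finally I would transport and globalise. Set $\wt\phi:=\psi_1\circ F\circ\psi_2^{-1}$ on the collar neighbourhood $\psi_2(C)\subset\Sigma$; then $\wt\phi|_{\partial\Sigma}=\id$ and $\wt\phi^*g_1-g_2$ vanishes to infinite order at $\partial\Sigma$. Writing $\wt\phi(x,t)=(f_t(x),t)$ in the collar, I would interpolate $f_t$ back to $\id_{\partial\Sigma}$ for $t$ bounded away from $0$ by \emph{retracing} the path $s\mapsto f_s$ in $\mathrm{Diff}(\partial\Sigma)$ (which starts at $\id$), and set $\phi$ equal to this modified collar map near $\partial\Sigma$ and to $\id_\Sigma$ outside a collar. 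Then $\phi$ is a global diffeomorphism of $\Sigma$ agreeing with $\wt\phi$ near $\partial\Sigma$, so $\phi^*g_1-g_2$ still vanishes to infinite order on $\partial\Sigma$; and because the loop in $\mathrm{Diff}(\partial\Sigma)$ based at $\id$ associated with the collar part of $\phi$ is of the form $\gamma\cdot\gamma^{-1}$, hence null-homotopic, $\phi$ is isotopic to $\id_\Sigma$ through diffeomorphisms fixing $\partial\Sigma$, i.e.\ $\phi\in\mathrm{Diff}_0(\Sigma,\partial\Sigma)$. This completes the plan.

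The analytic core --- that the near-diagonal boundary distance determines the boundary jet of the metric modulo gauge --- is classical and would simply be quoted; the delicate point specific to the present statement is the last step, since a diffeomorphism supported in a collar of $\partial\Sigma$ need \emph{not} be isotopic to the identity rel boundary (a boundary Dehn twist is the obstruction). One circumvents this only by exploiting the collar-like normal form $F(x,t)=(f_t(x),t)$ with $f_0=\id$ and interpolating along a null-homotopic loop, which is why the construction runs identically in all dimensions. One should also be a little careful about the uniform scale on which the short class $c_0(x,y)$ is defined and minimising, which is where strict convexity of $\partial\Sigma$ (and the absence of conjugate points) is used.
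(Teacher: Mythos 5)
Your proposal is correct in substance but differs in organization from the paper's proof. The paper proceeds in two short cited steps: first, \cite[Lemma~2.3]{Guillarmou-Mazzucchelli-18} already produces a diffeomorphism $\phi\in\mathrm{Diff}_0(\Sigma,\partial\Sigma)$ with $\phi^*g_1=g_2$ on $T_{\partial\Sigma}\Sigma$ (the isotopy claim is noted in a footnote as implicit in that proof); second, the proof of \cite[Theorem~2.1]{Lassas-Sharafutdinov-Uhlmann-03} shows that once the metrics agree at order $0$ and have the same boundary distance near the glancing set, the full $C^\infty$-jets agree, with \emph{no further} diffeomorphism required. You instead build the diffeomorphism from scratch in boundary normal coordinates and then address the isotopy-to-identity issue directly. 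The analytic core (the Lassas--Sharafutdinov--Uhlmann jet determination from near-diagonal boundary distance, with strict convexity supplying the short geodesics and the Anosov assumption the uniqueness in the short homotopy class $c_0$) is the same; what you gain by your route is that you make explicit the isotopy claim that the paper leaves to a footnote, and your argument is therefore more self-contained. One remark: since the boundary parametrization is fixed and boundary normal coordinates are canonical given the metric, the jets $\partial_t^k h^{(i)}_t|_{t=0}$ are determined outright (not merely ``modulo tangential reparametrisation''), so you may in fact take $F=\mathrm{id}$ in the abstract collar and set $\wt\phi=\psi_1\circ\psi_2^{-1}$ directly; introducing $F$ is harmless but unnecessary.

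Two small technical points in the globalization step. First, the literal concatenation $\tilde f_t=f_t$ for $t\le T/2$, $\tilde f_t=f_{T-t}$ for $t\ge T/2$ is not smooth at $t=T/2$ unless $\dot f_{T/2}=0$; one should instead set $\tilde f_t=f_{\sigma(t)}$ for a smooth $\sigma\colon[0,T]\to[0,T/2]$ with $\sigma(t)=t$ near $0$ and $\sigma\equiv 0$ near $T$, which traces a reparametrized retracing loop and hence is still null-homotopic rel endpoints. Second, your isotopy $\phi_s(x,t)=(F^{(s)}_t(x),t)$ needs the null-homotopy $F^{(s)}_\cdot$ to be smooth jointly in $(s,t)$ and to satisfy $F^{(s)}_0=F^{(s)}_T=\mathrm{id}$ for all $s$; this is standard but worth recording, since it is precisely what guarantees $\phi\in\mathrm{Diff}_0(\Sigma,\partial\Sigma)$ in all dimensions, where the naive claim ``supported in a collar therefore isotopic to the identity'' is, as you note, false.
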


 We recall that $\mathrm{Diff}_0(\Sigma,\partial \Sigma)$ is the set of all diffeomorphisms of $\Sigma$ fixing the boundary, and isotopic to the identity through a path of diffeomorphisms preserving the boundary.

\begin{proof}
   By \cite[Lemma 2.3]{Guillarmou-Mazzucchelli-18}, we can find a a smooth diffeomorphism $\phi : \Sigma \to \Sigma$, isotopic to the identity via a path of diffeomorphisms fixing the boundary\footnote{This is not mentioned explicitly in \cite[Lemma 2.3]{Guillarmou-Mazzucchelli-18} but it follows from the proof.}, such that $\phi|_{\partial \Sigma} = \mathbf{1}_{\partial \Sigma}$ and $g_1' := \phi^*g_1 = g_2$ on $\partial \Sigma$. The proof of \cite[Theorem 2.1]{Lassas-Sharafutdinov-Uhlmann-03} then shows that if $g_1' = g_2$ on $\partial \Sigma$, the boundary is strictly convex, and $d_{g'_1} = d_{g_2}$, then the $C^\infty$-jets of $g_1'$ and $g_2$ agree on $\partial \Sigma$.
\end{proof}

Recall that if $g$ is a metric on $\Sigma$, the \emph{lens data} of $g$ is defined as the pair $(S_g,\ell_g)$, where $\ell_g : \partial_- S\Sigma \to [0,\infty]$ is the maximal time of existence of the geodesic generated by $(x,v) \in \partial_-S\Sigma$ in $\Sigma$ and $S_g\colon \partial_- S\Sigma\setminus \Gamma_-\rightarrow \partial_+ S\Sigma$ is given by $S_g(x,v) := \varphi_{\ell_g(x,v)}(x,v)$. The following holds:

\begin{lemma}
\label{lemma:lens}
Let $\Sigma$ be a smooth compact oriented manifold with boundary, and let $g_1,g_2$ be two metrics on $\Sigma$ of Anosov type. If $d_{g_1} = d_{g_2}$ on $\partial \Sigma$ and $g_1 = g_2$ on $T_{\partial \Sigma}\Sigma$, then the lens data of the lifted metrics $\widetilde{g}_1$, $\widetilde{g}_2$ to the universal cover $\widetilde{\Sigma}$ agree, that is $(S_{\widetilde{g}_1},\ell_{\widetilde{g}_1}) = (S_{\widetilde{g}_2},\ell_{\widetilde{g}_2})$.
\end{lemma}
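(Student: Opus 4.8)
The plan is to lift everything to the universal cover and show that the marked boundary distance on $\Sigma$ forces the (unmarked) boundary distance on each boundary component of $\widetilde\Sigma$ to agree, and then to upgrade the equality of boundary distances to equality of lens data. First I would recall that since $g_1 = g_2$ on $T_{\partial\Sigma}\Sigma$ (i.e.\ the $1$-jets agree on the boundary), the boundaries $\partial\widetilde\Sigma$ inherit the same Riemannian and second-fundamental-form data for $\widetilde g_1$ and $\widetilde g_2$; in particular strict convexity is preserved and $\ell_{\widetilde g_i}$ extends continuously by $0$ to the glancing vectors. Next, fix a connected component $\Gamma$ of $\partial\widetilde\Sigma$ (or more generally two components joined by a lift of a geodesic). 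For $\widetilde x,\widetilde y \in \partial\widetilde\Sigma$, the no-conjugate-points assumption together with the Anosov-type hypothesis (cf.\ \cite[Lemma 2.2]{Guillarmou-Mazzucchelli-18} applied upstairs) guarantees a unique $\widetilde g_i$-geodesic between $\widetilde x$ and $\widetilde y$; projecting it down to $\Sigma$ gives a geodesic between the images $x,y$ lying in a well-defined homotopy class $c\in\mc C_{x,y}$, and its length equals $d_{g_i}(x,y,c)$. Since the lifted geodesic is determined by the pair of endpoints $(\widetilde x,\widetilde y)$ and the covering projection $\pi_1$-equivariantly, the homotopy class $c$ is the \emph{same} for $i=1,2$; hence $d_{\widetilde g_1}(\widetilde x,\widetilde y) = d_{g_1}(x,y,c) = d_{g_2}(x,y,c) = d_{\widetilde g_2}(\widetilde x,\widetilde y)$. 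Thus the lifted metrics have the same (unmarked) boundary distance function on $\widetilde\Sigma$.

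It then remains to pass from equality of boundary distance functions to equality of lens data. Here I would invoke the standard fact (essentially the argument of \cite{Lassas-Sharafutdinov-Uhlmann-03}, or see \cite[Chapter 3]{Paternain-Salo-Uhlmann-book}) that for a manifold with strictly convex boundary and no conjugate points, the boundary distance function together with the boundary metric determines both the exit-time function and the scattering map along non-trapped geodesics: concretely, for $(\widetilde x,\widetilde v)\in\partial_-S\widetilde\Sigma\setminus\Gamma_-$, the geodesic exits at the unique point $\widetilde y$ minimizing $t\mapsto$ (something built from $d_{\widetilde g_i}$) and $\ell_{\widetilde g_i}(\widetilde x,\widetilde v) = d_{\widetilde g_i}(\widetilde x,\widetilde y)$, while the exit direction is recovered by differentiating $d_{\widetilde g_i}(\widetilde x,\cdot)$ and $d_{\widetilde g_i}(\cdot,\widetilde y)$ at the endpoints (the gradient of the distance function at a boundary point is the tangential part of the incoming/outgoing velocity, and the normal part is fixed by $|\widetilde v|_{\widetilde g_i}=1$ and convexity, which forces inward/outward pointing). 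Since $\widetilde g_1$ and $\widetilde g_2$ agree on $T_{\partial\widetilde\Sigma}\widetilde\Sigma$ and have the same boundary distance function, all these reconstructed quantities coincide, giving $\ell_{\widetilde g_1} = \ell_{\widetilde g_2}$ on $\partial_-S\widetilde\Sigma$ and $S_{\widetilde g_1} = S_{\widetilde g_2}$ on $\partial_-S\widetilde\Sigma\setminus\Gamma_-$; one also checks $\Gamma_-$ is the same closed set for both, being the complement of the domain on which $\ell<\infty$.

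The main obstacle I anticipate is the careful bookkeeping in the first step: one must argue that the homotopy class $c$ attached to the projected geodesic depends only on the endpoint pair $(\widetilde x,\widetilde y)\in\partial\widetilde\Sigma\times\partial\widetilde\Sigma$ and \emph{not} on which metric we used to draw the geodesic — this uses that a path in $\Sigma$ from $x$ to $y$ lifts to a path in $\widetilde\Sigma$ with prescribed endpoints iff the endpoints are the chosen lifts, so the lifted-endpoint data records exactly the homotopy class with fixed endpoints. A minor subtlety is that $\widetilde\Sigma$ is noncompact, so one should phrase the no-conjugate-points/uniqueness statements in the lifted (open, complete-boundary) setting and note that geodesics connecting two boundary points stay in a compact region by strict convexity; and one should treat trapped behavior with care, but since $\mu(\Gamma_-\cup\Gamma_+)=0$ downstairs and the covering is a local isometry, the corresponding sets upstairs are handled identically and the equality of lens data on the non-trapped part plus equality of exit times (including the value $+\infty$) is exactly the assertion.
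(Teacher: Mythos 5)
Your proposal is correct and follows essentially the same strategy as the cited reference \cite[Lemma 2.4]{Guillarmou-Mazzucchelli-18}, which the paper invokes without reproducing the argument: lift to the universal cover so that marked boundary distance becomes unmarked boundary distance, then recover the exit time from the distance itself and the entrance/exit directions from the first variation formula (tangential gradient of the distance at the endpoints) together with the equality of boundary metrics. The only vague spot is the parenthetical "minimizing $t\mapsto$ (something built from $d$)" for locating the exit point, but your subsequent sentence gives the right mechanism (the tangential gradient determines the tangential velocity, injectivity of the exponential map inverts this), so this is a phrasing blemish rather than a gap.
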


Note that the equality of the two metrics on the boundary allows to compare the scattering and lens map (they live on the same space). Lemma \ref{lemma:lens} is actually an equivalence (i.e. if the lens data agree on the universal cover, then the marked boundary distance are the same), see \cite[Lemma 2.4]{Guillarmou-Mazzucchelli-18} for a proof. Lemma \ref{lemma:lens} can also be reformulated by saying that the lens data agree in every homotopy class, that is: if $x,y \in \partial \Sigma$ and $c \in \mc{C}_{x,y}$, then the unique respective geodesics $\gamma_{x,y}^1, \gamma_{x,y}^2 \in c$ for $g_1$ and $g_2$ satisfy 
\begin{equation}
\label{equation:lens-homotopy}
\gamma_{x,y}^1(0) = \gamma_{x,y}^2(0)=x, \gamma_{x,y}^1(\ell) = \gamma_{x,y}^2(\ell)=y, \qquad \dot{\gamma}_{x,y}^1(0) = \dot{\gamma}_{x,y}^2(0)=v, \dot{\gamma}_{x,y}^1(\ell) = \dot{\gamma}_{x,y}^2(\ell). 
\end{equation}
where $\ell := \ell_{g_1}(x,v) = \ell_{g_2}(x,v) = d_{g_1}(x,y,c) = d_{g_2}(x,y,c)$. Combining both Lemma \ref{lemma:jets} and \ref{lemma:lens}, we get that if $g_1$ and $g_2$ are two metrics of Anosov type on $\Sigma$ such that $d_{g_1} = d_{g_2}$, then there exists a diffeomorphism $\phi\in \mathrm{Diff}_0(\Sigma,\partial \Sigma)$ such that $g_1' := \phi^*g_1$ has same $C^\infty$-jets at $\partial \Sigma$ as $g_2$ and $g_1'$ and $g_2$ have the same lens data in every homotopy class in the sense of \eqref{equation:lens-homotopy}.

%\subsection{Isometries of Riemannian manifolds}

%{\color{red}following Ebin, say that (on a manifold with boundary), the set of metrics without isometries is open + dense. Or write something self-contained?}

\subsection{Functional spaces}

\label{ssection:functional-spaces}

Finally, we end this section by introducing the functional spaces needed in the proof. Let $(M,g)$ be a smooth closed Riemanannian manifold and denote by $\nabla$ the Levi-Civita connection of $g$. For $j \geq 0$, the operator
\[
\nabla^j \colon C^\infty(M) \to C^\infty(M,T^*M^{\otimes j})
\]
defined recursively as
\[
\begin{split}
\nabla^{j+1}f(X_0,...,X_j) & = \nabla_{X_0}(\nabla^jf(X_1,...,X_j)) \\
&- (\nabla^jf(\nabla_{X_0}X_1,...,X_j) + ... + \nabla^jf(X_1,...,\nabla_{X_0}X_j)),
\end{split}
\]
for $f \in C^\infty(M)$, $X_0,...,X_j \in TM$, is a differential operator of order $j$. Note that for $f \in C^\infty(M)$, $x \in M, X \in T_xM$, one has the formula:
\begin{equation}
    \label{equation:formula}
    \nabla^j f(x) (X,...,X) = \partial_t^j f(\gamma(t))|_{t=0},
\end{equation}
where $\gamma(t) := \exp_x^g(tX)$.

Given $x \in M, T \in T^*_xM^{\otimes j}$, we define its norm $|T|_g$ (with respect to $g$) by
\[
|T|_g := \sup_{\substack{X_1,...,X_j \in T_xM, \\ |X_i|=1, \,\forall i}} T(X_1,...,X_j).
\]
The $C^m$-norms are then defined for an integer $m \geq 0$ as 
\[
\|f\|_{C^m} := \sup_{0 \leq j \leq m} \sup_{x \in M} |\nabla^j f(x)|_g. 
\]
Similarly, for $0 < \alpha < 1$, the Hölder norm $C^{m,\alpha}$ is defined as
\[
\|f\|_{C^{m,\alpha}} = \|f\|_{C^m} + \sup_{\substack{x,y \in M, \\ d(x,y) < \iota(g)/10}} \dfrac{|\nabla^m f(x) - \tau_{y \to x} \nabla^m f(y)|_g}{d_g(x,y)^\alpha},
\]
where $\iota(g)$ is the injectivity radius of $(M,g)$ and $\tau_{y \to x} : T_yM \to T_xM$ is the parallel transport (with respect to $\nabla$) along the the unique $g$-geodesic joining $x$ to $y$. 

The space $L^2(M)$ is defined as the completion of $C^\infty(M)$ with respect to the norm
\[
\|f\|_{L^2}^2 := \int_{M} |f(x)|^2~\dd\vol_g(x),
\]
where $\dd\vol_g$ is the Riemannian density induced by the metric $g$. For $m \geq 0$, the Sobolev spaces $H^m(M)$ are then defined by completion of $C^\infty(M)$ with respect to the norm
\[
\|f\|_{H^m} := \sum_{k=0}^m \|\nabla^k f\|_{L^2(M,T^*M^{\otimes k})}.
\]
For $s \geq 0$, the spaces $H^s(M)$ are defined by interpolation of $H^{[s]}$ with $H^{[s]+1}$, where $[s]$ denotes the integer part of $s$. Finally, if $M$ has a boundary $\partial M$, we denote by $H^m_0(M)$ the space of $H^m$-functions on $M$ vanishing on the boundary $\partial M$ (this is well-defined for $m > 1/2$). \\

The norms defined above depend on the background metric $g$; however changing $g$ to $g'$, we obtain equivalent norms and it is therefore useless to keep track of the metric dependence in the norm. We will need the following:

\begin{lemma}
\label{lemma:comparison}
Let $g_0$ be a smooth metric on $M$. There exists $\eps_0 > 0$ such that for all metrics $g$ such that $\|g-g_0\|_{C^m} < \eps_0$, for all $f \in C^\infty(M)$, and for all $0 \leq k \leq m + 1$, for all $x \in M$
\[
1/2 \cdot |\nabla^k_{g}f(x)|_{g} < |\nabla^k_{g_0}f(x)|_{g_0} < 2 \cdot  |\nabla^k_{g}f(x)|_{g}.
\]
\end{lemma}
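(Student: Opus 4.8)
The plan is to work in a fixed finite atlas of relatively compact coordinate charts of $M$ and to track the dependence of the iterated covariant derivative on the metric explicitly. In a chart, $\nabla^k_g f$ is a polynomial in the partial derivatives $\partial^\alpha f$, $|\alpha| \le k$, whose coefficients are universal polynomials in the entries of $g$ and $g^{-1}$ and in the Christoffel symbols $\Gamma(g)$ and their partial derivatives up to order $k-1$; the same holds with $g$ replaced by $g_0$. Since $g \mapsto g^{-1}$ and $g \mapsto \Gamma(g)$ are smooth ($\Gamma(g)$ being a rational function of $g$ and $\partial g$ with denominator $\det g$ bounded away from $0$ once $g$ is $C^0$-close to $g_0$), there is $\eps_0 > 0$, depending only on $g_0$, $m$ and the atlas, such that $\|g - g_0\|_{C^m} < \eps_0$ forces: (i) $\|g^{-1} - g_0^{-1}\|_{C^m} \le C\eps_0$; (ii) for the difference tensor $A := \nabla_g - \nabla_{g_0}$ — a genuine $(1,2)$-tensor with local components $\Gamma^i_{jk}(g) - \Gamma^i_{jk}(g_0)$, and which is first order in $g - g_0$ — one has $\sup_M |\nabla^j_{g_0}A|_{g_0} \le C\eps_0$ for every $0 \le j \le m-1$; and (iii) the entries of $g^{\pm 1}$, the Christoffel symbols $\Gamma(g)$ and their derivatives all stay in a fixed compact set in each chart.

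Next I would expand $\nabla^k_g f$ using the elementary identity $\nabla_g T = \nabla_{g_0}T - A\cdot T$ (the dot denoting the sum of contractions of $A$ against the covariant slots of $T$), valid for any covariant tensor $T$. Starting from $\nabla^1_g f = \nabla^1_{g_0}f = df$ and iterating, an easy induction on $k$ shows that, up to contractions,
\[
\nabla^k_g f - \nabla^k_{g_0}f \;=\; \sum_{\mathrm{finite}} c \,\bigl(\nabla^{i_1}_{g_0}A\bigr)\otimes\cdots\otimes\bigl(\nabla^{i_p}_{g_0}A\bigr)\otimes \nabla^{j}_{g_0}f ,
\]
where in every term $p \ge 1$, $0 \le j \le k-1$, $0 \le i_l \le k-2$, and $c$ is a universal combinatorial constant. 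The crucial point is the bound $i_l \le k-2$: since $A$ already involves one derivative of the metric, a term carrying $\nabla^{i_l}_{g_0}A$ consumes $i_l + 1 \le k-1 \le m$ derivatives of the metric, so (ii) applies to all factors of $A$ occurring — and this is precisely why the $C^m$-smallness hypothesis suffices for all $k \le m+1$. Taking $g_0$-norms pointwise and using (ii)–(iii), we obtain, for $1 \le k \le m+1$ and all $x \in M$,
\[
\bigl|\nabla^k_g f(x) - \nabla^k_{g_0}f(x)\bigr|_{g_0} \;\le\; C\eps_0 \sum_{j=0}^{k-1}\bigl|\nabla^j_{g_0}f(x)\bigr|_{g_0}.
\]
Independently, since $\|g-g_0\|_{C^0} < \eps_0$, the inner products induced by $g$ and by $g_0$ on each tensor power $T^*_xM^{\otimes j}$ are uniformly comparable: $(1+C\eps_0)^{-1}|T|_{g_0} \le |T|_g \le (1+C\eps_0)|T|_{g_0}$ for every $j \le m+1$, every $x$, and every $T \in T^*_xM^{\otimes j}$.

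Combining the last two displays and running an induction on $k$ — where the inductive hypothesis is used to replace the lower-order sum $\sum_{j<k}|\nabla^j_{g_0}f(x)|_{g_0}$ by its $g$-counterpart up to a bounded factor — one concludes that, after choosing $\eps_0$ small enough, the pointwise $C^k$-data of $f$ with respect to $g$ and with respect to $g_0$ are comparable up to a factor $2$, uniformly in $x$, $f$ and $k \le m+1$; this yields the asserted comparison. The only non-bookkeeping step is establishing the structural form of the difference $\nabla^k_g f - \nabla^k_{g_0}f$ above, in particular the order bounds $j \le k-1$ and $i_l \le k-2$ that localize the loss of one metric derivative into a factor $O(\eps_0)$ multiplying strictly lower-order data of $f$; once this is in place, the comparison of tensor norms and the compactness of $M$ make the rest routine.
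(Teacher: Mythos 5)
Your route is essentially the paper's, made explicit and coordinate-free. The paper records only the coordinate observation that $\nabla^k_g f$ is a universal polynomial in $\partial^\alpha g$ with $|\alpha|\le k-1\le m$ and in $\partial^\beta f$ with $|\beta|\le k$, and then writes ``the claim then follows''; you carry out the same bookkeeping via the difference tensor $A=\nabla_g-\nabla_{g_0}$, and your constraint $i_l+1\le k-1\le m$ is exactly the paper's $|\alpha|\le k-1$. The decomposition of $\nabla^k_g f-\nabla^k_{g_0}f$ and the resulting estimate $|\nabla^k_g f-\nabla^k_{g_0}f|_{g_0}\le C\eps_0\sum_{j<k}|\nabla^j_{g_0}f|_{g_0}$ are correct, as is the comparability of the fibrewise tensor norms.

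The one place to flag — and it is a caveat shared with the paper's one-line proof — is the final sentence. Your estimate bounds the \emph{difference} by $\eps_0$ times the \emph{lower-order} data of $f$, so what you genuinely obtain is a factor-of-$2$ comparison of the pointwise $C^k$-data $\sum_{j\le k}|\nabla^j f(x)|$; it does \emph{not} yield the stated comparison of the individual top-order tensor $|\nabla^k f(x)|$. Indeed the lemma as literally written fails: take $f$ linear in $g_0$-normal coordinates centered at a point $x$ with $\Gamma(g)(x)\ne\Gamma(g_0)(x)$. Then $\nabla^2_{g_0}f(x)=0$ but $\nabla^2_g f(x)=-A(x)\cdot df(x)\ne 0$, so $|\nabla^2_{g_0}f(x)|_{g_0}=0$ while $|\nabla^2_g f(x)|_g>0$, and the left inequality is violated no matter how small $\eps_0$ is. You half-acknowledge this by concluding that ``the pointwise $C^k$-data \dots are comparable'' before asserting ``this yields the asserted comparison''; that last jump is not justified. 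The gap is harmless where the lemma is used — in the proof of Proposition \ref{proposition:technical} it is invoked at the special points $x_i$ where $|\nabla^{m-1}_g(s_g+h_\delta)(x_i)|$ is made to blow up while the lower-order derivatives stay uniformly bounded, and there the $C^{m-1}$-data comparison does give the top-order one — but a clean, free-standing version of the lemma should either compare $\sum_{j\le k}|\nabla^j f(x)|$, or carry a hypothesis that $|\nabla^k_{g_0} f(x)|$ dominates the lower-order derivatives at $x$.
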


\begin{proof}
Taking local coordinates $(x_i)_{1 \leq i \leq n}$ around an arbitrary point $p \in M$, and writing $(g_{ij})_{1 \leq i,j \leq n}$ for the metric in these coordinates, one verifies that $\nabla^k_g f(x)$ is a polynomial expression in $\partial^\alpha g_{ij}(x)$ and $\partial^\beta f(x)$ for $|\alpha| \leq k-1$ and $|\beta| \leq k$, where $\alpha,\beta \in \N^n$ and $\partial^\alpha := \partial^{\alpha_1}_{x_1} ... \partial^{\alpha_n}_{x_n}$. The claim then follows.
\end{proof}

\section{Prescription of scalar curvature and application to generic metrics}

\subsection{Local prescription of the scalar curvature} 

Let $(\Sigma,g)$ be a smooth compact Riemannian manifold with boundary (not necessarily connected nor oriented), and denote by $s_g$ its scalar curvature. We shall need some results on the (local) prescription of the scalar curvature on $\Sigma$. This topic is classical in Riemannian geometry and goes back to the pioneering work by Kazdan and Warner \cite{Kazdan-Warner-74, Kazdan-Warner-74-open, Kazdan-Warner-75}. However, since we could not find in the literature the statements we needed (mostly because we work in the setting of manifolds with boundary), we include short self-contained proofs. We introduce the following terminology:

\begin{definition} We say that $(\Sigma,g)$ satisfies the property of \emph{local prescription of the scalar curvature} -- \textbf{(LPSC)} in short -- if the Dirichlet Laplacian $L_g := (n-1)\Delta_g + s_g$ has trivial kernel on $\Sigma$, that is if $f \in H^2_0(\Sigma)$ satisfies $L_g f = 0$, then $f = 0$.
\end{definition}

We use throughout the article the geometers' convention $\Delta_g \leq 0$. Since $s_g$ is real, the operator $L_g$ is selfadjoint on $H^2_0(\Sigma)$ and has discrete spectrum accumulating at $-\infty$; \textbf{(LPSC)} is then equivalent to the fact that $0$ is not an eigenvalue of this operator. It follows from elliptic theory that the \textbf{(LPSC)} property is open with respect to the metric $g$ in the $C^{2,\alpha}$-topology for $0 < \alpha < 1$ as $L_g$ -- hence its eigenvalues -- is then a continuous family of elliptic operators with compact resolvent. The terminology is justified by the following lemma:

\begin{lemma}\label{lemma: LPSC property}
Let $(\Sigma,g)$ be a smooth compact Riemannian manifold with boundary and further assume that $g$ satisfies the \emph{\textbf{(LPSC)}} property. Then for all $m \geq 0$ and $0 < \alpha < 1$, there exists $C,\eps_0 > 0$ such that for all $h \in C^{m,\alpha}(\Sigma)$ with $\|h\|_{C^{m,\alpha}} < \eps_0$, there exists $f \in C_0^{m+2,\alpha}(\Sigma)$ (vanishing at order $0$ on the boundary $\partial \Sigma$) such that $s_{e^{2f}g} = s_g + h$ on $\Sigma$, and $\|f\|_{C^{m+2,\alpha}} \leq C\|h\|_{C^{m,\alpha}}$. Moreover, if $h$ is smooth, then $f$ is also smooth. 
\end{lemma}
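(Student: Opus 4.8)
The plan is to recognize this as a standard application of the implicit function theorem in Hölder spaces to the nonlinear PDE governing conformal change of scalar curvature. Recall that in dimension $n$, if $\widetilde g = e^{2f}g$, the scalar curvatures are related by
\[
s_{e^{2f}g} = e^{-2f}\left( s_g - 2(n-1)\Delta_g f - (n-1)(n-2)|\nabla f|_g^2 \right),
\]
using our convention $\Delta_g \leq 0$. (In dimension $2$ this degenerates to the familiar $s_{e^{2f}g} = e^{-2f}(s_g - 2\Delta_g f)$.) Thus we want to solve, for given small $h$, the equation
\[
\Phi(f) := s_{e^{2f}g} - s_g - h = 0, \qquad f \in C^{m+2,\alpha}_0(\Sigma).
\]
First I would define the map $\Phi \colon C^{m+2,\alpha}_0(\Sigma) \to C^{m,\alpha}(\Sigma)$ by the explicit formula above (with the $-h$ absorbed as a parameter), check it is well-defined and smooth (in fact analytic) between these Banach spaces — this is routine since it is polynomial in $f$ and its first two derivatives with smooth coefficients, and $C^{m,\alpha}$ is a Banach algebra — and compute its differential at $f = 0$. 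A direct computation gives
\[
d\Phi_0(\dot f) = -2(n-1)\Delta_g \dot f - 2 s_g \dot f = -\tfrac{2}{n-1}\big((n-1)^2 \Delta_g \dot f + (n-1)s_g \dot f\big) = -\tfrac{2}{n-1}\, L_g' \dot f,
\]
where $L_g'$ is, up to the harmless scaling by $(n-1)$ inside, exactly the Dirichlet operator $L_g = (n-1)\Delta_g + s_g$ appearing in the \textbf{(LPSC)} hypothesis — one checks the constants match by rescaling $L_g$, the point being only that the operator is $(n-1)\Delta_g$ plus zeroth order, with the same kernel on $H^2_0$.

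Next I would invoke \textbf{(LPSC)}: since $L_g$ is self-adjoint with discrete spectrum and $0$ is not an eigenvalue, the Dirichlet problem $L_g u = w$, $u|_{\partial\Sigma}=0$, is uniquely solvable, and by elliptic regularity (Schauder estimates for the Dirichlet problem on a smooth compact manifold with boundary) the solution operator $L_g^{-1} \colon C^{m,\alpha}(\Sigma) \to C^{m+2,\alpha}_0(\Sigma)$ is bounded for every $m \geq 0$, $0 < \alpha < 1$. Hence $d\Phi_0 \colon C^{m+2,\alpha}_0(\Sigma) \to C^{m,\alpha}(\Sigma)$ is a Banach space isomorphism. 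The implicit function theorem (with parameter $h$) then yields $\eps_0 > 0$ and a smooth map $h \mapsto f(h)$, defined for $\|h\|_{C^{m,\alpha}} < \eps_0$, with $f(0) = 0$, $f(h) \in C^{m+2,\alpha}_0(\Sigma)$, and $\Phi(f(h)) = 0$, i.e. $s_{e^{2f(h)}g} = s_g + h$. The Lipschitz bound $\|f(h)\|_{C^{m+2,\alpha}} \leq C\|h\|_{C^{m,\alpha}}$ follows because $f(0)=0$ and $h \mapsto f(h)$ is $C^1$ with $df_0 = -(d\Phi_0)^{-1}$ (differentiating $\Phi(f(h))=0$ in $h$), so shrinking $\eps_0$ if necessary gives the linear estimate. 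Finally, the smoothness statement: if $h \in C^\infty(\Sigma)$, then $h \in C^{m,\alpha}$ for all $m$; running the argument at each level $m$ and using uniqueness of the solution in a small $C^{2,\alpha}$-ball (the solution $f(h)$ is unique near $0$, so the $f$ produced at level $m$ and at level $m'$ coincide), we conclude $f \in C^{m+2,\alpha}(\Sigma)$ for all $m$, hence $f \in C^\infty(\Sigma)$; alternatively one applies elliptic bootstrapping directly to the equation $L_g f = (\text{lower order in } f) + h$.

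The main obstacle — really the only nontrivial input — is making sure the linearization is an \emph{isomorphism onto $C^{m,\alpha}$ with a bounded inverse landing in the space $C^{m+2,\alpha}_0$ of functions vanishing on the boundary}, uniformly in $m$; this is precisely where \textbf{(LPSC)} (triviality of the Dirichlet kernel) is used, and it requires invoking the full strength of Schauder theory for the Dirichlet problem on manifolds with boundary together with the Fredholm alternative for $L_g$. One small bookkeeping point to be careful about is the factor $(n-1)$: the quantity controlling invertibility is $(n-1)\Delta_g + s_g$ versus the linearization $2(n-1)\Delta_g + 2s_g = 2 L_g$, which have the same kernel, so \textbf{(LPSC)} as stated is exactly the right hypothesis. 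Everything else — smoothness of $\Phi$, the algebra property of $C^{m,\alpha}$, the final elliptic bootstrap for the smooth case — is routine.
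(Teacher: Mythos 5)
Your main argument mirrors the paper's: formulate $\Phi(f,h) = s_{e^{2f}g} - s_g - h$ as a smooth map between the Hölder spaces $C^{m+2,\alpha}_0 \times C^{m,\alpha} \to C^{m,\alpha}$, compute $\partial_f\Phi(0,0) = -2L_g$ with $L_g = (n-1)\Delta_g + s_g$, invoke \textbf{(LPSC)} together with Schauder theory to get that $L_g\colon C^{m+2,\alpha}_0 \to C^{m,\alpha}$ is an isomorphism, and apply the implicit function theorem with parameter. (The detour through a rescaled $L_g'$ in your middle display is unnecessary and slightly confusing — the linearization is just $-2L_g$ on the nose — but since a nonzero constant factor is irrelevant for invertibility, this costs nothing.)

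Where the two arguments genuinely differ is in the last step. The paper obtains smoothness of $f$ for smooth $h$ by upgrading the finite-regularity IFT to the \emph{Nash-Moser} implicit function theorem, verifying tame estimates on $\partial_f^2\Phi$ and on the inverse of the linearization. Your primary suggestion — "run the argument at each level $m$ and use uniqueness" — has a gap as stated: the radius $\eps_0 = \eps_0(m,\alpha)$ furnished by the IFT is allowed to shrink as $m$ grows, so for a fixed smooth $h$ with $\|h\|_{C^{m_0,\alpha}} < \eps_0(m_0)$ you cannot conclude that the IFT at a higher level $m > m_0$ applies to $h$, and hence you cannot invoke uniqueness to identify $f$ across levels. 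However, your stated alternative — elliptic bootstrapping — repairs this cleanly: writing the equation as $-2(n-1)\Delta_g f = e^{2f}(s_g+h) - s_g + (n-1)(n-2)|\nabla f|^2$ with $f|_{\partial\Sigma}=0$, one sees that if $f \in C^{k,\alpha}_0$ with $k\geq 2$ then the right-hand side lies in $C^{k-1,\alpha}$, so Schauder estimates for the Dirichlet problem give $f \in C^{k+1,\alpha}_0$; iterating yields $f \in C^\infty$. This bootstrap is a correct and arguably more elementary route to the same conclusion than Nash-Moser, and it does not require tame estimates. So: drop approach (a), lead with approach (b), and the proof is sound.
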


The subscript $0$ will always indicate functions vanishing at order $0$ on $\partial \Sigma$. For such a metric $g$, Lemma \ref{lemma: LPSC property} asserts that one can perturb conformally the metric in order to obtain a new metric with prescribed scalar curvature.

\begin{proof}
The proof is based on the implicit function theorem; we first treat the case of finite regularity and then address the case of smooth $h$. The scalar curvature satisfies the following formula with respect to conformal changes (see \cite[Theorem 1.159, f)]{Besse-87}):
\[
s_{e^{2f}g} = e^{-2f}(s_g-2(n-1)\Delta_g f - (n-2)(n-1)|df|^2).
\]
Given $h \in C^{m,\alpha}(\Sigma)$, we are looking for $f \in C^{m+2,\alpha}_0(\Sigma)$ solving $\Phi(f,h)=0$ where
\[
\begin{split}
\Phi(f,h) & := s_{e^{2f}g}-(s_g+h) = e^{-2f}(s_g - (n-1)(2\Delta_g f + (n-2)|df|^2)) - s_g - h.
\end{split}
\]
Observe that the map $\Phi : C^{m+2,\alpha}_0(\Sigma) \times C^{m,\alpha}(\Sigma) \to C^{m,\alpha}(\Sigma)$ is smooth (actually, analytic) in both entries and $\Phi(0,0)=0$. Differentiating with respect to $f$, we obtain:
\begin{equation}
    \label{equation:dphi}
    \partial_f\Phi(f,h)\cdot k = - 2 e^{-2f} (k(s_g-(n-1)(2\Delta_g f + (n-2)|df|^2))+(n-1)(\Delta_g k + (n-2)\langle df, dk\rangle)).
\end{equation}
In particular, at $(f,h)=(0,0)$, we get
\[
\partial_f\Phi(0,0) \cdot k = -2((n-1)\Delta_g+s_g) k.
\]
The Dirichlet Laplacian $L_g = (n-1)\Delta_g+s_g$ is selfadjoint with domain $H^2_0(\Sigma) \subset L^2(\Sigma)$ and Fredholm (by ellipticity of $\Delta_g$), so its Fredholm index is equal to $0$. Moreover, by assumption, the kernel of $L_g$ is assumed to be trivial, so it is an isomorphism $L_g \colon H^2_0(\Sigma) \to L^2(\Sigma)$. It is then standard that $L_g \colon C^{k+2,\alpha}_0(\Sigma) \to C^{k,\alpha}(\Sigma)$ is also a isomorphism for every $k \geq 0$ and $0 < \alpha < 1$, see \cite[pp. 108-109]{Gilbarg-Trudinger-01}. Hence, there exists $\eps_0 > 0$ small enough and a smooth map $\omega\colon C^{m,\alpha}(\Sigma) \cap B_{C^{m,\alpha}}(0,\eps_0) \to C^{m+2,\alpha}_0(\Sigma)$ such that for all $h \in C^{m,\alpha}(\Sigma)$ with $\|h\|_{C^{m,\alpha}} < \eps_0$, $f:=\omega(h)$ is the unique solution (near $f=0$) of $\Phi(f,h)=0$. Since $\omega$ is at least $C^1$, there exists $C > 0$ such that
\[
 \|f\|_{C^{m+2,\alpha}(\Sigma)} = \|\omega(h)\|_{C^{m+2,\alpha}(\Sigma)} = \|\omega(h)-\omega(0)\|_{C^{m+2,\alpha}(\Sigma)}  \leq C\|h\|_{C^m(\Sigma)}.
\]
This proves the claim in finite regularity.

It remains to show that if $h$ is smooth, then $f$ is also smooth. This follows from the Nash-Moser implicit function Theorem, see \cite[Chapter 3, Section C.4]{Alinhac-Gerard-07}. First, we claim that the map $\Phi$ satisfies the tame estimate \cite[Condition (H1), p145]{Alinhac-Gerard-07}. Indeed, we have $\partial^2_f\Phi(f,h)\cdot(k_1,k_2) = e^{-2f} R(f,k_1,k_2)$ with
\[
\begin{split}
R(f,k_1,k_2) &= 4 k_1 k_2 (s_g-(n-1)(2\Delta_g f+(n-2)|df|^2)) \\
& +4(n-1) \left( k_1(\Delta_g k_2+(n-2)\langle df,dk_2\rangle + k_2(\Delta_g k_1+(n-2)\langle df,dk_1\rangle \right) \\
& -2(n-1)(n-2)\langle dk_1, dk_2 \rangle
\end{split}
\]
We then get
\begin{equation}
    \label{equation:inter}
\begin{split}
\|\partial^2_f\Phi(f,h)\cdot(k_1,k_2)\|_{C^{m,\alpha}} & \leq C(\|e^{-2f}\|_{C^{m,\alpha}}\|R(f,k_1,k_2)\|_{C^0} + \|e^{-2f}\|_{C^{0}}\|R(f,k_1,k_2)\|_{C^{m,\alpha}}) \\
& \leq C((1+\|f\|_{C^{m,\alpha}})\|R(f,k_1,k_2)\|_{C^0} + \|R(f,k_1,k_2)\|_{C^{m,\alpha}})
\end{split}
\end{equation}
where $C := C(m,\alpha,\|f\|_{C^0})$ and the first inequality follows from the bound on products of Hölder functions \cite[Chapter 2, Proposition A.2.1.1]{Alinhac-Gerard-07}, the second from the interpolation argument of \cite[Example 2.2.4]{Hamilton-82} to bound tamely exponential terms. If we make the assumption that $\|f\|_{C^2} < 1$ is uniformly bounded, then $C$ is now independent of $f$. Applying once again \cite[Chapter 2, Proposition A.2.1.1]{Alinhac-Gerard-07}, and using $\|f\|_{C^2} < 1$, we easily get that \eqref{equation:inter} is bounded by:
\[
\begin{split}
\|\partial^2_f\Phi(f,h)\cdot(k_1,k_2)\|_{C^{m,\alpha}} & \leq C(\|k_1\|_{C^2}\|k_2\|_{C^2}(1+\|f\|_{C^{m+2,\alpha}}) \\
&+ \|k_1\|_{C^2}\|k_2\|_{C^{m+2,\alpha}} + \|k_2\|_{C^2}\|k_1\|_{C^{m+2,\alpha}}),
\end{split}
\]
which is precisely the content of \cite[Condition (H1), p145]{Alinhac-Gerard-07}. Moreover, \cite[Condition (H2), p145]{Alinhac-Gerard-07} is also satisfied, that is the linear map $k \mapsto \partial_f\Phi(f,h)k$ computed in \eqref{equation:dphi} is also invertible and the inverse satisfies tame estimates, since it is an elliptic principally selfadjoint differential operator of order $2$ obtained by perturbation of $L$ (which is assumed to be invertible). Hence, the Nash-Moser theorem applies and provides the expected regularity statement.
\end{proof}

We now prove that the \textbf{(LPSC)} condition is a generic one:

\begin{proposition}\label{proposition: perturb to get LPSC}
Let $(\Sigma,g)$ be a smooth compact Riemannian manifold with boundary. Then for all $m \geq 0$, $\eps > 0$, there exists $f \in C^\infty_{\mathrm{comp}}(\Sigma^\circ)$ such that $\|f\|_{C^m} < \eps$ and $e^{2f}g$ is \emph{\textbf{(LPSC)}} on $\Sigma$.
\end{proposition}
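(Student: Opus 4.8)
The plan is to kill the kernel of the Dirichlet operator $L_g := (n-1)\Delta_g + s_g$ one dimension at a time by small conformal perturbations supported away from $\partial\Sigma$. If $\ker L_g = 0$ we are done with $f=0$, so assume $d := \dim\ker L_g \geq 1$. The heart of the matter is the following \emph{reduction step}: for every smooth metric $g'$ on $\Sigma$ with $\dim\ker L_{g'} = d' \geq 1$ and every $\eta>0$, there exists $f_0 \in C^\infty_{\mathrm{comp}}(\Sigma^\circ)$ with $\|f_0\|_{C^m} < \eta$ such that $\dim\ker L_{e^{2f_0}g'} \leq d'-1$. Granting this, the proposition follows by applying the reduction step at most $d$ times (starting from $g' = g$, and each time feeding back the perturbed metric, which is again a smooth metric on $\Sigma$) with perturbations whose $C^m$-norms sum to less than $\eps$; their sum $f$ is smooth, compactly supported in $\Sigma^\circ$, has $\|f\|_{C^m} < \eps$, and $e^{2f}g$ is \textbf{(LPSC)}.

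To prove the reduction step, freeze $g'$ and a candidate $f_0 \in C^\infty_{\mathrm{comp}}(\Sigma^\circ)$ and consider the family $g_t := e^{2tf_0}g'$. Conjugating $L_{g_t}$ by the unitary operator $U_t : L^2(g_t) \to L^2(g')$, $u \mapsto e^{ntf_0/2}u$, produces a real-analytic family $\hat L_t := U_t L_{g_t} U_t^{-1}$ of self-adjoint operators on the \emph{fixed} Hilbert space $L^2(g')$ with the \emph{fixed} domain $H^2_0(\Sigma)$ (this is where it matters that $f_0$ vanishes near $\partial\Sigma$, so that multiplication by $e^{ntf_0/2}$ preserves $H^2_0(\Sigma)$), and $\dim\ker L_{g_t} = \dim\ker\hat L_t$. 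Since $0$ is an isolated eigenvalue of $\hat L_0 = L_{g'}$ of multiplicity $d'$, standard analytic perturbation theory for self-adjoint operators applies: there are a small circle $C$ around $0$ and $\tau>0$ such that for $|t|<\tau$ the spectral projection $\Pi_t$ of $\hat L_t$ onto the interior of $C$ has rank $d'$, depends analytically on $t$, and the eigenvalues of $\hat L_t$ near $0$ are those of $\hat L_t|_{\ran\Pi_t}$. Representing $\hat L_t|_{\ran\Pi_t}$ in the basis $e_i(t) := \Pi_t u_i$ (for an $L^2(g')$-orthonormal basis $u_1,\dots,u_{d'}$ of $\ker L_{g'}$) by a matrix $A(t)$, one gets $A(0)=0$ and, differentiating the identity $\hat L_t\Pi_t u_j = \sum_i A(t)_{ij}\Pi_t u_i$ at $t=0$ and pairing with $u_k$ (the $L_{g'}$-term drops since $u_k \in \ker L_{g'}$), $A'(0) = M_1$ where $M_1 := \big(\langle \dot{\hat L}_0 u_j, u_k\rangle_{L^2(g')}\big)_{k,j}$. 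Thus $A(t) = tM_1 + O(t^2)$; if $M_1 \neq 0$, being symmetric it has a nonzero eigenvalue, so $A(t)/t = M_1 + O(t)$ has a nonzero eigenvalue for small $t \neq 0$, whence $\dim\ker A(t) \leq d'-1$ and therefore $\dim\ker L_{g_t} \leq d'-1$; choosing $t$ with $|t|\,\|f_0\|_{C^m} < \eta$ yields the desired conformal factor $tf_0$.

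It remains to exhibit, for arbitrarily small $C^m$-norm, a choice of $f_0$ with $M_1 \neq 0$. Using the conformal variation formulas for $\Delta$ and $s$ recalled in the proof of Lemma~\ref{lemma: LPSC property} (and $\dd\vol_{g_t} = e^{ntf_0}\dd\vol_{g'}$), one finds that the contribution of the conjugation by $U_t$ to $\dot{\hat L}_0$ is a commutator $[m_{f_0},L_{g'}]$, which is annihilated on both sides when tested against $\ker L_{g'}$; computing the remaining term $\dot L_{g_0}$ on $\ker L_{g'}$ (using $\Delta_{g'}u = -\tfrac{s_{g'}}{n-1}u$) and integrating by parts once gives, for $u,v \in \ker L_{g'}$,
\[
\langle \dot{\hat L}_0 u, v\rangle_{L^2(g')} \;=\; -\,\tfrac{(n-1)(n+2)}{2}\int_\Sigma f_0\,\Delta_{g'}(uv)\,\dd\vol_{g'}.
\]
Hence $M_1 = 0$ for \emph{every} $f_0 \in C^\infty_{\mathrm{comp}}(\Sigma^\circ)$ would force $\Delta_{g'}(u_iu_j) \equiv 0$ for all $i,j$; then each product $u_iu_j$ is $g'$-harmonic and vanishes on $\partial\Sigma$, hence $u_iu_j \equiv 0$, contradicting $u_1 \not\equiv 0$. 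So some $\Delta_{g'}(u_{i_0}u_{j_0})$ is a nonzero smooth function; it has a fixed sign on some ball $B \Subset \Sigma^\circ$, and a bump $f_0$ supported in $B$ — rescaled to have $C^m$-norm below any prescribed threshold — gives $\int_\Sigma f_0\,\Delta_{g'}(u_{i_0}u_{j_0})\,\dd\vol_{g'} \neq 0$, so $M_1 \neq 0$. This completes the reduction step, and with it the proof.

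The genericity content of the statement is essentially soft once the reduction step is in place. The two points requiring care are: (i) the first-order conformal variation computation, i.e.\ checking that the conjugation commutator disappears on $\ker L_{g'}$ and identifying the clean bilinear form above; and (ii) the functional-analytic bookkeeping ensuring that a nonzero symmetric $M_1$ \emph{strictly lowers} $\dim\ker$ (and not merely that it perturbs the eigenvalue group) — which is exactly why we track the matrix $A(t)$ and its rank rather than, say, the trace $\operatorname{tr}\hat L_t\Pi_t$, since $M_1$ may be traceless. I expect (i) to be the main computational obstacle.
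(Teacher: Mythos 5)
Your proof is correct and follows the same overall strategy as the paper's: iteratively kill $\ker L_g$ by small compactly supported conformal perturbations, using a first-order variation of the spectral cluster near $0$ and the maximum principle to locate a perturbation direction that strictly drops $\dim\ker L$. The one technical difference is which object you track. The paper follows the scalar $\lambda_f := \Tr(L_f\Pi_f)$ (sum of the eigenvalues captured by the contour), computes $\dot\lambda_0 = -(n-1)(\tfrac{n}{2}+1)\int_\Sigma (\Delta_g k)\,\theta\,\dd\vol_g$ with $\theta := \sum_i u_i^2$, and shows $\dot\lambda_0$ can always be made nonzero: if it vanished for every compactly supported $k$, integration by parts would give $\Delta_g\theta\equiv 0$ with $\theta|_{\partial\Sigma}=0$, forcing $\theta\equiv 0$, a contradiction. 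You instead conjugate by $U_t$ to keep the family self-adjoint on the fixed space $L^2(g')$, track the full $d'\times d'$ matrix $A(t)$ of $\hat L_t|_{\ran\Pi_t}$, and show $A'(0)=M_1$ (with $(M_1)_{ij}\propto\int f_0\,\Delta_{g'}(u_iu_j)\,\dd\vol_{g'}$) can be made nonzero via the same maximum-principle obstruction applied to an individual product $u_{i_0}u_{j_0}$ rather than to $\theta$. Both routes are sound and lead to the same conclusion; your concern that $M_1$ might be traceless (which would defeat a pure trace argument) is legitimate in principle but never materializes here, because the paper's computation already shows the trace can always be made nonzero — so your off-diagonal bookkeeping is a mild strengthening the paper does not need. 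Everything else (the reduction loop, analytic perturbation theory, the rank-drop conclusion, and the final summation of small perturbations) matches the paper.
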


For the sake of simplicity, we now use the notation $L_f$ in place of $L_{e^{2f}g}$ as all the perturbations will be conformal.

\begin{proof}
The claim can be reformulated as follows: for all $m \geq 0$,$\delta > 0$, there exists $f \in C^\infty_{\mathrm{comp}}(\Sigma^\circ)$ such that $\|f\|_{C^m} < \delta$ and the Dirichlet Laplacian $L_f := (n-1)\Delta_{e^{2f}g} + s_{e^{2f}g}$ has trivial kernel. Using the formulas for conformal change of $\Delta_g$ and $s_g$, we get
\[
L_f = e^{-2f}L_0 + e^{-2f}(n-1)((n-2)g(df, d\cdot) - 2\Delta_g f - (n-2)|df|^2).
\]
Differentiating with respect to $f$ at $f=0$ in the direction of $k \in C^\infty_{\mathrm{comp}}(\Sigma^\circ)$, we get
\begin{equation}
\label{equation:ldot}
\dot{L}_0 = -2 k L_0 + (n-1)(n-2)g(dk, d\cdot) - 2(n-1)\Delta_g k.
\end{equation}
(Here and below, we use the dot notation for $\partial_f \cdot|_{f=0}$.) Let $\gamma$ be a small contour in $\C$ around $0$ such that $0$ is the only eigenvalue of $L_0$ inside $\gamma$ (and avoiding all other eigenvalues of $L_0$). As $L_f$ depends continuously on $f$ in the $C^{2,\alpha}$-topology ($0 < \alpha < 1$), for $f$ small enough in $C^{2,\alpha}(\Sigma)$, $\gamma$ will also avoid all eigenvalues of $L_f$. Let $\lambda_f$ be the sum of the eigenvalues of $L_f$ inside the contour $\gamma$, that is
\begin{equation}
    \label{equation:integral}
\lambda_f := \Tr(L_f\Pi_f), \qquad \Pi_f := \dfrac{1}{2i\pi} \oint_\gamma (z-L_f)^{-1} \dd z.
\end{equation}
Here, the operator $\Pi_f$ is the spectral projector onto the eigenspaces associated to the eigenvalues inside $\gamma$.

Since $\Pi_f$ depends continuously on $f$, its rank is constant (for small $f$) and equal to the sum of the dimension of the eigenspaces of $L_f$ associated to eigenvalues inside $\gamma$. Our aim is to produce a small perturbation $f \neq 0$ such that $\lambda_f \neq 0$. This will imply that the $0$-eigenvalue of $L_0$ was ``split'' into at least two distinct eigenvalues, one of them being non-zero; in other words, $\dim \ker L_f < \dim \ker L_0$. Repeating this process a finite number of times, we will be able to produce a perturbation $f$ such that $\ker L_f = \{0\}$.

Let $(u_i)_{i=1}^N$ be an orthonormal basis of $\ker L_0$ in $L^2(\Sigma)$. Since $L_0$ is real (that is, it maps real-valued functions to real-valued functions), we can further choose the $u_i$'s to be real-valued functions. The following formula holds:

\begin{claim}
Define $\theta := \sum_{i=1}^N u_i^2$. Then one has :
\begin{equation}
    \label{equation:lambdadot}
\dot{\lambda}_0=- (n-1)(\tfrac{n}{2}+1) \int_{\Sigma} \Delta_g k(x) \cdot\theta(x) ~\dd\vol_g(x).
\end{equation}
\end{claim}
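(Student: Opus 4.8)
The plan is to compute $\dot\lambda_0$ by first-order perturbation theory. Write $\lambda_f = \Tr(L_f \Pi_f)$ as in \eqref{equation:integral}. Differentiating the Cauchy integral $\Pi_f = \tfrac{1}{2i\pi}\oint_\gamma (z-L_f)^{-1}\,\dd z$ at $f=0$ gives $\dot\Pi_0 = \tfrac{1}{2i\pi}\oint_\gamma (z-L_0)^{-1}\dot L_0 (z-L_0)^{-1}\,\dd z$, and hence
\[
\dot\lambda_0 = \Tr(\dot L_0\, \Pi_0) + \Tr(L_0\,\dot\Pi_0).
\]
For the second term, I would plug in the expression for $\dot\Pi_0$ and use the contour-integral identity together with $L_0\Pi_0 = 0$ (since $0$ is the eigenvalue enclosed by $\gamma$ and $\Pi_0$ projects onto $\ker L_0$): moving $L_0$ inside and writing $L_0(z-L_0)^{-1} = -\id + z(z-L_0)^{-1}$, the residue computation shows $\Tr(L_0\dot\Pi_0) = 0$ because $L_0\Pi_0 = \Pi_0 L_0 = 0$. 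Thus only the ``diagonal'' term survives:
\[
\dot\lambda_0 = \Tr(\dot L_0 \Pi_0) = \sum_{i=1}^N \langle \dot L_0 u_i, u_i\rangle_{L^2},
\]
using the orthonormal real-valued basis $(u_i)$ of $\ker L_0$. One subtlety to address: $\dot L_0$ is a differential operator and $u_i \in H^2_0(\Sigma)$, so the pairing $\langle \dot L_0 u_i, u_i\rangle$ must be interpreted via the quadratic form; since $u_i$ vanishes on $\partial\Sigma$ and the $u_i$ are smooth in the interior by elliptic regularity, integration by parts below is justified with no boundary contribution.

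Next I would substitute the formula \eqref{equation:ldot},
\[
\dot L_0 = -2kL_0 + (n-1)(n-2)\,g(dk, d\cdot) - 2(n-1)\Delta_g k,
\]
into $\sum_i \langle \dot L_0 u_i, u_i\rangle$. The first term contributes $-2\sum_i \langle kL_0 u_i, u_i\rangle = 0$ since $L_0 u_i = 0$. For the remaining two, I compute term by term. The third term gives $-2(n-1)\sum_i \int_\Sigma (\Delta_g k)\, u_i^2\,\dd\vol_g = -2(n-1)\int_\Sigma (\Delta_g k)\,\theta\,\dd\vol_g$ with $\theta = \sum_i u_i^2$. For the middle term, $(n-1)(n-2)\sum_i \int_\Sigma \langle dk, du_i\rangle u_i\,\dd\vol_g$: note $\sum_i \langle dk, du_i\rangle u_i = \tfrac12 \langle dk, d\theta\rangle$, so this equals $\tfrac{(n-1)(n-2)}{2}\int_\Sigma \langle dk, d\theta\rangle\,\dd\vol_g = -\tfrac{(n-1)(n-2)}{2}\int_\Sigma (\Delta_g k)\,\theta\,\dd\vol_g$ after integrating by parts (the boundary term vanishes because $k$ is compactly supported in $\Sigma^\circ$). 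Adding the two contributions:
\[
\dot\lambda_0 = -\Bigl(2(n-1) + \tfrac{(n-1)(n-2)}{2}\Bigr)\int_\Sigma (\Delta_g k)\,\theta\,\dd\vol_g = -(n-1)\Bigl(\tfrac{n}{2}+1\Bigr)\int_\Sigma (\Delta_g k)\,\theta\,\dd\vol_g,
\]
which is exactly \eqref{equation:lambdadot}, since $2 + \tfrac{n-2}{2} = \tfrac{n+2}{2} = \tfrac{n}{2}+1$.

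The main obstacle, such as it is, is bookkeeping rather than depth: one must be careful that the perturbation-theoretic identity $\dot\lambda_0 = \Tr(\dot L_0\Pi_0)$ genuinely holds here even though $L_0$ is unbounded — this is legitimate because $\Pi_0$ has finite rank, maps into $\ker L_0 \subset H^2_0(\Sigma) \cap C^\infty(\Sigma^\circ)$, and $\dot L_0$ is a well-defined (form-bounded) perturbation relative to $L_0$, so all traces are finite-dimensional and the Cauchy-integral manipulations are valid. The other point requiring a line of care is the vanishing of boundary terms in the two integrations by parts: for the $\Delta_g$ term there is no integration by parts at all, while for the $g(dk,d\cdot)$ term the identity $\sum_i\langle dk,du_i\rangle u_i = \tfrac12\langle dk,d\theta\rangle$ and compact support of $k$ kill any boundary contribution. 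Everything else is a direct computation, and the numerical coefficient matches as shown.
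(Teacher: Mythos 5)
Your proof is correct and follows essentially the same route as the paper: differentiate $\lambda_f = \Tr(L_f\Pi_f)$ to reduce to $\Tr(\dot L_0\Pi_0) = \sum_i\langle \dot L_0 u_i, u_i\rangle$, substitute \eqref{equation:ldot}, use $L_0 u_i = 0$ and that the $u_i$ are real to rewrite $\sum_i \langle dk, du_i\rangle u_i = \tfrac12\langle dk, d\theta\rangle$, then integrate by parts (no boundary term since $k$ is compactly supported in $\Sigma^\circ$). The only difference is that you spell out explicitly why $\Tr(L_0\dot\Pi_0) = 0$ via the contour-integral identity, whereas the paper simply invokes the standard first-order perturbation formula $\dot\lambda_0 = \Tr(\Pi_0\dot L_0\Pi_0)$; this is a welcome bit of extra rigor but not a different method.
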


\begin{proof}
Using \eqref{equation:integral} and differentiating under the integral sign, one gets $\dot{\lambda}_0 = \Tr(\Pi_0 \dot{L}_0 \Pi_0)$. Hence, using \eqref{equation:ldot}, we have:
\[
\begin{split}
\dot{\lambda}_0 & = \sum_{i=1}^N \langle \dot{L}_0 u_i, u_i \rangle_{L^2} = (n-1) \sum_{i=1}^N (n-2) \langle dk \cdot du_i, u_i \rangle_{L^2} - 2 \langle u_i \Delta k, u_i \rangle_{L^2} \\
& = (n-1) \sum_{i=1}^N \tfrac{n-2}{2} \langle dk, d(u_i^2) \rangle_{L^2} - 2 \langle \Delta k, u_i^2 \rangle_{L^2} \\
&= -(n-1) (\tfrac{n}{2}+1) \int_{\Sigma} \Delta_g k(x) \cdot \theta(x) ~\dd\vol_g(x),
\end{split}
\]
where we used in the second equality that $L_0u_i = 0$, in the third that the $u_i$'s are real-valued, in the fourth that $k$ vanishes to order $1$ on the boundary $\partial \Sigma$ (integration by parts).
\end{proof}

Now, we claim that there exists $k \in C^\infty_{\mathrm{comp}}(\Sigma^\circ)$ such that $\dot{\lambda}_0 \neq 0$. Indeed, if not, then we obtain after integration by parts in \eqref{equation:lambdadot} that
\[
\int_{\Sigma} k(x) \cdot\Delta_g\theta(x) ~\dd\vol_g(x) = 0, \qquad \forall k \in C^\infty_{\mathrm{comp}}(\Sigma^\circ).
\]
This implies that $\Delta_g\theta = 0$ on $\Sigma$ but $\theta = 0$ on $\partial \Sigma$ (as the $u_i$'s are eigenfunctions of the Dirichlet Laplacian $L_0$), so this would force $\theta \equiv 0$, which is a contradiction. Hence, we can find some $k_1 \in C^\infty_{\mathrm{comp}}(\Sigma^\circ)$ such that $\dot{\lambda}_0 \neq 0$. 

As a consequence, for $s \in (-1,1)$ small enough, $s \mapsto \lambda_{sk_1} \neq 0$. We then fix $s_1 >0$ small such that $f_1 := s_1k_1$ satisfies $\|f_1\|_{C^m} < \eps/K$ where $K := \dim \ker L_0$; the metric $e^{2f_1}g$ satisfies by construction that $\dim \ker L_{f_1} < \dim \ker L_0$. Iterating the same perturbation process (at most) $K$ times, we thus obtain a perturbation $e^{2f}g = e^{2(f_1+...+f_K)}g$ such that $\ker L_{f} = 0$ and $\|f\|_{C^m} < \eps$. This concludes the proof.
\end{proof}

%\begin{proposition}
 %   Let $(M,g)$ be a smooth compact manifold with boundary. Set $M_\delta := \left\{x \in M ~|~ d(x,\partial M) > \delta\right\}$ for $\delta > 0$. For all $\eps >0$ small enough, there exists $\eta>0$ such that for all $h\in C_{0}^{\infty}(M_\delta)$ (vanishing to infinite order at the boundary $\partial M_\delta$) with $\|h\|_{C^{m-2}}<\eta$, there exists $f \in C^\infty_0(M)$, such that $\|f\|_{C^m} < \eps$ on $M$ and $s_{e^{2f}g}=s_g+h$ on $M_\delta$.
%\end{proposition}

%\begin{proof}
%We extend $h$ by $0$ to $M^\circ$. By \cite[Theorem 3.1]{Kazdan-Warner-74-open} and \cite[Theorem 1.4]{Kazdan-Warner-75}, there exists $f_1\in C^{\infty}(M)$ such that $s_{e^{2f_1}g} = s_g + h$ on $M^\circ$. Let $\chi\colon M\rightarrow [0,1]$ be a $C_0^\infty(M)$ such that $\chi\equiv 1$ on $M_\delta$. Let $f=f_1\chi$. Then, by the construction, $f\in C_0^\infty(M)$ and $s_{e^{2f}g}=s_g+h$ on $M_\delta$. 

%Recall that
%\begin{equation*}
%s_{e^{2f}g}=\begin{cases}
%e^{-2f}s_g-2e^{-2f}\Delta_gf \quad &\text{if}\quad \dim M=2,\\
%e^{-2f}\left(s_g-\frac{4(n-1)}{n-2}e^{-\frac{(n-2)f}{2}}\Delta_g\left(e^{-\frac{(n-2)f}{2}}\right)\right)\quad &\text{if}\quad \dim M\geq 3.
%\end{cases}
%\end{equation*}

%{\color{red} To do norms estimates}
%\end{proof}

\subsection{Generic metrics on manifolds with boundary}

If $g$ is a metric, we denote by $\mathrm{Isom}(g)$ its isometry group. Following Ebin's work (see \cite[Theorem 147]{Ebin-68}), it is well-known that generic metrics on closed manifold have trivial isometry group (i.e. the set of metrics whose isometry group is trivial is open and dense). We shall need a similar (though slightly weaker) result for manifolds with boundary:

\begin{lemma}
\label{lemma:kill-isometries}
Let $(\Sigma,g)$ be a smooth compact manifold with boundary. Then for all $\alpha \in (0,1)$, for all $\eps >0$ small enough, there exists $\delta > 0$ and $f \in C^\infty_{\mathrm{comp}}(\Sigma^\circ)$, such that $\|f\|_{C^{3,\alpha}} < \eps$, the group $\mathrm{Isom}(e^{2f}g)$ is trivial, and for all $\varphi \in C^4(\Sigma)$ with $\|\varphi\|_{C^4} < \delta$, $\mathrm{Isom}(e^{2(f+\varphi)}g)$ is also trivial.
\end{lemma}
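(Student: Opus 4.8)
The plan is to combine Ebin's genericity theorem (here phrased for manifolds with boundary) with the conformal-perturbation machinery of Proposition~\ref{proposition: perturb to get LPSC}. First I would fix a conformal class and run the argument inside it: among conformal factors $e^{2f}g$ with $f \in C^\infty_{\mathrm{comp}}(\Sigma^\circ)$, one wants one whose isometry group is trivial. The starting observation is that an isometry $\psi$ of $e^{2f}g$ is in particular a conformal diffeomorphism of $(\Sigma,g)$; since conformal diffeomorphism groups of a fixed Riemannian manifold are finite-dimensional Lie groups (and for a \emph{generic} choice of $f$ they are small), the search can be reduced to finitely many candidate symmetries. A cleaner route: apply Proposition~\ref{proposition: perturb to get LPSC} to first arrange that $g$ is \textbf{(LPSC)} after a tiny conformal perturbation, so that Lemma~\ref{lemma: LPSC property} gives us full freedom to prescribe the scalar curvature by further small compactly-supported conformal changes. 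Then, to kill an isometry $\psi$, it suffices to produce a compactly supported perturbation of the scalar curvature function $s$ that is not invariant under $\psi$: since the scalar curvature is a Riemannian invariant, $\psi^* s_{e^{2f}g} = s_{e^{2f}g}$, so if the scalar curvature function is made to have, say, a unique strict maximum at an interior point, any isometry must fix that point and its isotropy representation; iterating (prescribing a scalar curvature whose level sets distinguish enough points, e.g. a Morse function with distinct critical values supported in $\Sigma^\circ$) forces $\mathrm{Isom}$ to be trivial. Crucially, all these prescriptions are achieved by conformal factors $f$ with $\|f\|_{C^{3,\alpha}}$ as small as we like, by the quantitative estimate $\|f\|_{C^{m+2,\alpha}}\le C\|h\|_{C^{m,\alpha}}$ in Lemma~\ref{lemma: LPSC property} (taking $m=1$ suffices to control $\|f\|_{C^{3,\alpha}}$).

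Next I would address the \emph{stability} clause, i.e. that $\mathrm{Isom}(e^{2(f+\varphi)}g)$ remains trivial for all $\varphi\in C^4(\Sigma)$ with $\|\varphi\|_{C^4}<\delta$. This is where I would invoke (a boundary version of) Ebin's result, or argue directly. The direct argument: the group $\mathrm{Isom}(h)$ of a $C^4$ metric $h$ is a compact Lie group acting by isometries; by a Myers--Steenrod / Palais-type argument, if $h$ is $C^4$-close to $h_0:=e^{2f}g$ and $\mathrm{Isom}(h_0)=\{\mathrm{id}\}$, then $\mathrm{Isom}(h)$ is also trivial --- the point being that a small isometry group, if nontrivial, would produce a nontrivial Killing field or a finite-order isometry $C^1$-close to the identity, and both are ruled out by an implicit-function / compactness argument once $h_0$ has \emph{no} nontrivial isometries and no nontrivial Killing fields. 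The absence of Killing fields follows from triviality of the identity component, which in turn follows once the scalar curvature function is, say, Morse with distinct critical values: a Killing field vanishes along the flow it generates, which would have to preserve $s_{h_0}$, hence fix each critical point, hence vanish. One must be slightly careful that $\varphi$ is not assumed compactly supported, so it may perturb the boundary geometry; but since we only claim triviality of $\mathrm{Isom}$ and not preservation of any boundary structure, this is harmless --- an isometry of $e^{2(f+\varphi)}g$ still must be a diffeomorphism of $\Sigma$ preserving $\partial\Sigma$ and $s_{e^{2(f+\varphi)}g}$, and for $\varphi$ small this function is still a Morse function with distinct critical values whose critical points lie in $\Sigma^\circ$, which pins down any isometry to the identity by the same argument.

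The main obstacle, I expect, is the \emph{uniformity} in the stability clause: one must show that a \emph{single} $\delta>0$ works for \emph{all} $\varphi$ in the $C^4$-ball, uniformly. The naive approach --- "$\mathrm{Isom}(h_0)$ trivial is an open condition" --- is not literally a statement about a fixed metric's neighborhood unless one has compactness of the isometry groups of nearby metrics, together with a uniform bound on their dimension and a uniform injectivity-radius-type lower bound. I would handle this by the following compactness argument: suppose not; then there are $\varphi_n\to 0$ in $C^4$ and nontrivial isometries $\psi_n$ of $h_n:=e^{2(f+\varphi_n)}g$. By Arzelà--Ascoli (isometries of uniformly bounded $C^4$ metrics are uniformly bounded in $C^1$, in fact $C^{4}$), a subsequence of $\psi_n$ converges in $C^{1}$ to an isometry $\psi_\infty$ of $h_0$, hence $\psi_\infty=\mathrm{id}$. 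One then needs to upgrade "$\psi_n\to\mathrm{id}$ and $\psi_n$ isometry of $h_n$" to "$\psi_n=\mathrm{id}$ for large $n$"; this is the subtle step and is exactly where the non-degeneracy built into $s_{h_0}$ (Morse, distinct critical values, critical points interior) is used: $\psi_n$ permutes the critical points of $s_{h_n}$, which for large $n$ are close to those of $s_{h_0}$ hence are fixed individually, and the derivative $d\psi_n$ at a critical point preserves the (nondegenerate) Hessian of $s_{h_n}$ while being close to the identity, so it equals the identity, whence $\psi_n=\mathrm{id}$. Packaging this cleanly --- perhaps by citing the boundary analogue of \cite[Theorem 147]{Ebin-68} directly, if it yields the needed uniformity --- is the part that requires the most care; everything else is a routine application of Lemma~\ref{lemma: LPSC property} and Proposition~\ref{proposition: perturb to get LPSC}.
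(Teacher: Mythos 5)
Your plan follows essentially the same route as the paper: pass to an \textbf{(LPSC)} metric via Proposition~\ref{proposition: perturb to get LPSC}, then use Lemma~\ref{lemma: LPSC property} to conformally prescribe a Morse-type scalar curvature by small $f\in C^\infty_{\mathrm{comp}}(\Sigma^\circ)$, use that isometries preserve the scalar curvature (hence its critical points and their Hessians), and stabilize via a $C^2$-smallness/compactness argument. Your device for distinguishing critical points (distinct critical values) is a perfectly workable alternative to the paper's device (distinct Hessian eigenvalues at all critical points); both prevent an isometry from permuting them.

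There is, however, a genuine gap in the base case (showing $\mathrm{Isom}(e^{2f}g)$ is trivial, not just that isometries near the identity are trivial). Once all critical points are individually fixed, you still have to rule out a nontrivial isotropy at a fixed point: preserving a nondegenerate Hessian with distinct eigenvalues only forces $d\phi_{x_0}$ to be diagonal with $\pm1$ entries, so reflections and $-\mathrm{id}$ are not excluded. Your ``while being close to the identity, so it equals the identity'' argument is legitimate for the stability clause (where $\psi_n\to\mathrm{id}$), but cannot be invoked for a possibly far-from-identity element of $\mathrm{Isom}(e^{2f}g)$; ``iterating'' and vague appeal to the isotropy representation do not close this. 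The paper supplies the missing ingredient: it arranges $n+1$ interior critical points $x_0,\dots,x_n$ so that the (unique) geodesics $\gamma^g_{x_0x_i}$ have initial velocities spanning $T_{x_0}\Sigma$. Since each $x_i$ is fixed and the geodesic from $x_0$ to $x_i$ is unique, $\phi$ preserves each such geodesic and hence $d\phi_{x_0}$ fixes a spanning set, giving $d\phi_{x_0}=\mathrm{id}$ and thus $\phi=\mathrm{id}$. You need this (or an equivalent mechanism, e.g.\ requiring the restriction of $s$ to small geodesic spheres around a fixed critical point to be itself a generic function) to complete the first half of the lemma.
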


The previous lemma shows that the set $\mc{G}$ of metrics without isometries contains a dense open subset. Following Ebin's proof of \cite[Theorem 147]{Ebin-68}, one could actually show that $\mc{G}$ itself is open and dense but this will not be needed in what follows. Lemma \ref{lemma:kill-isometries} is probably classical but since we could not find a proof in the literature, we provide a quick one.

%\begin{lemma}
   % \label{lemma:no-isometry-open}
    %Let $(\Sigma,g)$ be a smooth compact manifold with boundary and further assume that $\mathrm{Isom}(g)$ is trivial. Then, there exists $\eps > 0$ such that for all smooth metric $g'$ such that $\|g-g'\|_{C^2} < \eps$, $\mathrm{Isom}(g')$ is trivial.
%\end{lemma}

\begin{proof}[Proof of Lemma \ref{lemma:kill-isometries}]
First, by Proposition \ref{proposition: perturb to get LPSC}, there exists $f \in C^\infty_{\mathrm{comp}}(\Sigma^\circ)$ with $f$ arbitrarily small in the $C^{3,\alpha}$-topology such that $e^{2f}g$ is \textbf{(LPSC)} on $\Sigma$. Hence, up to renaming $g$ by $e^{2f}g$, we can already assume that $g$ is \textbf{(LPSC)}.

For $\delta > 0$, set $\Sigma_\delta := \left\{x \in \Sigma ~|~ d_g(x,\partial \Sigma) > \delta\right\}$. In the following, we fix $\delta_0 := \tfrac{1}{1789}\mathrm{diam}_g(M)$ sufficiently small. 
%Then, Lemma~\ref{lemma: LPSC property} guarantees that for all $m\geq 0$ and $0<\alpha<1$, there exists
%First, we claim that there exists $C, \eta > 0$ such that for all $h \in C^\infty_0(M_\delta)$ (vanishing to infinite order on $\partial M_\delta$) with $\|h\|_{C^{m-2}} < \eta$, there exists $f \in C^\infty_0(M)$ such that $s_{e^{2f}g} = s_g + h$ on $M_\delta$ and $\|f\|_{C^m} \leq C \|h\|_{C^{m-2}}$ on $M$. {\color{red}todo}.
We apply Lemma \ref{lemma: LPSC property} to $(\Sigma,g)$ with $m=1$ and $0 < \alpha < 1$ arbitrary; let $C,\eps_0$ be as in Lemma~\ref{lemma: LPSC property}. Now, take $h \in C^\infty_{\mathrm{comp}}(\Sigma^\circ)$ such that $\|h\|_{C^{1,\alpha}}<\eps/C$ and $s_{g}+h$ is a Morse function in $\Sigma_{\delta_0}$, that is $s_{g}+h$ has isolated critical points (if any) in $\Sigma_{\delta_0}$ with non-degenerate Hessian (this is always possible by density of Morse functions in the smooth topology). We further require $s_{g}+h$ to have at least $n+1$ critical points $\{x_i\}_{i=0,\ldots, n}$ so that they are at distance $> \tfrac{1}{496}\mathrm{diam}_{g}(\Sigma)>\delta_0$ from $\partial \Sigma$ (with respect to the metric $g$), and $\{\dot{\gamma}^g_{x_0x_i}(0)\}_{i=1,\ldots, n}$ span $T_{x_0}\Sigma$ where $\gamma^g_{x_0x_i}$ is a unique geodesic with respect to $g$ connecting points $x_0$ and $x_i$ and $\gamma^g_{x_0x_i}(0)=x_0$.

%one critical point $x_0$ at distance $> \tfrac{1}{496}\mathrm{diam}(M)$ from $\partial M$. 

Since the set of critical points is finite, we can further guarantee that the perturbation satisfies that all the eigenvalues at all critical points are different, that is: for all $x \in \mathrm{Crit}(s_{g}+h|_{\Sigma_{\delta_0}})$, the eigenvalues $\lambda_1(x) < ... < \lambda_n(x)$ of $\mathrm{Hess}_x(s_{g}+h)$ are different, and for all $x, y \in \mathrm{Crit}(s_{g}+h|_{\Sigma_{\delta_0}})$, there is no $1 \leq i,j \leq n$ such that $\lambda_i(x)=\lambda_j(y)$ (note that this can be achieved by keeping the $C^2$-norm of $h$ small, hence its $C^{1,\alpha}$ small as well).

By Lemma \ref{lemma: LPSC property}, there exists $f \in C^\infty_0(\Sigma)$ (vanishing to order $0$ on $\partial \Sigma$) such that $s_{e^{2f}g} = s_{g} + h$, and $\|f\|_{C^{3,\alpha}} \leq C\|h\|_{C^{1,\alpha}} < \eps$. Let $\psi \in C^\infty_{\mathrm{comp}}(\Sigma^\circ)$ such that $\psi \equiv 1$ in a neighborhood of $\Sigma_{\delta_0}$. Note that, up to changing $\eps$ by $C_0\eps$ for some $C_0$ depending only on $\delta_0$, we can still guarantee $\|f \psi\|_{C^{3,\alpha}} < \eps$. Moreover, the distance of the points $x_i$'s from $\partial \Sigma$ with respect to the new metric $e^{2f\psi}g$ is $> \delta_0$ and, up to taking $\eps > 0$ small enough, $\{\dot{\gamma}^{e^{2f\psi}g}_{x_0x_i}(0)\}_{i=1,\ldots, n}$ span $T_{x_0}\Sigma$.

We now claim that $e^{2f\psi}g$ has no isometry. First, observe that on $\Sigma_{\delta_0}$, $s_{e^{2f\psi}g} = s_{e^{2f}g} = s_g + h$. Let $x$ be a critical point of $s_{e^{2f \psi}g}$ at distance $> \delta_0$ from $\partial \Sigma$ with respect to $e^{2f\psi}g$. If $\phi$ is an isometry of $e^{2f \psi}g$ on $\Sigma$, then it maps critical points of $s_{e^{2f \psi}g}$ to critical points of $s_{e^{2f \psi}g}$ so $x$ is mapped to another critical point $x'$; but $d_{e^{2f\psi}g}(x,\partial \Sigma) = d_{e^{2f\psi}g}(x',\partial \Sigma) > \delta_0$, so we know that $x'$ is not contained in $\Sigma \setminus \Sigma_{\delta_0}$. Moreover, $\phi$ maps isometrically $\mathrm{Hess}_{x}$ to $\mathrm{Hess}_{x'}$. However, if $x \neq x'$, we would get a contradiction from the fact that the Hessians of $s_g + h$ have different eigenvalues at critical points in $\Sigma_{\delta_0}$ by construction, so $x = x'$. Applying this argument with $x=x_i$, we get that $x_i$ are fixed points of $\phi$ so the geodesics $\gamma^{e^{2f\psi}g}_{x_0x_i}$ are also fixed by $\phi$. Since $\{\dot{\gamma}^{e^{2f\psi}g}_{x_0x_i}(0)\}_{i=1, \ldots,n}$ span $T_{x_0}\Sigma$, we obtain that $d\phi_{x_0}=\mathrm{id}$ so $\phi=\mathrm{id}$ on $\Sigma$. This proves the first part of the Lemma.

To see that $e^{2(\varphi + f)}g$ has no isometry if $\|\varphi\|_{C^4}$ is small enough, it suffices to observe that $s_{e^{2\varphi + f}g} - s_{e^{2f}g}$ is then small in $C^2$, and thus the arguments of the previous paragraph involving the Hessian still carry on.

%But then $\dd\phi_{x_0}$ commutes with $\mathrm{Hess}_{x_0}$ so it preserves the one-dimensional eigenspaces of $\mathrm{Hess}_{x_0}$ and therefore acts as $\pm \mathrm{id}$ on each of these eigenspaces since $\dd\phi_{x_0} \in \mathrm{SO}(T_{x_0}\Sigma,e^{2f(x_0)}g'_{x_0})$. Since $\phi^2$ fixes $x_0$ and $\dd \phi^2_{x_0} = \mathrm{id}$, we get that $\phi^2=\mathrm{id}$ on $\Sigma$. Moreover, since $\{x_i\}_{i=0,\ldots,n}$ are fixed points of $\phi$, we have that $\phi$ fixes geodesics $\gamma_{x_0x_i}$. {\color{red}to be finished tomorrow; here we have that the isometry group is at most $\Z_2$; we want to show that $\phi$ is the identity up to making a new perturbation of the metric maybe}

\end{proof}

%\begin{proof}[Proof of Lemma \ref{lemma:no-isometry-open}]
 %   {\color{red}same proof as in Ebin Theorem 147?}
%\end{proof}

\subsection{Perturbation lemma}

The proof of Theorem \ref{theorem:transfer} will also be based on the following lemma:

\begin{proposition}
\label{proposition:technical}
Let $(M,g)$ be a smooth closed manifold, and let $\Sigma \subset M$ be compact codimension $0$ submanifold with boundary. For all $m \geq 2$, $\eps > 0$, there exists $f \in C^m_{0;m}(M \setminus \Sigma)$ with support in $M \setminus \Sigma$ and vanishing to order $m$ at $\partial \Sigma$ such that $\|f\|_{C^m} < \eps$ and the following property holds: for all $x \in M \setminus \Sigma^\circ, y \in \Sigma$, there exists no open neighborhood $U_x$ of $x$, and $U_y$ of $y$, and no diffeomorphism $\phi \colon U_x \cap (M \setminus \Sigma^\circ) \to U_y \cap \Sigma$ such that $\phi^*(g|_{U_y \cap \Sigma}) = e^{2f}g|_{U_x \cap (M \setminus \Sigma^\circ)}$.
\end{proposition}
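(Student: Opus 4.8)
The plan is to exploit the scalar curvature together with its covariant derivatives (and, if needed, the full curvature tensor) as \emph{local} invariants: any diffeomorphism $\phi$ as in the statement is a local isometry from $(M\setminus\Sigma^\circ, e^{2f}g)$ near $x$ to $(\Sigma,g)$ near $y$, hence intertwines all such invariants; in particular $s_{e^{2f}g}=s_g\circ\phi$ near $x$, $|\nabla s_{e^{2f}g}|^2_{e^{2f}g}=|\nabla s_g|^2_g\circ\phi$, and so on. So I would choose a small conformal perturbation $f$, supported in $M\setminus\Sigma$ and vanishing to order $m$ on $\partial\Sigma$, for which the ``curvature data'' of $e^{2f}g$ on $M\setminus\Sigma$ is separated, as a subset of a suitable jet space, from that of $g$ on $\Sigma$; the residual difficulty is that on $\partial\Sigma$ itself the $m$-jet of $f$ is pinned to vanish, so the curvature data there is forced to agree with that of $g$, and this case has to be handled by a boundary-specific argument.

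First I would dispose of the case $x\in\partial\Sigma$. If $\phi$ exists, then $U_x\cap(M\setminus\Sigma^\circ)$ is a manifold with non-empty boundary, so $U_y\cap\Sigma$ must be too, which forces $y\in\partial\Sigma$; moreover $\phi$ restricts to a local isometry of $(\partial\Sigma,g|_{\partial\Sigma})$, and $d\phi_x$ sends the unit normal to $\partial\Sigma$ pointing into $M\setminus\Sigma$ to the unit normal pointing into $\Sigma$. Comparing second fundamental forms (which for $e^{2f}g$ coincide with those for $g$ along $\partial\Sigma$, since $m\geq 2$) yields $\mathrm{II}^g_\nu(x)=-\phi^*\mathrm{II}^g_\nu(y)$: when $\partial\Sigma$ is strictly convex this is already a contradiction, and in general one obtains a contradiction by additionally prescribing the normal Taylor coefficient of $f$ of order $m+1$ at $\partial\Sigma$ --- which is unconstrained precisely because $f$ is only required to be $C^m$ --- so as to make the first non-trivial normal derivative of $s_{e^{2f}g}$ along $\partial\Sigma$ a ``generic'' function on $\partial\Sigma$, incompatible with the side-reversed normal jet of $s_g$.

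For $x$ in the interior of $M\setminus\Sigma$, the same dimensional reasoning forces $y\in\Sigma^\circ$, so $\phi$ is an honest local isometry of open sets. Here I would use Proposition \ref{proposition: perturb to get LPSC} to first make $M\setminus\Sigma$ satisfy \textbf{(LPSC)} via a perturbation compactly supported in the open set $M\setminus\Sigma$ (hence vanishing to all orders at $\partial\Sigma$), then use Lemma \ref{lemma: LPSC property} together with a standard localization/transversality argument to arrange that the map $x\mapsto\bigl(s_{e^{2f}g}(x),\,|\nabla s_{e^{2f}g}|^2_{e^{2f}g}(x),\,\Delta_{e^{2f}g}s_{e^{2f}g}(x),\dots\bigr)$ on each compact set $(M\setminus\Sigma)_\delta=\{d_g(\cdot,\partial\Sigma)\geq\delta\}$ is as generic as possible --- a Morse map with image disjoint from that of the analogous map for $g$ on $\Sigma$; since local isometries intertwine these scalars, this excludes every $x\in(M\setminus\Sigma)_\delta$. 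For $x$ in the collar $\{0<d_g(x,\partial\Sigma)<\delta\}$ I would run a compactness argument: if local isometries $\phi_j$ existed along a sequence $x_j\to x_\infty\in\partial\Sigma$, then after passing to a subsequence $y_j=\phi_j(x_j)\to y_\infty\in\partial\Sigma$ and the $\phi_j$, being isometries of balls of bounded geometry, converge to a local isometry $\phi_\infty\colon U_{x_\infty}\cap(M\setminus\Sigma^\circ)\to U_{y_\infty}\cap\Sigma$, contradicting the boundary case already treated.

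The main obstacle is the behaviour near $\partial\Sigma$. Away from the boundary, conformal prescription of the scalar curvature plus transversality is routine; the delicate point is that on $\partial\Sigma$ the perturbation has vanishing $m$-jet, so the curvature data of $e^{2f}g$ and of $g$ agree there and cannot be separated by any invariant of order $\leq m$ --- which is exactly why the boundary case must be settled by the manifold-with-boundary (side-reversal) argument, and why the finite regularity of $f$ is genuinely used: the unconstrained $(m+1)$-st normal coefficient of $f$ supplies just enough room to break the remaining symmetry. Making these genericity statements --- disjointness of the interior invariant-images and genericity of the boundary normal jet --- simultaneously, while keeping $\|f\|_{C^m}<\eps$, is the part that requires care.
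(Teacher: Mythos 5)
The overall starting point --- use scalar curvature as the local invariant that an isometry must intertwine, and perturb it conformally via the \textbf{(LPSC)} machinery (Proposition \ref{proposition: perturb to get LPSC} and Lemma \ref{lemma: LPSC property}) --- matches the paper. From there your outline diverges, and the divergence introduces genuine gaps.

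The most serious problem is the separate boundary case. First, the second-fundamental-form ``side-reversal'' observation $\mathrm{II}^g_\nu(x)=-\phi^*\mathrm{II}^g_\nu(y)$ gives no contradiction in general: Proposition \ref{proposition:technical} is stated for an arbitrary compact codimension-$0$ submanifold $\Sigma\subset M$, with no convexity hypothesis on $\partial\Sigma$, so a flat or sign-changing $\mathrm{II}$ is entirely compatible with the identity. Second, the proposed remedy --- prescribing the ``$(m+1)$-st normal Taylor coefficient'' of $f$ --- is not available: $f$ is only $C^m$, and for an $f\in C^m_{0;m}$ (vanishing to order $m$ at $\partial\Sigma$, supported in $M\setminus\Sigma$) there is in general no well-defined $(m+1)$-st derivative, hence no normal Taylor coefficient of that order; likewise $s_{e^{2f}g}$ involves two derivatives of $f$, so $\nabla^{m-1}s_{e^{2f}g}$ along $\partial\Sigma$ is not a classical object one can make ``generic.'' What the paper actually does is \emph{not} run any argument at the boundary points themselves. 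It instead defines, for each $k$, the open set $\Omega_k$ of conformal factors for which no isometry exists between \emph{geodesic balls of radius $1/k$} on the two sides, and builds $f\in\bigcap_k\Omega_k$ by a Baire category argument. Since any $1/k$-ball around a point of $M\setminus\Sigma^\circ$ (boundary included) contains one of the perturbation points placed at distance $>\tfrac{1}{100k}$ from $\partial\Sigma$, the boundary case is absorbed automatically: there is nothing separate to prove, and the finite regularity of $f$ enters only through the blow-up mechanism, not through any Taylor coefficient at $\partial\Sigma$.

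The interior argument is also more delicate than ``transversality is routine.'' The real constraint is quantitative: one must keep $\|f\|_{C^m}<\eps$ while simultaneously separating curvature data uniformly on each $(M\setminus\Sigma)_\delta$. The paper achieves this with a carefully scaled bump $h_\delta$ of height $\delta^{m-2+1/m}$ placed at a $\tfrac{1}{10k}$-dense family of interior points, so that $\|h_\delta\|_{C^{m-2,\alpha}}\to 0$ but $|\nabla^{m-1}s_{e^{2f}g}|$ at the bump centers diverges past $\sup_\Sigma|\nabla^{m-1}s_g|$; this blow-up in $C^{m-1}$-norm of the scalar curvature with a simultaneous control of $\|f\|_{C^{m,\alpha}}$ is the technical heart of the proof, and a generic ``Morse map with disjoint image'' statement does not by itself deliver it. Your compactness argument for the collar is essentially the Arzelà–Ascoli step the paper uses to prove that each $\Omega_k$ is open, but in your scheme it reduces the collar to the boundary case, which is the step that does not go through.
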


In other words, we can perturb the metric $g$ so that there exists no local isometries between $\Sigma$ and its complement $M \setminus \Sigma^\circ$. The notation $C^m_{0;m}(M \setminus \Sigma)$ is used to denote functions supported in $M \setminus \Sigma$ and vanishing to order $m$ at $\partial \Sigma$. As we shall see from the proof, the same statement holds with $g$ that is $C^{m+1}$-regular instead of smooth. The space $C^m_{0;m}(M \setminus \Sigma)$ endowed with the norm $\|\cdot\|_{C^m}$ is a Banach space; the proof is based on a Baire category type of argument in $C^m_{0;m}(M \setminus \Sigma)$. In the proof, the $C^m$-norms are computed with respect to the metric $g$ for the sake of simplicity, but the statement holds regardless of the metric considered to compute the norm. 

\begin{proof}
It suffices to prove the statement for $m \geq 2$ odd, which we will assume throughout the proof. Define for $k \geq 1$
\begin{equation}
\label{equation:omega-k}
\begin{split}
\Omega_k := & \left\{f \in C^m_{0;m}(M \setminus \Sigma) ~|~ \forall x \in M \setminus \Sigma^\circ, y \in \Sigma, \right. \\
& \qquad \left. \forall \phi \in  \mathrm{Diff}(B_{e^{2f}g}(x,1/k) \cap (M \setminus \Sigma^\circ),B_{g}(y,1/k) \cap \Sigma), \right. \\
& \qquad \left.  \phi^*(g|_{B_{g}(y,1/k) \cap \Sigma}) - e^{2f}g \neq 0, \text{on } B_{e^{2f}g}(x,1/k) \cap (M \setminus \Sigma^\circ)\right\},
\end{split}
\end{equation}
where $B_g(x,\eps)$ is the closed geodesic ball (with respect to the metric $g$), centered at $x$ and with radius $\eps > 0$. In other words, $\Omega_k$ is the set of functions supported in $M \setminus \Sigma$ such that the metric $e^{2f}g$ has no isometries between any geodesic ball of radius $1/k$ centered in $\Sigma$, and any other geodesic ball of radius $1/k$ centered in $M \setminus \Sigma^\circ$. Note that the regularity of the diffeomorphism $\phi$ in the definition \eqref{equation:omega-k} is not specified (it is at least $C^1$) but since the metrics are $C^m$, any isometry is automatically $C^{m+1}$ by \cite[Theorem 2.1]{Taylor-06}.

\begin{claim}
\label{claim:1}
    For all $k \geq 1$, $\Omega_k$ is open in the $C^m$-topology.
\end{claim}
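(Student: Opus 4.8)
The strategy is to show that the complement of $\Omega_k$ is closed, by a compactness-and-limiting argument. Suppose $(f_j)_{j \geq 1}$ is a sequence in $C^m_{0;m}(M \setminus \Sigma) \setminus \Omega_k$ converging to some $f$ in the $C^m$-topology; I want to show $f \notin \Omega_k$ as well. For each $j$, by definition of $\Omega_k$ there are points $x_j \in M \setminus \Sigma^\circ$ and $y_j \in \Sigma$ and a diffeomorphism
\[
\phi_j \colon B_{e^{2f_j}g}(x_j, 1/k) \cap (M \setminus \Sigma^\circ) \;\longrightarrow\; B_g(y_j, 1/k) \cap \Sigma
\]
realizing an isometry $\phi_j^*(g) = e^{2f_j}g$ on its domain. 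As noted in the excerpt, each $\phi_j$ is automatically $C^{m+1}$ by \cite[Theorem 2.1]{Taylor-06}, and — this is the key quantitative input — one has uniform $C^{m+1}$ bounds on the $\phi_j$ in terms of the $C^m$-bounds on $e^{2f_j}g$, which are themselves uniformly bounded since $f_j \to f$ in $C^m$. (Such uniform bounds follow from the proof of the isometry-regularity theorem, or can be extracted by writing the isometry equation as an elliptic system in harmonic coordinates; this is where I expect to lean on a cited regularity result rather than reprove it.)

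Next I would pass to subsequences: by compactness of $M$, $x_j \to x_\infty \in M \setminus \Sigma^\circ$ and $y_j \to y_\infty \in \Sigma$ (both sets are closed). The uniform $C^{m+1}$ bounds together with Arzelà–Ascoli give a subsequence along which $\phi_j \to \phi_\infty$ in $C^m$ on compact subsets, and the limit satisfies $\phi_\infty^*(g) = e^{2f}g$. The one genuinely delicate point — and the main obstacle — is domain control: the balls $B_{e^{2f_j}g}(x_j, 1/k)$ vary with $j$, their intersections with $M \setminus \Sigma^\circ$ can be irregular near $\partial\Sigma$, and I must make sure the limiting map $\phi_\infty$ is defined on a genuine open piece of $B_{e^{2f}g}(x_\infty, 1/k) \cap (M \setminus \Sigma^\circ)$ rather than collapsing or shrinking to nothing. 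Since $f_j \to f$ in $C^m \supset C^0$, the metrics $e^{2f_j}g$ converge uniformly, so geodesic balls converge in the Hausdorff sense and for any compact $K \subset B_{e^{2f}g}(x_\infty, 1/k)^\circ$ we have $K \subset B_{e^{2f_j}g}(x_j, 1/k)$ for $j$ large; exhausting the open ball by such $K$ lets $\phi_\infty$ be defined on all of $B_{e^{2f}g}(x_\infty,1/k)^\circ \cap (M\setminus\Sigma^\circ)$, and then extended to the closed ball by continuity using the uniform bounds. One should also check $\phi_\infty$ remains a diffeomorphism onto its image and lands in $B_g(y_\infty,1/k)\cap\Sigma$, which again follows from uniform convergence of the inverses $\phi_j^{-1}$ (these enjoy the same uniform $C^{m+1}$ estimates by symmetry of the isometry relation).

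Having produced $x_\infty, y_\infty$ and an isometry $\phi_\infty^*(g) = e^{2f}g$ between the appropriate balls, we conclude $f \notin \Omega_k$, so the complement of $\Omega_k$ is sequentially closed — hence closed, since $C^m_{0;m}(M\setminus\Sigma)$ is a metric space — and therefore $\Omega_k$ is open, as claimed. I would present the domain-exhaustion step with some care and treat the Arzelà–Ascoli extraction and the passage to the limit in the equation $\phi_j^*(g) = e^{2f_j}g$ as routine.
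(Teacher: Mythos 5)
Your overall strategy — show the complement is sequentially closed by extracting convergent subsequences $x_j \to x_\infty$, $y_j \to y_\infty$, and a limiting isometry $\phi_\infty$ via Arzel\`a--Ascoli — is the same one the paper uses. The meaningful difference is in the compactness input. You propose to extract uniform $C^{m+1}$ bounds on the $\phi_j$ (via elliptic estimates in harmonic coordinates, or a quantitative version of the isometry-regularity theorem) and pass to a $C^m$-convergent limit in which the equation $\phi_\infty^*(g) = e^{2f}g$ holds in the classical sense. This works, but it is heavier than necessary and, as you yourself flag, requires a \emph{quantitative} regularity statement that \cite[Theorem 2.1]{Taylor-06} does not state and that you would need to extract from its proof. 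The paper sidesteps this entirely: the $\phi_n$ are Riemannian isometries, hence $1$-Lipschitz with respect to metrics $e^{2f_n}g$ that are uniformly comparable to $g$, so uniform $C^1$ bounds are free; Arzel\`a--Ascoli then gives $C^0$ limits $\phi_n \to \phi$ and $\phi_n^{-1} \to \phi^{-1}$, the limit $\phi$ is a \emph{distance-preserving} homeomorphism because each $\phi_n$ is, and only now is Taylor's theorem invoked — applied to the single limit map $\phi$, where the qualitative statement suffices — to conclude $\phi$ is $C^{m+1}$ and so $f \notin \Omega_k$. That observation (defer regularity to the limit rather than propagating it uniformly along the sequence) is the clean shortcut; with it, the elliptic-system machinery you lean on is unnecessary. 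Your domain-exhaustion discussion is a reasonable way to handle the varying balls $B_{e^{2f_j}g}(x_j, 1/k)$ and is more explicit than the paper on this point, which is fine, though once you know the limit is a distance-preserving homeomorphism the Hausdorff convergence of the balls makes this routine.
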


\begin{proof}Assume for a contradiction that $f \in \Omega_k$ and $f_n \in C^m_{0;m}(M\setminus\Sigma)$ is a sequence such that $f_n \to f$ in the $C^m$-topology but $f_n \notin \Omega_k$. Then, for every $n \geq 0$, there exists $x_n \in M \setminus \Sigma^\circ$, $y_n \in \Sigma$ and an isometry $\phi_n \colon B_{e^{2f_n}g}(x_n,1/k) \cap (M \setminus \Sigma^\circ) \to B_{g}(y_n,1/k) \cap \Sigma$ such that $\phi^*(g|_{B_{g}(y_n,1/k) \cap \Sigma}) = e^{2f_n}g$ on $B_{e^{2f_n}g}(x_n,1/k) \cap (M \setminus \Sigma^\circ)$. By compactness, up to considering subsequences, we have that $x_n \to x$ in $M \setminus \Sigma^\circ$ and $y_n \to y$ in $\Sigma$. The diffeomorphisms $\phi_n$ have uniformly (in $n \geq 0$) bounded $C^1$-norm when computed with respect to the fixed smooth metric $g$; moreover, they are isometries with respect to the metric $e^{2f_n}g$. As a consequence, by the Arzelà-Ascoli compactness theorem, we obtain that, up to taking another subsequence, $\phi_n \to \phi$ and $\phi_n^{-1} \to \phi^{-1}$ as $n \to \infty$ in the $C^0$-topology, where $\phi \colon B_{e^{2f}g}(x,1/k) \cap (M \setminus \Sigma^\circ) \to B_{e^{2f}g}(y,1/k) \cap \Sigma$ is a homeomorphism. Moreover, as $\phi_n$ are distance-preserving homeomorphisms, we deduce that $\phi$ is also a distance-preserving homeomorphism. Since $e^{2f}g$ is $C^m$, we deduce by \cite[Theorem 2.1, items 1 and 4]{Taylor-06} that $\phi$ is $C^{m+1}$, which then implies that $f \notin \Omega_k$. This is a contradiction so $\Omega_k$ is open. \end{proof}

We now prove that $\Omega_k \cap C^\infty(M \setminus \Sigma)$ is dense in smooth (compactly supported) functions for the $C^m$-topology in the following sense: 

\begin{claim}
\label{claim:2}
    For all $k \geq 1, \eps > 0$ and $f_0 \in C^\infty_{\mathrm{comp}}(M \setminus \Sigma)$, there exists $f \in C^\infty_{\mathrm{comp}}(M \setminus \Sigma)$ such that $f_0+f \in \Omega_k$ and $\|f\|_{C^{m}} < \eps$.
\end{claim}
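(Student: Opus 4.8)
The plan is to prove Claim \ref{claim:2} by a perturbation argument that destroys all local isometries between $\Sigma$ and $M \setminus \Sigma^\circ$, exploiting the fact that a local isometry must send scalar curvature to scalar curvature. Starting from a given $f_0 \in C^\infty_{\mathrm{comp}}(M \setminus \Sigma)$, I would first work near the boundary. By Lemma \ref{lemma: LPSC property}, after an arbitrarily small conformal perturbation I may assume $e^{2f_0}g$ is \textbf{(LPSC)} on $M \setminus \Sigma$, so that I can prescribe the scalar curvature on $M \setminus \Sigma$ by a further small conformal factor vanishing to order $m$ at $\partial \Sigma$ (and supported inside $M \setminus \Sigma$). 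The idea is to use this freedom to make the scalar curvature of $e^{2(f_0+f)}g$ on a collar neighborhood of $\partial(M \setminus \Sigma) \supset \partial \Sigma$ take a value, or exhibit a gradient/Hessian behavior, that is incompatible with the scalar curvature of $g$ restricted to $\Sigma$ near $\partial \Sigma$. The key observation is that the $C^\infty$-jet of $g$ at $\partial \Sigma$ is fixed (it coincides with the jet of $e^{2f}g$ there since $f$ vanishes to order $m$), so a local isometry $\phi\colon U_x \cap (M \setminus \Sigma^\circ) \to U_y \cap \Sigma$ with $x \in \partial \Sigma$ would have to match the (perturbed) exterior scalar curvature with the (fixed) interior one; by choosing the perturbation to alter, say, the normal derivative of $s$ along $\partial \Sigma$ from the exterior side while leaving the interior side untouched, no such $\phi$ can exist for $x$ near $\partial \Sigma$.

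For points $x \in M \setminus \Sigma^\circ$ bounded away from $\partial \Sigma$ — say in $\{d_g(x,\partial \Sigma) \geq 1/k\}$ — I would argue exactly as in the proof of Lemma \ref{lemma:kill-isometries}: a small conformal perturbation (now supported in the interior of $M \setminus \Sigma$, away from $\partial \Sigma$) makes $s_{e^{2(f_0+f)}g}$ a Morse function on this region with all critical values and all Hessian eigenvalues at all critical points mutually distinct, and with enough critical points whose outgoing geodesics span the tangent space. Since any local isometry carries critical points of the scalar curvature to critical points and Hessians to Hessians (preserving eigenvalues), no geodesic ball of radius $1/k$ centered at such an $x$ can be isometric to any ball centered in $\Sigma$: either the matching critical point lies in $\Sigma$ and a mismatch of eigenvalues is forced, or there is no critical point of the right type inside $B_g(y,1/k) \cap \Sigma$ to receive one from $B_{e^{2(f_0+f)}g}(x,1/k)$. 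Combining the collar argument and the interior argument (both perturbations being small and having disjoint supports, so additive), one obtains an $f$ with $\|f\|_{C^m} < \eps$ and $f_0 + f \in \Omega_k$, which is the assertion of the claim.

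The main obstacle is the transition region and the bookkeeping of which balls intersect both the collar and the Morse region: a ball $B_{e^{2(f_0+f)}g}(x,1/k) \cap (M \setminus \Sigma^\circ)$ with $x$ at intermediate distance from $\partial \Sigma$ may partly see the collar (where scalar curvature is prescribed) and partly see interior Morse structure. I would handle this by taking the collar perturbation to affect a neighborhood of $\partial \Sigma$ of width comparable to $1/k$ and ensuring the prescribed exterior scalar curvature there is genuinely out of range of $s_g$ on the corresponding part of $\Sigma$ (using (LPSC) to prescribe freely, e.g. forcing $s_{e^{2(f_0+f)}g}$ to be large negative in the collar), so that a local isometry with $x$ in or near the collar is ruled out by a pointwise value comparison rather than by the Morse argument; for $x$ deeper inside, the Morse argument alone applies. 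A further technical point is that all these perturbations must remain compactly supported in $M \setminus \Sigma$ and small in $C^m$ simultaneously, which is ensured by cutting off with fixed bump functions and absorbing the resulting bounded multiplicative constants into $\eps$, exactly as in the proof of Lemma \ref{lemma:kill-isometries}. Finally, one notes that the diffeomorphism in the definition of $\Omega_k$ may a priori be merely $C^1$, but since both metrics are $C^m$ it is automatically $C^{m+1}$ by \cite[Theorem 2.1]{Taylor-06}, so the scalar-curvature obstructions above genuinely apply.
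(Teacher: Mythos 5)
Your high-level idea — perturb the scalar curvature via the \textbf{(LPSC)} mechanism so that no local isometry can match the perturbed exterior with the fixed interior — is the right framework, and you correctly note that the regularity upgrade from $C^1$ to $C^{m+1}$ for a distance-preserving map follows from \cite[Theorem 2.1]{Taylor-06}. But the two concrete mechanisms you propose both fail, and you miss the one device that makes the paper's proof work.

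First, the ``collar'' argument is incompatible with the smallness constraint. You want to force $s_{e^{2(f_0+f)}g}$ to be ``large negative'' in an annulus near $\partial\Sigma$, so that the perturbed values are out of range of $s_g|_\Sigma$. But the conclusion requires $\|f\|_{C^m}<\eps$ with $\eps$ arbitrarily small, and $s_{e^{2f}g}-s_g$ is controlled (to leading order) by two derivatives of $f$; a small $f$ can only change the scalar curvature by a small amount, so the $C^0$-value of $s$ cannot be pushed out of the range attained on $\Sigma$. (A side point: since $f\in C^\infty_{\mathrm{comp}}(M\setminus\Sigma)$ you cannot touch the $C^\infty$-jet of $s$ at $\partial\Sigma$ at all, so ``altering the normal derivative of $s$ along $\partial\Sigma$ from the exterior side'' is literally impossible.) The device the paper uses to reconcile smallness of $f$ with a detectable scalar-curvature obstruction is to make a \emph{higher-order covariant derivative} of $s$ blow up: one chooses a $\frac{1}{10k}$-dense family of points $x_i\in M\setminus\Sigma$ at distance $>\frac{1}{100k}$ from $\partial\Sigma$, and sets $h_\delta=\sum_i\delta^{m-2+\frac1m}\chi(d_g(\cdot,x_i)/\delta)$ with $\chi^{(j)}(0)=0$ for $j\le m-2$ and $\chi^{(m-1)}(0)=-1$. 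This scaling makes $\|h_\delta\|_{C^{m-2,\alpha}}\to 0$ (so the conformal factor produced by Lemma~\ref{lemma: LPSC property} is small in $C^{m,\alpha}$) while $|\nabla^{m-1}h_\delta(x_i)|\sim\delta^{\frac1m-1}\to\infty$, eventually exceeding $\sup_\Sigma|\nabla^{m-1}_g s_g|$. Since every ball of radius $1/k$ centered in $M\setminus\Sigma^\circ$ — including one centered on $\partial\Sigma$ — contains some $x_i$, the pointwise comparison $|\nabla^{m-1}s_{e^{2f}g}(x_i)|>\sup_\Sigma|\nabla^{m-1}_g s_g|$ rules out all local isometries at once, with no case split.

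Second, the interior ``Morse'' argument does not rule out local isometries between $1/k$-balls. The Morse mechanism in Lemma~\ref{lemma:kill-isometries} is calibrated to kill \emph{global} isometries: one pins down finitely many critical points of $s_g$ and their connecting geodesics. Here the adversary is any ball $B(x,1/k)\cap(M\setminus\Sigma^\circ)$ against any ball $B(y,1/k)\cap\Sigma$, and there is no guarantee that either ball contains a critical point of the scalar curvature; if neither does, the argument you sketch (``mismatch of Hessian eigenvalues'' or ``no critical point to receive one'') gives nothing. Your suggestion that this is fixed by making the critical points dense enough would again require arranging their Hessian eigenvalues to disagree with \emph{all} eigenvalues occurring in $\Sigma$, which is another unbounded perturbation problem. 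Finally, the two-region splitting you propose creates exactly the ``transition region'' bookkeeping you worry about, while the paper's single scaled-bump construction handles all $x\in M\setminus\Sigma^\circ$ uniformly and dispenses with it.
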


It will be important in the proof that $g$ and $f_0$ are smooth (or at least $C^{m+1}$).

\begin{proof}First, up to changing $g$ by $e^{2f_0}g$, it suffices to prove this claim for $f_0=0$. Second, using Proposition \ref{proposition: perturb to get LPSC}, we can perform a perturbation $e^{2f}g$ of $g$ so that $e^{2f}g$ is \textbf{(LPSC)} and $f \in C^\infty_{\mathrm{comp}}(M \setminus \Sigma)$ has arbitrarily small $C^{m}$-norm. For the sake of simplicity, we still denote by $g$ the new perturbed metric which is \textbf{(LPSC)}.

Consider a set of distinct points $(x_i)_{i=1}^N$ in $M$ with the following properties:
\begin{enumerate}[label=(\roman*)]
    \item It is $\frac{1}{10k}$-dense with respect to the metric $g$, that is for every $x \in M$, there exists $1 \leq i \leq N$ such that $d_{g}(x,x_i) < \frac{1}{10k}$;
    \item For every $1 \leq i \leq N$, $d_{g}(x_i, \partial \Sigma) > \frac{1}{100k}$.
\end{enumerate}
Let $\eta := \min_{i \neq j} d_g(x_i, x_j)$. Note that the existence of such points $(x_i)_{i=1}^N$ is robust with respect to small $C^m$-perturbations of the metric $g$. Up to rearranging the $x_i$'s, we can further assume that $x_1, ..., x_{N_0} \in M \setminus \Sigma$ and $x_{N_0+1},...,x_N \in \Sigma$. 

Let $\chi \in C^\infty_{\mathrm{comp}}(\R)$ be a smooth even nonnegative function with support in $(-1,1)$. For $\delta > 0$ small enough (to be determined later), we set:
\begin{equation}
    \label{equation:def}
\chi_i(x) := \delta^{m-2+\frac{1}{m}} \chi\left(\dfrac{d_g(x,x_i)}{\delta}\right), \qquad h_{\delta} := \sum_{i=1}^{N_0} \chi_i.
\end{equation}
The function $h_\delta$ is a bump function with bumps localized at the points $x_i \in M \setminus \Sigma^\circ$ and supported in a $g$-geodesic neighborhood of size $\delta$ of these points. Note that $h_\delta$ is smooth as $\chi$ is even. Taking $\delta$ small enough (depending on $k$ and $\eta$), we get from $\eta$-separation of the $x_i$'s that $h_\delta(x) = \chi_i(x)$ for all $x \in B_g(x_i,\delta)$. In particular, using \eqref{equation:formula}, this yields that for all $X \in T_{x_i}M$ such that $|X|_g = 1$, $1 \leq i \leq N_0$ and $j \geq 0$:
\begin{equation}
    \label{equation:nablaj}
    \nabla^j_g h_\delta(x_i)(X,...,X) = \partial_t^j h_\delta(\gamma(t))|_{t = 0} = \delta^{m-2-j+\frac{1}{m}} \chi^{(j)}(0),
\end{equation}
where $\gamma(t) := \exp_{x_i}(tX)$.

We now make use of the assumption that $m$ is odd, and impose the condition $\chi^{(m-1)}(0) = -1, \chi^{(j)}(0) = 0$ for $j \leq m-2$. We claim that there exists a constant $C := C(g,k)>0$ such that:
\begin{equation}
    \label{equation:bounds}
    \|h_{\delta}\|_{C^{m-2}} \leq C \delta^{\frac{1}{m}}, \quad \|h_{\delta}\|_{C^{m-1}} \leq C \delta^{\frac{1}{m}-1}, \quad |\nabla^{m-1}_{g} h_{\delta}(x_i)|_g \geq C \delta^{\frac{1}{m}-1}~~ (1 \leq i \leq N_0),
\end{equation}
where we recall that all $C^k$-norms are computed with respect to $g$ (as introduced in \S\ref{ssection:functional-spaces}). Indeed, the first two inequalities in \eqref{equation:bounds} follow from the definition \eqref{equation:def} while the third follows from \eqref{equation:nablaj}.

%By Lemma \ref{lemma:comparison}, \eqref{equation:bounds} still holds with $C^m$-norms computed with respect to an arbitrary metric $g_0$, up to changing the constant $C > 0$.

%{\color{red}comment here on the change of norms $g$ by $g_0$ if $\|g-g_0\|_{C^m} < 1$. Is it needed?}

By interpolation, we therefore obtain $\|h_{\delta}\|_{C^{m-2,\alpha}} \leq C \delta^{\frac{1}{m}-\alpha}$. We take $\alpha := \frac{1}{2m}$ and apply Lemma \ref{lemma: LPSC property}. This provides $f_{\delta} \in C^\infty_{0}(M\setminus \Sigma)$ (vanishing to order $0$ at $\partial \Sigma$) such that
\begin{equation}
    \label{equation:prescription}
s_{e^{2f}g} = s_g + h_\delta, \qquad \|f_{\delta}\|_{C^{m,\alpha}} \leq C \|h_{\delta}\|_{C^{m-2,\alpha}} \leq C \delta^{\frac{1}{m}-\alpha} = C \delta^{\frac{1}{2m}},
\end{equation}
where $C:=C(g,k) > 0$ is independent of $\delta$.

We then multiply by a cutoff function in order to obtain functions with compact support in $M \setminus \Sigma$. By assumption, $d(x_i,\partial \Sigma) > \frac{1}{100k}$ for all $1 \leq i \leq N$ and $\chi_i$ is supported in a ball of radius $\delta$ around $x_i$; hence, taking $\delta < \frac{1}{1000k}$, we can guarantee that $h_\delta$ is supported in
\[
V := \left\{x \in M\setminus \Sigma^{\circ} ~|~ d(x,\partial \Sigma) > \tfrac{9}{10} \cdot \tfrac{1}{100k} \right\}.
\]
Now, letting $\psi \in C^\infty_{\mathrm{comp}}(M\setminus \Sigma^\circ)$ be a function such that $\psi \equiv 1$ on $V$ and $\psi$ is supported in $V' := \left\{x \in M\setminus \Sigma^{\circ} ~|~ d(x,\partial \Sigma) > \tfrac{4}{5} \cdot \frac{1}{100k} \right\}$, we see that $f_{\delta} \psi$ satisfies a similar estimate to \eqref{equation:prescription} (with a possibly different constant $C := C(g,k) >0$), that is
\begin{equation}
\label{equation:control}
\|f_{\delta} \psi\|_{C^{m,\alpha}} \leq  C \delta^{\frac{1}{2m}}.
\end{equation}
Moreover, $s_{e^{2 f_{\delta} \psi} g} = s_{e^{2f_{\delta}}} = s_g + h_\delta$ on $V$. We then take $\delta > 0$ small enough such that:
\begin{enumerate}[label=(\roman*)]
    \item $C \delta^{\frac{1}{2m}} < \eps$, which guarantees by \eqref{equation:control} that $\|f_\delta \psi\|_{C^{m,\alpha}} < \eps$;
    \item $\tfrac{1}{2} C \delta^{\frac{1}{m}-1} > \sup_{x \in \Sigma} |\nabla^{m-1}_g s_g(x)| + 1/2 \cdot \sup_{x_i \in M \setminus \Sigma^{\circ}} |\nabla^{m-1}_g s_g(x_i)|$.
\end{enumerate}
We claim that this choice ensures $f_\delta \psi \in \Omega_k$. Indeed, assume for a contradiction that there exists an isometry
\[
\phi\colon (B_{e^{2f_\delta \psi}g}(x,1/k) \cap (M \setminus \Sigma^\circ), e^{2f_\delta \psi}g) \to (B_{g}(y,1/k) \cap \Sigma,g).
\]
For $\delta$ small enough, the points $(x_i)_{1 \leq i \leq N}$ still satisfy (i-ii) above for the metric $e^{2f_\delta \psi}g$; hence, there exists a point $x_i \in B_{e^{2f_\delta \psi}g}(x,1/k) \cap (M \setminus \Sigma^\circ)$. Let $y_i := \phi(x_i)$. Then $|\nabla^{m-1}_{e^{2f_\delta \psi}g} s_{e^{2f_\delta \psi}g}(x_i)|_{e^{2f_\delta \psi}g} = |\nabla^{m-1}_g s_g (y_i)|_g$. But the following holds:
\[
\begin{split}
|\nabla^{m-1}_g s_g (y_i)|_g & = |\nabla^{m-1}_{e^{2f_\delta \psi}g} s_{e^{2f_\delta \psi}g}(x_i)|_{e^{2f_\delta \psi}g} \\
& = |\nabla^{m-1}_{e^{2f_\delta}g} s_{e^{2f_\delta}g}(x_i)|_{e^{2f_\delta}g}  \\
& = |\nabla^{m-1}_{e^{2f_\delta}g} (s_g + h_\delta)(x_i)|_{e^{2f_\delta}g} \\
&\geq 1/2 \cdot |\nabla^{m-1}_{g} (s_g + h_\delta)(x_i)|_{g} \\
& \geq 1/2 \cdot C \delta^{\frac{1}{m}-1} - 1/2 \sup_{x_i \in M \setminus \Sigma^\circ} |\nabla^{m-1}_{g} s_g(x_i)|_g  > \sup_{x \in \Sigma} |\nabla^{m-1}_{g} s_g(x)|_g.
\end{split}
\]
where the first inequality follows from Lemma \ref{lemma:comparison}, the second from \eqref{equation:bounds}, and the third from item (ii) above in the choice of $\delta$. This is a contradiction so $f_\delta \psi \in \Omega_k$. \end{proof}

We can then complete the proof of Proposition \ref{proposition:technical} by applying a Baire category type of argument to the intersection $\cap_{k \geq 1} \Omega_k$. 

%{\color{red}Here is the argument, but I think that we can just skip it in the final version of the paper as it's very classical:}
%We now keep track of the metric $g$ with which the $C^m$-norms are computed (and we write $\|f\|_{C^m_g}$). We fix $\eps > 0$ and construct a sequence $f_1,...f_n \in C^\infty_{\mathrm{comp}}(M \setminus \Sigma)$ such that $\|f_i\|_{C^m_g} < \eps/2^i$ and $f_1 + ... + f_n \in \Omega_n$ as follows. The first function $f_1 \in C^\infty_{\mathrm{comp}}(M \setminus \Sigma)$ is given by Claim \ref{claim:2}. Assume that $f_1,...f_n$ has been constructed. Since $\Omega_n$ is open (with respect to the $C^m$-topology) by Claim \ref{claim:2}, there exists $\eps_0 > 0$ such that for $f \in C^\infty_{\mathrm{comp}}(M \setminus \Sigma)$ $f_1+...f_n+f \in \Omega_n$. By Claim \ref{claim:2} applied with $e^{2(f_1+...+f_n)}g$ in place of $g$, for all $\eps >0$, one can find $f$ such that $\|f\|_{C^m_{e^{2(f_1+...+f_n)}g}} < \eps$ and $f_1+...+f_n+f \in \Omega_{n+1}$. But there exists $C := C(n) > 0$ such that $\|f\|_{C^m_g} \leq C \|f\|_{C^m_{e^{2(f_1+...+f_n)}g}}$. We can thus find $f_{n+1} \in C^\infty_{\mathrm{comp}}(M \setminus \Sigma)$ such that $\|f_{n+1}\|_{C^m_g} < \eps/2^{n+1}$ and $f_1+...+f_{n+1} \in \Omega_{n+1}$. We then set $f := \sum_{i \geq 1} f_i$ which belongs to $\cap_{k \geq 1} \Omega_k$ and satisfies the required properties. Observe however that the $C^{m+1}$ norm of the $f_i$'s blows up in the construction.
\end{proof}

Note that, since the construction in Claim \ref{claim:2} consists in making the $C^{m-1}$-norm of $s_{e^{2f}g}$ blow-up, the $C^{m+1}$ norm of $e^{2f}g$ blows-up, and thus the function $f \in C^m_{0;m}(M\setminus \Sigma)$ provided by Proposition \ref{proposition:technical} will clearly not be $C^{m+1}$-regular.

\section{Proof of main results}

\subsection{Proof of the transfer principle}

We now prove Theorem \ref{theorem:transfer}. We start with the following preliminary lemma:

\begin{lemma}
\label{lemma:same-mls}
Let $\Sigma$ be a smooth compact connected oriented manifold with boundary. Let $g_1$ and $g_2$ be two metrics of Anosov type on $\Sigma$ such that $d_{g_1} = d_{g_2}$, and further assume that $g_1$ and $g_2$ are consistently extendable to Anosov metrics $g_1'$ and $g_2'$ on a closed manifold $M$. Then the two extended metrics have same marked length spectrum on $M$, that is $L_{g'_1} = L_{g'_2}$.
\end{lemma}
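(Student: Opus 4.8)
The plan is to pass to the universal cover $p \colon \widetilde{M} \to M$, carrying the lifts $\widetilde{g_1'}, \widetilde{g_2'}$ of $g_1', g_2'$; these are complete (being lifts of metrics on the closed manifold $M$), and since $g_1' = g_2'$ on $M \setminus \Sigma$ they agree, by continuity, on all of $p^{-1}(M \setminus \Sigma^\circ)$. Fix $c \in \mc{C}$ and a representative $\alpha \in \pi_1(M)$. Recall that for an Anosov metric $g'$ the length $L_{g'}(c)$ of the unique closed geodesic in the class $c$ equals the stable translation length of $\alpha$,
\begin{equation*}
\tau_{\widetilde{g'}}(\alpha) := \lim_{k \to \infty} \tfrac{1}{k}\, d_{\widetilde{g'}}(\widetilde{x}, \alpha^k \widetilde{x}),
\end{equation*}
a limit which exists by subadditivity of $k \mapsto d_{\widetilde{g'}}(\widetilde{x}, \alpha^k \widetilde{x})$ and is independent of the basepoint $\widetilde{x} \in \widetilde{M}$ (indeed $|d_{\widetilde{g'}}(\widetilde{x}, \alpha^k \widetilde{x}) - d_{\widetilde{g'}}(\widetilde{y}, \alpha^k \widetilde{y})| \le 2\, d_{\widetilde{g'}}(\widetilde{x}, \widetilde{y})$ by the triangle inequality). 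Since $\Sigma \subsetneq M$, the set $M \setminus \Sigma$ is non-empty and open, and a deck transformation maps $p^{-1}(M \setminus \Sigma)$ to itself; hence the lemma will follow once I prove that
\begin{equation*}
d_{\widetilde{g_1'}}(\widetilde{p}, \widetilde{q}) = d_{\widetilde{g_2'}}(\widetilde{p}, \widetilde{q}) \qquad \text{for all } \widetilde{p}, \widetilde{q} \in p^{-1}(M \setminus \Sigma),
\end{equation*}
for then, picking $\widetilde{x} \in p^{-1}(M \setminus \Sigma)$ and using that $\alpha^k \widetilde{x} \in p^{-1}(M \setminus \Sigma)$ for all $k$, one gets $\tau_{\widetilde{g_1'}}(\alpha) = \tau_{\widetilde{g_2'}}(\alpha)$, i.e. $L_{g_1'}(c) = L_{g_2'}(c)$ for every $c$.

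To establish this distance identity I would perform a gluing (``surgery'') on a minimizing geodesic. First observe that consistent extendability forces $g_1$ and $g_2$ to have the same $\infty$-jet along $\partial \Sigma$ (the extensions agree to infinite order on the $M \setminus \Sigma$ side and are smooth across $\partial \Sigma$); in particular $g_1 = g_2$ on $T_{\partial \Sigma} \Sigma$, so Lemma \ref{lemma:lens} applies: the lens data of the lifts $\widetilde{g}_1, \widetilde{g}_2$ to the universal cover $\widetilde{\Sigma}$ agree, equivalently in the homotopy-class form \eqref{equation:lens-homotopy}. Now fix $\widetilde{p}, \widetilde{q} \in p^{-1}(M \setminus \Sigma)$ and let $\sigma$ be a length-minimizing $\widetilde{g_1'}$-geodesic from $\widetilde{p}$ to $\widetilde{q}$ (which exists by completeness). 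Using strict convexity of $\partial \Sigma$ for $g_1$, the finite-length segment $\sigma$ meets $p^{-1}(\partial \Sigma)$ at finitely many parameters, which cut $\sigma$ into finitely many sub-arcs, each being either an ``outside'' arc lying in $p^{-1}(M \setminus \Sigma^\circ)$, or an ``inside'' arc, i.e. a $\widetilde{g_1}$-geodesic chord of a single connected component $W$ of $p^{-1}(\Sigma)$ with both endpoints on $\partial W$ (the endpoints $\widetilde{p}, \widetilde{q}$ lie outside $p^{-1}(\Sigma)$, so $\sigma$ begins and ends with outside arcs). I leave the outside arcs unchanged --- there $\widetilde{g_1'} = \widetilde{g_2'}$ --- and replace each inside arc as follows: lift the chord through the covering $\widetilde{\Sigma} \to W$ to a $\widetilde{g}_1$-geodesic arc of $\widetilde{\Sigma}$ between two boundary points; by \eqref{equation:lens-homotopy}, the $\widetilde{g}_2$-geodesic arc with the same initial point and velocity exits at the same boundary point, with the same exit velocity, and has the same length; push this $\widetilde{g}_2$-arc back down to $W$ (the replacement being equivariant for the deck group of $\widetilde{\Sigma} \to W$, hence well defined). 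Concatenating the unchanged outside arcs with the replaced inside ones yields a curve $\sigma'$ from $\widetilde{p}$ to $\widetilde{q}$ with $\ell_{\widetilde{g_2'}}(\sigma') = \ell_{\widetilde{g_1'}}(\sigma) = d_{\widetilde{g_1'}}(\widetilde{p}, \widetilde{q})$, so $d_{\widetilde{g_2'}}(\widetilde{p}, \widetilde{q}) \le d_{\widetilde{g_1'}}(\widetilde{p}, \widetilde{q})$; exchanging the roles of $g_1$ and $g_2$ gives the reverse inequality, hence equality. (Because the velocities match at each gluing point, $\sigma'$ is in fact $C^1$, and, using strict convexity of $\partial \Sigma$ for $g_2$ together with uniqueness of geodesics issued from a prescribed $1$-jet, even a genuine $\widetilde{g_2'}$-geodesic; but only the equality of lengths matters here.)

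The step I expect to require the most care is the decomposition of the minimizing segment $\sigma$: one must verify that $\sigma$ crosses the hypersurface $p^{-1}(\partial \Sigma)$ transversally at all but finitely many of its parameters, the finitely many tangential contacts being convex ``grazings'' that can be absorbed into outside arcs; this is precisely where strict convexity of $\partial \Sigma$ for \emph{both} metrics is used (for $\sigma$ itself, and to know the replacement chords enter and leave $W$ transversally), together with the elementary fact that a finite-length geodesic in a manifold with strictly convex boundary hits the boundary only finitely often. The only other external input is the identity $L_{g'}(c) = \tau_{\widetilde{g'}}(\alpha)$ for Anosov metrics, which is standard. I stress that the argument handles all free homotopy classes $c$ uniformly, \emph{including} those whose closed geodesic is trapped in $\Sigma^\circ$: such a class is never examined directly, since $\tau_{\widetilde{g_i'}}(\alpha)$ is computed from a basepoint lying in $p^{-1}(M \setminus \Sigma)$, where the two lifted distance functions have already been shown to coincide.
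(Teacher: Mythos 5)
Your proof is correct, and it takes a genuinely different (and arguably cleaner) route than the paper. The paper argues directly on $M$, classifying the free homotopy class $c$ according to whether the $g_1'$-geodesic representative $\gamma_c^1$ lies in $M\setminus\Sigma^\circ$ (trivial), crosses $\partial\Sigma$ transversally (tracked across $\Sigma$ using the lens data of Lemma~\ref{lemma:lens}), or is trapped in $\Sigma^\circ$; this last case is then handled by a separate approximation argument, comparing $d_{g_i}(x,x,[\eta_n])/n$ as $n\to\infty$ for a boundary point $x$ and curves $\eta_n$ that wind $n$ times around $\gamma_c^1$. You instead pass to $\widetilde M$ and invoke the identity $L_{g'}(c)=\tau_{\widetilde{g'}}(\alpha)$ together with basepoint-independence of the stable translation length; choosing the basepoint in $p^{-1}(M\setminus\Sigma)$ reduces everything to the single distance identity $d_{\widetilde{g_1'}}=d_{\widetilde{g_2'}}$ on $p^{-1}(M\setminus\Sigma)$, which you prove by the same lens-data surgery that underlies the paper's case (ii). The upshot is that your argument makes no distinction between trapped and non-trapped classes: the trapped case, which is the most delicate one for the paper, disappears entirely because the basepoint is placed outside $\Sigma$. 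The small price you pay is having to justify that $L_{g'}(c)=\tau_{\widetilde{g'}}(\alpha)$ for Anosov metrics (a standard consequence of the absence of conjugate points on the universal cover and minimality of lifted geodesics) and having to verify that a finite-length geodesic in $\widetilde M$ meets $p^{-1}(\partial\Sigma)$ only finitely often with tangencies absorbed into outside arcs, both of which you correctly reduce to strict convexity. A minor remark: to apply Lemma~\ref{lemma:lens} you correctly observe that consistent extendability forces the $C^\infty$-jets of $g_1$ and $g_2$ to agree on $\partial\Sigma$, which is exactly the hypothesis $g_1=g_2$ on $T_{\partial\Sigma}\Sigma$ needed there; this is the same observation made just after the definition of consistent extendability in the paper.
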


\begin{proof}
Denote by $\mc{C}$ the set of free homotopy classes on $M$, let $c \in \mc{C}$ and denote by $\gamma_c^1$ the $g'_1$-geodesic representative of $c$. If $\gamma_c^1$ never intersects $\partial \Sigma$, then it is either contained in $\Sigma^\circ$ or in $\left(M\setminus \Sigma\right)^\circ$. If $\gamma_c^1 \cap \partial \Sigma \neq \emptyset$, then there are two cases: either there exists a transverse intersection point, in which case $\gamma_c^1 \cap \Sigma^\circ \neq \emptyset$ and $\gamma_c^1 \cap \left(M \setminus \Sigma\right)^\circ \neq \emptyset$, or all intersection points of $\gamma_c^1 \cap \partial \Sigma$ are tangential intersections, in which case $\gamma_c^1 \subset M \setminus \Sigma^\circ$ by strict convexity of the boundary $\partial \Sigma$ with respect to $g_1$. Hence, there are three cases to consider: (i) if $\gamma_c^1$, the $g'_1$-geodesic representative of $c$, is contained in $M\setminus
\Sigma^\circ$, (ii) if it intersects the boundary of $\Sigma$ and at least one of the intersection points is transverse, (iii) if it is contained in $\Sigma$. \\

\textbf{Case (i).} Since $g_1$ and $g_2$ are consistently extendable and there is a unique geodesic representative for each $c\in\mc{C}$, the result is immediate as the geodesic representatives of $c$ for $g'_1$ and $g'_2$ coincide on $M \setminus \Sigma^\circ$. \\

\textbf{Case (ii).} We now assume that $\gamma^1_c$ intersects non-trivially both $\Sigma^\circ$ and $\left(M \setminus \Sigma\right)^\circ$. Let $\ell$ be the $g_1'$-length of $\gamma^1_c$ and choose a parametrization $\gamma^1 : [0,\ell] \to M$ such that $x := \gamma^1_c(0) \in M \setminus \Sigma$ (we drop the subscript $c$ for simplicity). Let $v := \dot{\gamma}^1(0) \in SM_1$. By our assumption and the strict convexity of $\partial \Sigma$ with respect to $g_1$, there are times $0 < t_1^- < t_1^+ < t_2^- < t_2^+ < ... < t_N^- < t_N^+ < \ell$ such that for all $1 \leq i \leq N$, $\gamma^1(t_i^-), \gamma^1(t_i^+) \in \partial \Sigma$ and $\gamma^1((t_i^-,t_i^+)) \subset \Sigma^\circ$. The point $(x,v)$ is also in $SM_2$, the unit tangent bundle of $g_2'$, since $g_1'$ and $g_2'$ are equal on $M \setminus \Sigma^\circ$ (see Figure \ref{figure: case ii}).

We claim that the $g_2'$-geodesic generated by $(x,v) \in S\Sigma_2$ is closed, of length $\ell$ and in the homotopy class $c$. For the sake of clarity, let us run the argument with $N=1$, the generalization to $N \geq 2$ being immediate. Denote by $\gamma^2 : [0,\ell] \to M$ the $g_2'$-geodesic segment (of length $\ell$) generated by $(x,v) \in SM_2$. Since $g_1'=g_2'$ on $M\setminus \Sigma^\circ$, it is clear that $\gamma^1(t)=\gamma^2(t)$ for all $0 \leq t \leq t_1^-$. In particular, $\gamma^1(t_1^-)=\gamma^2(t_1^-)=:p \in \partial \Sigma$ and $\dot{\gamma}^1(t_1^-)=\dot{\gamma}^2(t_1^-)=:w \in T_p\Sigma$. Since $g_1$ and $g_2$ are consistently extendable, their $C^\infty$-jets agree on $\partial \Sigma$; in particular, we can apply Lemma \ref{lemma:lens} as $g_1=g_2$ on $T_{\partial \Sigma} \Sigma \times T_{\partial \Sigma} \Sigma$. We thus obtain that the $g_1'$-geodesic and $g_2'$-geodesic generated by $(p,w)$ in $\Sigma$ have same length $\ell_1 := t_1^+-t_1^-$, exit $\Sigma$ at the same point $q \in \partial \Sigma$ with same unit speed vector $z \in T_q\Sigma$, and belong to the same homotopy class of curves joining $p$ to $q$. In other words, $\gamma^1(t_1^+) = \gamma^2(t_1^+) = q$, $\dot{\gamma}^1(t_1^+) = \dot{\gamma}^2(t_1^+) = z$, and the geodesic segments $\gamma^1([t_1^-,t_1^+])$ and $\gamma^2([t_1^-,t_1^+])$ are homotopic (via a homotopy fixing their endpoints). Using once again that $g_1'=g_2'$ on $M \setminus \Sigma^\circ$, we deduce that $\gamma^1(t) = \gamma^2(t)$ for all $t_1^+ \leq t \leq \ell$. Moreover, since $\gamma^1$ closes up, that is $\dot{\gamma}^1(0) = \dot{\gamma}^1(\ell)$, we get that $\gamma^2$ closes up too, so $\gamma^2$ is a closed $g_2'$-geodesic of length $\ell$ (and not shorter). Finally, $\gamma^2$ is in the same free homotopy class as $\gamma^1$ (since it coincides with $\gamma^1$ on $M \setminus \Sigma$ and is in the same homotopy class on $\Sigma$) and this shows that $\gamma^2$ is the unique closed $g_2'$-geodesic in $c$. The generalization of this argument to $N \geq 2$ is immediate. This proves the claim. 

\begin{figure}[H]
      \centering
      \begin{subfigure}{.5\textwidth}
            \centering
			\begin{tikzpicture}[x={(1cm,0cm)},y={(-.2cm,-.2cm)},z={(0cm,1cm)},scale=0.9]

                \node[] at (-2,0,1.8) {\large $\Sigma$};
                \node[] at (5.3,1,2.3) {\large $M\setminus\Sigma$};
    			\draw[thick] (0,0,-0.5) to[bend left] (0,0,0.5) node at (0.2,0,0.8) {\large$\partial\Sigma$};
				\draw[dashed,thick] (0,0,-0.5) to[bend right] (0,0,0.5);
				\draw[rotate=270] (-.5,0,0) to[out=-90,in=90] (-1.5,0,-2);
				\draw[rotate=270] (-1.5,0,-2) to[out=-90, in=-90] (1.5,0,-2);
				\draw[rotate=270] (.5,0,0) to[out=-90,in=90] (1.5,0,-2);

                \draw [line width=0.3mm, red] plot [smooth, tension=.7] coordinates {(-.45,-1,-.05)(-2,-1,1)(-2.8,-1,0)(-2,-1,-1)(-0.05,1,0)};
                \draw [line width=0.3mm, red] (-0.05,1,0) to [bend left](0.1,1,0);
                \draw [line width=0.3mm, red] (-.45,-1,-.05) to [bend right](-.3,-1,0);
                \draw[->,thick,red](-2.8,-1,0)--(-2.8,-1,-0.1);

                \draw[->] (-3,0,2.2)--(-2.1,0,1.2);
                \node[red] at (-3,0,2.5) {$\gamma^1([t_1^-,t_1^+])$};

                \draw [line width=0.3mm, green!60!red] plot [smooth, tension=.7] coordinates {(-.46,-1,-.05)(-2,-1,0.8)(-2.5,-1,0)(-2,-1,-0.8)(-0.07,1,0)};
                \draw [line width=0.3mm, green!60!red] (-0.07,1,0) to [bend left](0.1,1,0);
                \draw [line width=0.3mm, green!60!red] (-.46,-1,-.05) to [bend right](-.3,-1,0);
                \draw[->] (-3,0,-1.8)--(-2.1,0,-.5);
                \node[green!60!red] at (-3,0,-2) {$\gamma^2([t_1^-,t_1^+])$};
                 \draw[->,thick,green!60!red](-2.5,-1,0)--(-2.5,-1,-0.1);

                \draw [fill=blue] (-.3,-1,0) circle (2pt);
                \draw [->, thick] (-.6,-1,1)--(-.35,-1,0.2);
                \node[blue] at (-.6,-1,1.3) {\large$\gamma^1(t_1^-)$};
                
                \draw [fill=blue] (0.1,1,0) circle (2pt);
                \draw [->, thick] (-0.4,1,-1)--(0.05,1,-0.1);
                \node[blue] at (-0.4,1,-1.5) {\large$\gamma^1(t_1^+)$};
                
                \draw [fill=blue] (4.2,-1,0.5) circle (2pt); 
                \node[thick,blue] at (4.2,-1,0.9) {\large$x=\gamma^1(0)$};
                \draw[->,thick,blue] (2.5,-2,0.2)--(2.4,-2,0.2); 
                 \draw[->,thick,blue] (5,1,0)--(5.1,1,0); 
                
 %               \draw [line width=0.3mm, blue] plot [smooth] coordinates {(-.3,-1,0) (1,-1,0.5) (2,-3,1) (3,-4,2) (3.5,-6, 1.7) (5,1,0) (0.1,1,0)};
            \draw [line width=0.3mm, blue] plot [smooth] coordinates {(-.3,-1,0) (1,-1,0.2) (2.5,-2,0.2) (4.2,-1,0.5)  (5,1,0) (0.1,1,0)};

            \draw[rotate=100] (-.5,0,0) to[out=-100,in=130] (-.5,2,-5.5);
            \draw[rotate=100] (.5,0,0) to[out=-100,in=90] (.5,-2,-5.5);
			\draw[rotate=100] (-.5,2,-5.5) to[out=-50,in=-90] (.5,-2,-5.5);

				\draw (-1,0,0) to[bend left] (-2,0,0);
				\draw (-1.2,0,-0.1) to[bend right] (-1.8,0,-0.1);
		
				\draw (1,0,0) to[bend right] (2,0,0);
				\draw (1.2,0,-0.1) to[bend left] (1.8,0,-0.1);

                \draw (3.5,0,0.2) to[bend right] (4.5,0,0.2);
				\draw (3.7,0,.1) to[bend left] (4.3,0,.1);

               % \draw (3.5,0,-2) to[bend right] (4.5,0,-2);
			%	\draw (3.7,0,-2.1) to[bend left] (4.3,0,-2.1);
		\end{tikzpicture}
      \caption{The case when $\gamma^1$ intersects non-trivially both $\Sigma^\circ$ and $\left(M \setminus \Sigma\right)^\circ$.}
      \label{figure: case ii}
      \end{subfigure}%
      \begin{subfigure}{.5\textwidth}
            \centering
            \hspace*{1em}\raisebox{1.5em}{\begin{tikzpicture}[x={(1cm,0cm)},y={(-.2cm,-.2cm)},z={(0cm,1cm)},scale=1]

                \node[] at (-1.8,0,1.8) {\large $\Sigma$};
    			\draw[rotate=90] (0,0,-0.5) to[bend left] (0,0,0.5) node at (0,0,1) {\large$\partial\Sigma$};
				\draw[dashed,rotate=90] (0,0,-0.5) to[bend right] (0,0,0.5);
				\draw[rotate=180] (-.5,0,0) to[out=-90,in=90] (-1.5,0,-2);
				\draw[rotate=180] (-1.5,0,-2) to[out=-90, in=-90] (1.5,0,-2);
				\draw[rotate=180] (.5,0,0) to[out=-90,in=90] (1.5,0,-2);
                \draw (-0.5,0.5,1.5) to[bend right] (.5,0.5,1.5);
				\draw (-0.3,0.5,1.4) to[bend left] (.2,0.5,1.4);

                 \draw [red] (0,0.5,1.5) circle (18pt) node at (0,0.5,2.5) {\large$\gamma_c^1$}; 
                 \draw[->,thick,red] (0.65,0.5,1.5);
                 \draw[->,thick,red] (-0.65,0.5,1.5)--(-0.65,0.5,1.4);
                 \draw [fill=red] (0,0,-.15) circle (1.5pt) node at (0,0,-.45) {\large$x$};
                 \draw [fill=red] (0,0,0.75) circle (1.5pt) node at (0.2,0,0.6) {\large$p$};
                 \draw[red](0,0,-.15)--(0,0,0.75);
		\end{tikzpicture}}
      \caption{The case when $\gamma^1_c$ is in $\Sigma$.}
      \label{figure: case iii}
       \end{subfigure}
       \caption{}
\end{figure}

\textbf{Case (iii).} Assume now that $\gamma^1_c$ is in $\Sigma$. Let $x\in\partial\Sigma$ and $p\in \gamma^1_c$ be a closest point to $x$. Let $\eta_n$ be a curve that starts at $x$, then follows a shortest geodesic $[x,p]$ from $x$ to $p$, then goes $n$ times around $\gamma^1_c$, and then comes back to $x$ along the same geodesic $[p,x]$ (see Figure \ref{figure: case iii}). Denote by $[\eta_n]$ the homotopy class with fixed endpoints being $x$ of $\eta_n$. By \cite[Lemma 2.2]{Guillarmou-Mazzucchelli-18}, $d_{g_1}(x,x,[\eta_n]])\leq n\ell_{g_1}(\gamma^1_c)+2d_{g_1}(x,\gamma^1_c)$. Let $\gamma_{x,x}^1$ be the unique $g_1$-geodesic from $x$ to $x$ in the class $[\eta_n]$. Notice that $\gamma^1_{x,x}$ is a curve in the free homotopy class $nc$ so $\ell_{g_1}(\gamma_{x,x})\geq n\ell_{g_1}(\gamma^1_c)$. Thus, we have $d_{g_1}(x,x,[\eta_n])/n \xrightarrow[]{n \to \infty} \ell_{g_1}(\gamma_c^1)$ and, similarly, $d_{g_2}(x,x,[\eta_n])/n \xrightarrow[]{n \to \infty}\ell_{g_2}(\gamma_c^2)$. Since $d_{g_1}=d_{g_2}$, we obtain that $\ell_{g_1}(\gamma^1_c)=\ell_{g_2}(\gamma^2_c)$.

\end{proof}

We now prove Theorem \ref{theorem:transfer}:

\begin{proof}[Proof of Theorem \ref{theorem:transfer}]
By assumption, there exist extensions $g_1'$ and $g_2'$ to the closed manifold $M$ such that $g_1',g_2'$ are both Anosov, and $g_1'=g_2'$ on $M \setminus \Sigma^\circ$. Fix $\eps_0 > 0$ small enough such that any metric $g$ on $M$ such that $\|g-g_1'\|_{C^2} < \eps_0$ or $\|g-g_2'\|_{C^2} < \eps_0$ is still Anosov (such an $\eps_0 > 0$ exists by Anosov structural stability). By Lemma \ref{lemma:kill-isometries}, for any fixed $\eps > 0$, there exists $f \in C^\infty_0(M \setminus \Sigma)$, vanishing to infinite order on $\partial \Sigma$ such that $\|f\|_{C^2} < \eps$, the metric $e^{2f} g_1'$ has no isometries in $M \setminus \Sigma^\circ$. Moreover, by Lemma \ref{lemma:kill-isometries} again, there exists $\delta > 0$ such that for any $\varphi \in C^4(M \setminus \Sigma^\circ)$ such that $\|\varphi\|_{C^4} < \delta$, the metric $e^{2\varphi} e^{2f} g_1'$ has no isometry in $M \setminus \Sigma$ too. We apply this with $\eps$, $\delta > 0$ small enough so that $\|e^{2\varphi}e^{2f}g_1'-g_1'\|_{C^2(M)} < \eps_0$. Then, we apply Proposition \ref{proposition:technical} with $m \geq k_0$ (where $k_0$ is provided by the injectivity of the marked length spectrum on $M$ in finite regularity) in order to obtain a function $\varphi \in C^m_{0;m}(M\setminus \Sigma^\circ)$ such that:
\begin{enumerate}[label=(\roman*)]
    \item $\|\varphi\|_{C^m} < \delta$ so that $e^{2\varphi} e^{2f} g_1'$ has no global isometries in $M \setminus \Sigma^\circ$;
    \item $e^{2\varphi} e^{2f} g_1'$ satisfies that there are no local isometries between $\Sigma$ and $M \setminus \Sigma^\circ$ (cf. statement of Proposition \ref{proposition:technical});
    \item $\varphi$ is small enough so that $\|e^{2\varphi}e^{2f}g_1'-g_1'\|_{C^2(M)} < \eps_0$ and thus the metric $e^{2\varphi}e^{2f}g_1'$ is Anosov on $M$ (and the same holds for $g_1'$ replaced by $g_2'$).
\end{enumerate}
The metrics $g_1'' := e^{2\varphi} e^{2f} g_1'$ and $g_2'' := e^{2\varphi} e^{2f} g_2'$ are both Anosov and $C^m$-regular on $M$. By Lemma \ref{lemma:same-mls}, they have same marked length spectrum on $M$. Hence, by marked length spectrum rigidity on $M$ (in finite regularity $m$), there exists a $C^{m+1}$-regular diffeomorphism $\phi : M \to M$, isotopic to the identity, such that $g_2'' = \phi^*g_1''$.

We claim that $\phi(\Sigma) = \Sigma$ and $\phi|_{\partial \Sigma} = \mathbf{1}_{\partial \Sigma}$. First, if $\phi(\Sigma) \neq \Sigma$, there exists $y \in \Sigma^\circ$ and $x \in M \setminus \Sigma^\circ$, and open neighborhoods $U_y \subset \Sigma^\circ$ and $U_x \subset M \setminus \Sigma^\circ$ such that $\phi : U_y \to U_x$ is an isometry for $g_1''$ (since $g_2''=g_1''$ on $U_x$). But this contradicts the non-existence of local isometries provided by Proposition \ref{proposition:technical}. Hence $\phi(\Sigma)=\Sigma$ and thus $\phi(M\setminus \Sigma^\circ) = M\setminus \Sigma^\circ$. But then, $\phi$ is an isometry for $g_1''$ on $M\setminus \Sigma^\circ$ so by construction, this forces $\phi$ to be the identity on $M \setminus \Sigma^\circ$. In particular, $\phi|_{\partial \Sigma} = \mathbf{1}_{\partial \Sigma}$. Also, observe that $\phi\colon \Sigma \to \Sigma$ is an isometry (that is, $\phi^*g_1 = g_2$) so by \cite[Theorem 2.1]{Taylor-06}, we get that $\phi$ is smooth if $g_1$ and $g_2$ are smooth. This concludes the proof. 
\end{proof}

\subsection{Mapping class of the isometry}

\label{ssection:topology}

We now further discuss the mapping class of the isometry provided by Theorem \ref{theorem:transfer}. We denote by $\mathrm{Mod}(\Sigma) := \mathrm{Diff}(\Sigma,\partial \Sigma) / \mathrm{Diff}_0(\Sigma,\partial \Sigma)$ the mapping class group of $\Sigma$, that is diffeomorphisms fixing the boundary modulo diffeomorphisms isotopic to the identity via an isotopy fixing the boundary, and by $\mathrm{Mod}(M) := \mathrm{Diff}^+(M)/\mathrm{Diff}_0(M)$ the orientation-preserving diffeomorphisms on $M$ modulo the ones isotopic to the identity.

The embedding $\Sigma \hookrightarrow M$ induces a homomorphism
\begin{equation}
    \label{equation:eta}
\eta \colon \mathrm{Mod}(\Sigma) \to \mathrm{Mod}(M)
\end{equation}
defined as follows: given a representative $\phi \in \mathrm{Diff}(\Sigma,\partial \Sigma)$ of $[\phi] \in \mathrm{Mod}(\Sigma)$, one can isotope it to a diffeomorphism $\psi \colon \Sigma \to \Sigma$ via an isotopy preserving the boundary such that $\psi=\mathbf{1}$ on a collar neighborhood of $\partial \Sigma$; extending $\psi$ by the identity on $M \setminus \Sigma$ yields a diffeomorphism $\psi'\colon M \to M$ whose class $[\psi']$ in $\mathrm{Mod}(M)$ defines $\eta([\phi])$.

The following holds:

\begin{lemma}
    \label{lemma:diff}
    If the induced homomorphism $\eta\colon \mathrm{Mod}(\Sigma)\rightarrow\mathrm{Mod}(M)$ is injective, then the isometry $\phi\colon \Sigma \to \Sigma$ in Theorem \ref{theorem:transfer} is in $\mathrm{Diff}_0(\Sigma,\partial \Sigma)$.
\end{lemma}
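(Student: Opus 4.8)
The plan is to reduce the statement to a computation in mapping class groups. Write $\Phi\colon M\to M$ for the diffeomorphism produced in the proof of Theorem \ref{theorem:transfer} from marked length spectrum rigidity on $M$, so that the isometry in the statement of the lemma is $\phi:=\Phi|_\Sigma$. First I would assemble the properties established in that proof: $\Phi$ is isotopic to $\mathrm{id}_M$, one has $\Phi=\mathrm{id}$ on $M\setminus\Sigma^\circ$ and $\Phi(\Sigma)=\Sigma$, and $\phi$ is a smooth isometry of $\Sigma$ fixing $\partial\Sigma$ pointwise. Since $\Phi$ equals the identity on $M\setminus\Sigma^\circ$ and $\phi$ is an isometry between two metrics whose $C^\infty$-jets agree along $\partial\Sigma$ (this agreement being a necessary condition for consistent extendability, as noted in \S\ref{ssection:transfer}), the $C^\infty$-jet of $\phi$ along $\partial\Sigma$ equals that of $\mathrm{id}_\Sigma$; in particular $\phi\in\mathrm{Diff}(\Sigma,\partial\Sigma)$, and $\Phi$ is a genuine smooth diffeomorphism of $M$ with $[\Phi]=1$ in $\mathrm{Mod}(M)$ (being isotopic to the identity). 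The whole proof then amounts to checking that $\eta([\phi])=[\Phi]$ in $\mathrm{Mod}(M)$: granting this, injectivity of $\eta$ forces $[\phi]=1$ in $\mathrm{Mod}(\Sigma)$, that is $\phi\in\mathrm{Diff}_0(\Sigma,\partial\Sigma)$, which is the assertion.

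To prove $\eta([\phi])=[\Phi]$, I would unwind the definition of $\eta$. By the standard (jet-refined) deformation retraction of $\mathrm{Diff}(\Sigma,\partial\Sigma)$ onto the subgroup of diffeomorphisms that are the identity on a collar of $\partial\Sigma$, and using that $\phi$ has trivial $C^\infty$-jet along $\partial\Sigma$, there is an isotopy $(\Theta_t)_{t\in[0,1]}$ in $\mathrm{Diff}(\Sigma,\partial\Sigma)$ with $\Theta_0=\mathrm{id}_\Sigma$, with each $\Theta_t$ having trivial $C^\infty$-jet along $\partial\Sigma$, and such that $\psi:=\Theta_1\circ\phi$ equals the identity on a collar of $\partial\Sigma$. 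Then $t\mapsto\Theta_t^{-1}\circ\psi$ is an isotopy rel $\partial\Sigma$ from $\psi$ to $\phi$, so $\psi$ is an admissible representative in the recipe defining $\eta$, and $\eta([\phi])=[\psi']$, where $\psi'$ is the extension of $\psi$ by the identity on $M\setminus\Sigma$. Now extend everything by the identity outside $\Sigma$: since every $\Theta_t$ is the identity to infinite order along $\partial\Sigma$, the extensions $\widehat\Theta_t$ form a smooth isotopy of $M$ with $\widehat\Theta_0=\mathrm{id}_M$, so $\widehat\Theta_1\in\mathrm{Diff}_0(M)$; and $\psi'=\widehat\Theta_1\circ\Phi$, since both sides restrict to $\Theta_1\circ\phi$ on $\Sigma$ (using $\Phi(\Sigma)=\Sigma$) and to the identity on $M\setminus\Sigma$. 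Hence $\eta([\phi])=[\psi']=[\widehat\Theta_1\circ\Phi]=[\Phi]=1$, which completes the argument.

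The step I expect to require the most care is the ``jet-aware collar straightening'' producing the isotopy $(\Theta_t)$: one must conjugate $\phi$ to a map which is the identity on a collar of $\partial\Sigma$ while keeping every intermediate diffeomorphism equal to the identity to infinite order along $\partial\Sigma$, so that the whole isotopy extends smoothly by the identity to $M$. This is the smooth-category fact that restriction to a collar realizes $\mathrm{Diff}(\Sigma,\partial\Sigma)$ as the total space of a fibration with contractible base (uniqueness of collars) and fiber the diffeomorphisms that are the identity near $\partial\Sigma$, refined so that, since $\phi$ has trivial $\infty$-jet along $\partial\Sigma$, the contracting path of collars and the associated isotopy-extension vector field can be chosen to vanish to infinite order along $\partial\Sigma$. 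Everything else is formal bookkeeping inside the mapping class groups, relying only on the well-definedness of the homomorphism $\eta$ recalled in the statement of the lemma.
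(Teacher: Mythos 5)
Your proof is correct and follows essentially the same route as the paper, which simply observes that $\phi$ is the restriction of a diffeomorphism $\Phi$ of $M$ isotopic to the identity and therefore $[\phi]\in\ker\eta$. You supply the bookkeeping the paper leaves implicit (in particular, using the trivial $C^\infty$-jet of $\phi$ along $\partial\Sigma$ to identify $\eta([\phi])$ with $[\Phi]$), which is the right way to make "hence in the kernel of $\eta$" precise.
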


\begin{proof}
    The proof is straightforward since the isometry on $\Sigma$ in Theorem \ref{theorem:main} is the restriction to $\Sigma$ of an isometry on $M$ isotopic to the identity (hence in the kernel of $\eta$).
\end{proof}

We emphasize that in the boundary rigidity problem, even on simple manifolds, the isometry $\phi\colon \Sigma \to \Sigma$ is \emph{not} necessarily an element of $\mathrm{Diff}_0(\Sigma,\partial \Sigma)$. Indeed, it is well-known that there exists on $\Sigma := D^6$, the $6$-dimensional closed ball in $\R^6$, a diffeomorphism $\phi\colon \Sigma \to \Sigma$ fixing the boundary, but not isotopic to the identity via an isotopy preserving the boundary. Hence, taking any simple metric $g$ on $\Sigma$, it is straightforward to check that $d_g = d_{\phi^*g}$ but the two metrics $g$ and $\phi^*g$ do not differ by an element of $\mathrm{Diff}_0(\Sigma,\partial \Sigma)$. However, in dimension $2$ and $3$, it is known that $\mathrm{Mod}(D^2)=\mathrm{Mod}(D^3)=\left\{0\right\}$, see \cite{Hatcher-83}.

\subsection{Proof of the main theorem}

Using Theorem \ref{theorem:transfer}, the proof of Theorem \ref{theorem:main} now boils down to proving that metrics on open surfaces of Anosov type with same marked boundary distance function are consistently extendable.

\begin{proposition}
\label{proposition:extendability}
Let $\Sigma$ be a smooth compact connected oriented surface with boundary. Let $g_1$ and $g_2$ be two metrics of Anosov type on $\Sigma$ and further assume that $g_1$ and $g_2$ have same marked boundary distance function, i.e. $d_{g_1} = d_{g_2}$. Then there exists a diffeomorphism $\phi \in \mathrm{Diffeo}_0(\Sigma,\partial \Sigma)$ such that $\phi^*g_1$ and $g_2$ are consistently extendable to a closed surface $M$ with genus $\geq 2$ and the induced homomorphism $\eta : \mathrm{Mod}(\Sigma) \to \mathrm{Mod}(M)$ in \eqref{equation:eta} is injective.
\end{proposition}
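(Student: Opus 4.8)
The plan is to build the extension in two stages. First, by Lemma \ref{lemma:jets} we may replace $g_1$ by $\phi^*g_1$ for a suitable $\phi \in \mathrm{Diff}_0(\Sigma, \partial\Sigma)$, so that from now on $g_1$ and $g_2$ have the same $C^\infty$-jets along $\partial\Sigma$; by Lemma \ref{lemma:lens}, after this adjustment the lifted metrics to the universal cover have identical lens data in every homotopy class, in the sense of \eqref{equation:lens-homotopy}. The goal is then to glue a fixed ``cap'' onto $\Sigma$ to obtain a closed surface $M$ of genus $\geq 2$, carrying an Anosov (in fact negatively curved) metric $g'$ that restricts to $g_1=g_2$ on $M\setminus\Sigma$, and to do this simultaneously for both metrics. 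For the core construction I would invoke the embedding theorem of Chen--Erchenko--Gogolev \cite{Chen-Erchenko-Gogolev-20}: a compact oriented surface of Anosov type isometrically embeds in a closed Anosov (negatively curved) surface. The key point is that this construction depends only on the $C^\infty$-jet of the metric at $\partial\Sigma$ together with the convexity data — so applying it to $g_1$ produces a closed surface $M \supset \Sigma$ with a negatively curved extension $g_1'$, and because $g_2$ agrees with $g_1$ to infinite order on $\partial\Sigma$, \emph{the same} cap metric (defined on $M\setminus\Sigma^\circ$) glues smoothly to $g_2$, yielding $g_2'$ with $g_1'=g_2'$ on $M\setminus\Sigma$. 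One must check that $g_2'$ is again Anosov: this follows because $g_2$ has no conjugate points and hyperbolic trapped set, the cap is negatively curved, and the gluing is along a strictly convex boundary, so the standard argument that no geodesic can be trapped in the positively-curved-free region and that the hyperbolic trapped set persists goes through exactly as for $g_1'$. Thus $g_1$ (after the initial isotopy) and $g_2$ are consistently extendable to $M$.

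Next I would arrange $M$ to have genus $\geq 2$ and, crucially, that the induced map $\eta : \mathrm{Mod}(\Sigma) \to \mathrm{Mod}(M)$ is injective. If the cap from \cite{Chen-Erchenko-Gogolev-20} does not already give genus $\geq 2$, one enlarges it by connect-summing handles inside the cap region (away from $\partial\Sigma$), extending the negatively curved metric accordingly; this does not affect the part of the metric on $\Sigma$ or near $\partial\Sigma$. For the injectivity of $\eta$, the relevant input is a theorem of Paris--Rolfsen (or the analogous statement via the Birman exact sequence / change-of-coordinates principle) on embeddings of subsurfaces: if $\Sigma \hookrightarrow M$ is a $\pi_1$-injective subsurface of a closed orientable surface such that no boundary component of $\Sigma$ bounds a disk or is isotopic in $M$ to another boundary component, then the induced map on mapping class groups (fixing $\partial\Sigma$) is injective. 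Since every metric of Anosov type has strictly convex boundary and hyperbolic trapped set, $\partial\Sigma$ is incompressible — indeed $\Sigma$ deformation retracts onto its trapped set and $\pi_1(\Sigma)\hookrightarrow\pi_1(M)$ because $M$ is negatively curved and $\Sigma$ is $\pi_1$-injectively embedded (its inclusion is an isometric embedding of a convex-boundaried piece). So one must simply ensure, when building the cap, that each boundary circle of $\Sigma$ is non-separating-compatible and no two are parallel in $M$; this can always be achieved by choosing the cap's topology appropriately (e.g. attaching the boundary circles to distinct handles).

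The main obstacle I expect is the \emph{simultaneous} and \emph{topologically controlled} extension: one needs the cap construction of \cite{Chen-Erchenko-Gogolev-20} to be localizable near $\partial\Sigma$ (so that it genuinely depends only on the jet of the metric there and produces the \emph{same} metric on $M\setminus\Sigma$ for both $g_1$ and $g_2$), and one needs enough freedom in the topological type of the cap to both reach genus $\geq 2$ and force $\eta$ to be injective, all while keeping the glued metric negatively curved (hence Anosov). Verifying that the perturbations needed to make the cap negatively curved can be kept supported away from a collar of $\partial\Sigma$ — so they do not spoil the infinite-order matching of $g_1$ and $g_2$ — is the delicate technical point; it relies on the fact that negative curvature near a strictly convex boundary is an open condition and the jets to be matched are fixed in advance. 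Once this is in place, combining the extension with Lemma \ref{lemma:diff} gives exactly the statement.
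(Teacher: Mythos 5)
Your proposal follows the paper's approach: align jets via Lemma \ref{lemma:jets}, invoke \cite{Chen-Erchenko-Gogolev-20} to build a cap determined by the boundary jets, observe that the same cap glues smoothly onto $g_2$, and ensure injectivity of $\eta$ by controlling the topology of the complementary regions. The overall strategy is correct, and you correctly identify that the construction depends only on the $C^\infty$-jet of the metric on $\partial \Sigma$, which is the key observation.

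A few inaccuracies deserve attention. First, the CEG cap is \emph{not} entirely negatively curved: there is a transitional collar (region (B) of Figure~\ref{figure: CEG}) where the curvature is only being deformed toward negativity, and a region where the original boundary jet (possibly with positive convexity data) is still felt. The Anosov property of the extended metric does not come simply from ``negative cap + convex boundary + no conjugate points''; it comes from a specific choice of the parameters $(\delta_0,\eps,\ell,\delta,R)$ in the CEG family of extensions. This is the crux: the paper emphasizes that these parameters must be chosen \emph{depending on both} $g_1$ \emph{and} $g_2$ (in particular, $R$ must be large enough for both), so that $g_1'$ and $g_2'$ are simultaneously Anosov. ``Same cap, standard argument goes through identically'' glosses over exactly where the work lies. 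Second, the assertion that ``$\Sigma$ deformation retracts onto its trapped set'' is false — the trapped set is a fractal (hyperbolic basic set), and for simple disks it is even empty; this is not how $\pi_1$-injectivity or incompressibility is obtained. The injectivity of $\eta$ is instead deduced from the subsurface-embedding criterion (Farb--Margalit \cite[Theorem 3.18]{Farb-Margalit-12}, equivalently Paris--Rolfsen) once one arranges that no connected component of $M\setminus \Sigma^\circ$ is an open disk or open annulus, which the paper achieves within the CEG gluing step. Third, adding handles to raise the genus is unnecessary, since the CEG construction already produces a closed surface of genus $\geq 2$. Finally, you misplace the ``delicate point'': the jet matching of the cap with $g_2$ is automatic because the cap is built entirely on $M\setminus\Sigma^\circ$ and only sees $\Sigma$ through its boundary jets; the genuine difficulty is the simultaneous Anosov verification for $g_2'$, which requires the parameter choices described above.
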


\begin{proof}
By Lemma \ref{lemma:jets}, we can find a first diffeomorphism $\phi \in \mathrm{Diffeo}_0(\Sigma,\partial \Sigma)$ such that $\phi^*g_1$ and $g_2$ have same $C^\infty$-jets on $\partial \Sigma$. By \cite[Theorem A]{Chen-Erchenko-Gogolev-20}, there exists a closed oriented Anosov surface $(M,g_1')$ (hence of genus $\geq 2$) and an isometric embedding $\iota\colon (\Sigma, \phi^*g_1) \to (M,g_1')$. We identify $\Sigma$ with its image $\iota(\Sigma)$. The metric $g_2'$ defined by $g_2' = g_2$ on $\Sigma$ and $g_2' = g_1'$ on $M \setminus \Sigma$ is smooth since $\phi^*g_1$ and $g_2$ have same $C^\infty$-jets on $\partial \Sigma$. However, it is not clear that the second metric $g_2'$ is Anosov on $M$. Our aim is to review the construction of \cite{Chen-Erchenko-Gogolev-20} to show that one can actually guarantee that both metrics $g_1',g_2'$ are Anosov on $M$.

The construction of \cite{Chen-Erchenko-Gogolev-20} actually produces a family of such isometric embeddings $\iota \colon (\Sigma, \phi^*g_1) \to (M,g_1')$ depending on various parameters $(\delta_0,\eps,\ell,\delta,R)$ listed below. We claim that among this family there exists one $\iota$ such that $g_2'$ is also an Anosov metric on $M$. Indeed, the isometric embedding in \cite[Theorem A]{Chen-Erchenko-Gogolev-20} is constructed in several steps which we now review (see also \cite[Section 1.1]{Chen-Erchenko-Gogolev-20} for an outline of the construction). \\

(i) In the first step, we just construct a slightly larger extension $(\Sigma^{\delta_0},g^{\delta_0}_1)$ so that the original properties of metric $g_1$ are preserved. The reason for that is that the extension of $g_1$ to the closed manifold $M$ is firstly only $C^{1,1}$. Thus, we will need to apply a smoothing procedure but we do not want to perturb the metrics on $\Sigma$. By \cite[Lemma 2.3]{Guillarmou-17-2}, for any sufficiently small $\delta_0>0$, there exists a compact surface $(\Sigma^{\delta_0},g^{\delta_0}_1)$ with boundary and an isometric embedding $\iota_1\colon (\Sigma,g_1)\rightarrow (\Sigma^{\delta_0},g^{\delta_0}_1)$ where $(\Sigma^{\delta_0},g^{\delta_0}_1)$ has strictly convex boundary equidistant to $\partial \Sigma$ at distance $\delta_0$ with respect to the metric $g_1^{\delta_0}$, has the same hyperbolic trapped set as $(\Sigma,g_1)$, and no conjugate points. Moreover, all equidistant curves to $\partial \Sigma$ in $\Sigma^{\delta_0}\setminus \Sigma$ are strictly convex. We identify $\Sigma$ with its image $\iota_1(\Sigma)$. The parameter $\delta_0$ is chosen small enough so that the curvature of all equidistant curves to $\partial\Sigma$ in $\Sigma^{\delta_0}\setminus\Sigma$ is at least $\kappa_{\min}(\partial\Sigma)/2$ where $\kappa_{\min}(\partial\Sigma)$ is the minimal curvature of $\partial\Sigma$. Notice that this embedding depends  only on $C^\infty$-jets of metrics on $\partial\Sigma$. \\

(ii) Let $\partial\Sigma^{\delta_0}=\sqcup_{j=1}^m S_j$ where each $S_j$ is diffeomorphic to a circle. We parametrize each boundary component so that the tangent vector and the outward normal vector form an oriented basis. 
 In the second step, for each $S_j$,  we find a metric on a collar that connects the metric on this boundary to a constant curvature metric. The resulting metric is $C^\infty$ everywhere except on $\partial\Sigma^{\delta_0}$ and an equidistant curve to it. Then, we smoothen the metric so that the metric on $\Sigma$ is untouched.

By \cite[Proposition 7.1]{Chen-Erchenko-Gogolev-20}, for any $\eps<\delta_0$, there exists a compact surface $(\Sigma^*, g^*)$ with boundary and an isometric embedding $\iota_2\colon (\Sigma^{\delta_0}, g_1^{\delta_0})\rightarrow (\Sigma^*,g^*)$ where $(\Sigma^*\setminus \Sigma^\circ,g^*)$ is isometric to $[-\delta_0,4]\times \sqcup_{j=1}^mS_j$ with metric $\tilde{g}_j=dt^2+(\tilde{g_j})_t$ on $[-\delta_0,4]\times S_j$ where $(\tilde{g_j})_t$ is the metric on $(S_j)_t:=\{t\}\times S_j$ and $(\tilde{g_j})_t$ is the smoothing of the $C^{1,1}$ Riemannian metric $(g_j)_t$ on $(S_j)_t$ defined in \eqref{equation:metric} below (see Figure~\ref{figure: CEG} and \cite[Proposition 7.2]{Chen-Erchenko-Gogolev-20}). Note that for any $\theta\in S_j$, we have that $t \mapsto \gamma(t):=(t,\theta)$ (for $t\in[-\delta_0,4]$) is a geodesic on $[-\delta_0,4]\times S_j$ for the metric $g^*$. Also, $\{0\}\times\sqcup_{j=1}^mS_j$ corresponds to $\partial\Sigma^{\delta_0}$, and $\{-\delta_0\}\times\sqcup_{j=1}^mS_j$ corresponds to $\partial \Sigma$.

\begin{figure}[H]
    \tiny
      \centering
			\begin{tikzpicture}[scale=0.65]
            \draw[thick,blue] (0,0) arc (-90:90:0.3 and 1.5);
			\draw [dashed,blue] (0,0) arc (270:90:0.3 and 1.5);
            \draw[thick,red] (-7,-0.85) arc (-90:90:0.5 and 2.35);
			\draw [dashed,red] (-7,-0.85) arc (270:90:0.4 and 2.35);
            \node [red] at (-6.9,1.8) {$\partial\Sigma^{\delta_0}$};

           \draw[thick] (-6,-0.75) arc (-90:90:0.5 and 2.25);
			\draw [dashed] (-6,-0.75) arc (270:90:0.4 and 2.25);

            \draw[thick] (-4,-0.5) arc (-90:90:0.5 and 2);
			\draw [dashed] (-4,-0.5) arc (270:90:0.4 and 2);

            \draw[thick] (-2,-0.25) arc (-90:90:0.5 and 1.75);
			\draw [dashed] (-2,-0.25) arc (270:90:0.4 and 1.75);    
			\draw [thick,purple] (-8,1.5) ellipse (0.5 and 2.5);
            \node[purple] at (-8, 2.5) {$\partial\Sigma$};
			\draw [thick] plot [ smooth, tension = 0.3] coordinates {(-8, -1) (0, 0)}; 
			\draw [thick] plot [smooth, tension = 0.3] coordinates {(-8, 4) (0, 3)};

             \draw [thick, decorate,
    decoration = {calligraphic brace}] (-8,4.2) --  (-7,4.2);
   \node[] at (-7.5,4.6) {\textbf{(A)}};

            \draw [thick, decorate,
    decoration = {calligraphic brace, mirror}] (-7,-1) --  (-6,-1);
            \node[] at (-6.5,-1.4) {\textbf{(B)}};

            \draw [thick, decorate,
    decoration = {calligraphic brace}] (-6,4) --  (-4,4);
        \node[] at (-5,4.4) {\textbf{(C)}};

            \draw [thick, decorate,
    decoration = {calligraphic brace,mirror}] (-4,-0.7) --  (-2,-0.7);
            \node[] at (-3,-1.1) {\textbf{(D)}};

       \draw [thick, decorate,
    decoration = {calligraphic brace}] (-2,3.5) --  (0,3.5);
   \node[] at (-1,3.9) {\textbf{(E)}};
    %        \node[] at (-4.5,4.5) {Negative curvature.};
            \end{tikzpicture}
      \caption{The $C^{1,1}$ metric on $[-\delta_0,4]\times\partial\Sigma^\delta_0$ from \cite{Chen-Erchenko-Gogolev-20}. \textbf{(A)} Width $\delta_0$. \textbf{(B)} Width $\eps$; deformation to negative curvature with controlled upper bound on it. \textbf{(C)} Width $1$; negative curvature. \textbf{(D)} Width $1+\eps$; ``rounding'' the metric, negative curvature. \textbf{(E)} Constant negative curvature.}
      \label{figure: CEG}
\end{figure}
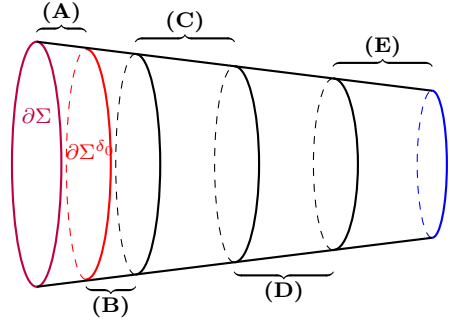

We provide the expression of the metric on the attached collar to show that it only depends on the $C^\infty$ jet of $g_1$ on $\partial\Sigma$. Let $(t,\theta)\in [-\delta_0,4]\times S_j$. Define $\pi_s: \mathbb R\times S_j\to \mathbb R\times S_j$ by $\pi_s(t,\theta):=(t+s,\theta)$ for $\theta\in S_j$. Let $h_j(X,Y) :=2\kappa_{(S_j)_0}(0,\theta)|d\pi_{-t}X|_{g_1^{\delta_0}}|d\pi_{-t}Y|_{g_1^{\delta_0}}$ where $\kappa_{(S_j)_0}$ is the curvature of $(S_j)_0$ and $X,Y\in T_{(t,\theta)}(S_j)_t$. Notice that from the strict convexity condition, we have $\kappa_{(S_j)_0}>0$. Choose a non-increasing $C^\infty$ function $\rho\colon \mathbb R\rightarrow [0,1]$ such that $\rho\equiv 1$ on $(-\infty,0]$ and $\rho\equiv 0$ on $[1,\infty)$. Define a function $f_\ell\colon\mathbb R\rightarrow\mathbb R$ by $f_\ell(t)=\frac{e^{\ell t}-1}{\ell}$ for $t\in\mathbb R$. Let $ds^2$ be the standard metric on $\mathbb R/(2\pi)\mathbb Z$. Then,
\begin{equation}
\label{equation:metric}
(g_j)_t = \left\{\begin{aligned} &g^{\delta_0}_t, \qquad t\in[-\delta_0,0),\\
&\rho(t-\eps)g^{\delta_0}_0+f_{\ell}(t)h_j, \qquad t\in[0,1+\eps],\\
&f_{\ell}(t)\left(\rho(t-1-\eps)h_j+(1-\rho(t-1-\eps)ds^2)\right), \qquad t\in[1+\eps,2+2\eps],\\
&\left(\frac{1}{\kappa}\sinh[\kappa(t+\tilde r)]\right)^2ds^2, \qquad t\in[2+2\eps,4]
\end{aligned}\right.
\end{equation}
where $\kappa$ and $\tilde r>-2-2\eps$ are determined by $\ell$, and $-\kappa^2\rightarrow -\infty$ as $\ell\rightarrow\infty$ by \cite[Lemma C.1]{Chen-Erchenko-Gogolev-20}. The smoothing of $g_j$ is done in the $\delta$-neighborhood (for $\delta\in(0,\frac{\eps}{2})$) of $t=0$ and $t=2+2\eps$. \\

(iii) The final step is to glue the manifold $(\Sigma^*,g^*)$ with boundary into a closed manifold of the same dimension so that the boundary components of $\partial\Sigma^*$ are far enough and a geodesic that leaves $\Sigma^*$ does not return back to it too soon. 

Fix $R>0$. By the previous item, $g^*$ is the hyperbolic metric of constant curvature $-\kappa^2$ on the neighborhood of $\partial\Sigma^*$ and correspond to a metric on the annulus. We can remove $m$ disks from a hyperbolic surface of curvature $-\kappa^2$  at a distance at least $R$ from each other (see \cite[Lemma 8.3]{Chen-Erchenko-Gogolev-20}) and attach $(\Sigma^*,g^*)$. As a result, we obtain an isometric embedding $\iota\colon (\Sigma^*,g^*)\rightarrow (M,g_1')$ where $(M,g_1')$ is a closed oriented surface of genus $\geq 2$. It can be further constructed so that no connected component of $M\setminus \Sigma^*$ is diffeomorphic to an open annulus or an open disk. By \cite[Theorem 3.18]{Farb-Margalit-12}, this guarantees that the induced homomorphism $\eta : \mathrm{Mod}(\Sigma) \to \mathrm{Mod}(M)$ is injective. \\

By choosing $\delta_0$ and $\eps$ sufficiently small, $\ell$ sufficiently large, $\delta$ sufficiently small (depending on the choice of $\eps$ and $\ell$), and $R$ sufficiently large depending on the metrics $g_1$ and $g_2$, we can guarantee that $g_1'$ and $g_2'$ are both Anosov metrics on $M$, see \cite[Section 8]{Chen-Erchenko-Gogolev-20} for further details.

\end{proof}

We can now conclude the proof of Theorem \ref{theorem:main}:

\begin{proof}[Proof of Theorem \ref{theorem:main}]
Straightforward consequence of Proposition \ref{proposition:extendability}, Theorem \ref{theorem:transfer} and the marked length spectrum rigidity of closed Anosov surfaces with $C^4$-regularity proved in \cite{Guillarmou-Lefeuvre-Paternain-23}. The isometry $\phi$ is an element in $\mathrm{Diff}_0(\Sigma,\partial \Sigma)$ by injectivity of $\eta\colon \mathrm{Mod}(\Sigma) \to \mathrm{Mod}(M)$ (Proposition \ref{proposition:extendability} and Lemma \ref{lemma:diff}).
\end{proof}

\bibliographystyle{alpha}
%\nocite{*}
\bibliography{Biblio}

\end{document}